\newcolumntype{P}[1]{>{\centering\arraybackslash}p{#1}}
\DeclareMathAlphabet{\mathcal}{OMS}{cmsy}{m}{n}
\newtheorem{theorem}{Theorem}[section]
\newtheorem{proposition}[theorem]{Proposition}
\newcommand{\ifcomment}{\iffalse}
\newcommand{\bs}[1]{\boldsymbol{#1}}
\newdefinition{rem}{Remark}
\begin{document}

\begin{frontmatter}

\title{Preconditioning and Reduced-Order Modeling of Navier-Stokes Equations in Complex Porous Microstructures}

\author[psu]{Kangan Li}
\ead{kbl5610@psu.edu}
\author[psu]{Yashar Mehmani}
\ead{yzm5192@psu.edu}
\address[psu]{Energy and Mineral Engineering Department, The Pennsylvania State University, University Park, Pennsylvania 16802}
\cortext[ca]{Corresponding author: Yashar Mehmani. Email: yzm5192@psu.edu}

\begin{abstract}
We aim to solve the incompressible Navier-Stokes equations within the complex microstructure of a porous material. Discretizing the equations on a fine grid using a staggered (e.g., marker-and-cell, mixed FEM) scheme results in a nonlinear residual. Adopting the Newton method, a linear system must be solved at each iteration, which is large, ill-conditioned, and has a saddle-point structure. This demands an iterative (e.g., Krylov) solver, that requires preconditioning to ensure rapid convergence. We propose two monolithic \textit{algebraic} preconditioners, $a\mathrm{PLMM_{NS}}$ and $a\mathrm{PNM_{NS}}$, that are generalizations of previously proposed forms by the authors for the Stokes equations ($a\mathrm{PLMM_{S}}$ and $a\mathrm{PNM_{S}}$). The former is based on the pore-level multiscale method (PLMM) and the latter on the pore network model (PNM), both successful approximate solvers. We also formulate faster-converging geometric preconditioners $g$PLMM and $g$PNM, which impose $\partial_n\bs{u}\!=\!0$ (zero normal-gradient of velocity) exactly at subdomain interfaces. Finally, we propose an accurate coarse-scale solver for the steady-state Navier-Stokes equations based on $g$PLMM, capable of computing approximate solutions orders of magnitude faster. We benchmark our preconditioners against state-of-the-art block preconditioners and show $g$PLMM is the best-performing one, followed closely by $a\mathrm{PLMM_{S}}$ for steady-state flow and $a\mathrm{PLMM_{NS}}$ for transient flow. All preconditioners can be built and applied on parallel machines.
\end{abstract}

\begin{keyword}
Navier-Stokes equations, Porous media, Multiscale method, Preconditioning, Krylov solver
\end{keyword}

\end{frontmatter}

\section{Introduction} \label{sec:intro}
Porous media span a wide range of natural (e.g., soil, rock) and engineered (e.g., foamed metal, ceramic) materials with extreme microstructural complexity \cite{bear2013dynamics, liu2014porous}. High-resolution X-ray $\mu$CT imaging \cite{wildenschild2013x} is often used to capture this complexity, for subsequent use in pore-scale modeling. Here, we concern ourselves with the Navier-Stokes equations:
\begin{subequations} \label{eq:governing_eqs}
\begin{align}
\rho (\partial_t\bs{u} + \bs{u} \cdot \nabla \bs{u}) - \mu\,\Delta\bs{u} + \nabla p &= \rho \bs{b} \label{eq:navstokes_mom} \\
\nabla\cdot\bs{u} &= 0 \label{eq:navstokes_mass}
\end{align}
\end{subequations}
describing the flow of a single-phase, incompressible, and Newtonian fluid through the void space $\Omega$ of a porous sample. Eqs.\ref{eq:navstokes_mom} and \ref{eq:navstokes_mass} denote momentum and mass conservation, respectively. Variables $\bs{u}$, $p$, $\rho$, $\mu$, and $\bs{b}$ represent velocity, pressure, density, viscosity, and body force per unit mass, respectively. The symbols $\nabla$, $\Delta$, and $\nabla \cdot$ are the gradient, Laplacian, and divergence operators. Direct numerical simulation (DNS) methods discretize Eq.\ref{eq:governing_eqs} on a fine grid over $\Omega$. Popular choices are staggered finite volume (FVM) \cite{perot2000stagFVM}, marker and cell (MAC) finite difference \cite{harlow1965MAC}, and mixed finite element (MFEM) \cite{bochev2006MFEMstab, loghin2002MpFpA}, due to their flux conservation properties essential to subsequent solute transport and two-phase flow simulations \cite{mehmani2019mult}. High-fidelity solutions obtained from these methods help characterize geologic sites for CO\textsubscript{2} sequestration \cite{bachu2008CO2}, underground H\textsubscript{2} storage \cite{hanson2022H2DOEreport}, and geothermal energy extraction \cite{barbier2002geothermal}, and design new porous components for fuel cells \cite{andersson2016fuelcell}, electrolyzers \cite{lee2020electrolyzer}, and heat exchangers and thermal insulators \cite{clyne2006heatexchange}.

Algebraic systems arising from the above DNS methods are nonlinear, due to the inertia term $\bs{u} \cdot \nabla \bs{u}$, and are often solved with Newton's method. Linearization leads to the following saddle-point structure for the Newton update:
\begin{equation} \label{eq:Ax=b} 
	J^k \delta^k
	\,
	=
	-r^k
	\qquad
	\Rightarrow
	\qquad
	\begin{bmatrix}
	 \mathrm{F}^k & \mathrm{G}^\top \\
	 \mathrm{G} & \mathrm{O}
	\end{bmatrix}
	\,
	\begin{bmatrix}
	 \delta_u^k \\
	 \delta_p^k
	\end{bmatrix}
	\,
	=
	\,
	-
	\begin{bmatrix}
	 r_u^{k} \\
	 r_p^{k}
	\end{bmatrix}
	\qquad
	\Rightarrow
	\qquad
	\hat{\mathrm{A}}\hat{x}=\hat{b}
\end{equation}
where $r^k$ is the residual vector at iteration $k$ and $J^k$ is the corresponding Jacobian. The Newton update $\delta^k$ consists of a velocity $\delta_u^k$ and a pressure $\delta_p^k$ component. The next iterate is obtained from $\bs{u}^{k+1}\! =\! \bs{u}^{k} + \delta_u^k$ and $p^{k+1}\! =\! p^{k} + \delta_p^k$. Residual vectors $\smash{r_u^k}$ and $\smash{r_p^k}$ correspond to discretized momentum and mass balance in Eq.\ref{eq:governing_eqs}, respectively. For simplicity, we denote the linearized system by $\mathrm{\hat{A}}\hat{x}\!=\!\hat{b}$, which is often very large. Solving it requires iterative (e.g., Krylov) solvers, that depend on effective preconditioning to converge rapidly \cite{saad2003book}. Our main goal is to develop such preconditioners.

The most common preconditioners are block-triangular \cite{benzi2005saddle} of the following form (dropping the index $k$):
\begin{equation} \label{eq:MB}
\mathrm{M_B} = 
     \begin{bmatrix}
     \mathrm{F} & \mathrm{G}^\top \\
     \mathrm{O} & \mathrm{X}
     \end{bmatrix}
\qquad\Rightarrow\qquad
\mathrm{M_B^{-1}} = 
     \begin{bmatrix}
     \mathrm{F^{-1}} & -\mathrm{F^{-1}}\mathrm{G}^\top\mathrm{X^{-1}} \\
     \mathrm{O} & \mathrm{X^{-1}}
     \end{bmatrix}
\qquad\Rightarrow\qquad
\mathrm{\hat{A}}\mathrm{M_B^{-1}}\hat{x} = \hat{b}
\end{equation}
applied here as a right-preconditioner, $\mathrm{M_B}$. If $\mathrm{X}$ equals the Schur complement matrix $-\mathrm{GF^{-1}}\mathrm{G}^\top$, convergence is in one step. Note $\mathrm{G}$ is the discrete divergence operator, and its transpose the discrete gradient. The idea behind all block preconditioners is to \textit{approximate} $\mathrm{X}^{-1}$, or more precisely its action on a given vector, because direct computation is equivalent to solving the system. Thus, left-multiplying a vector by $\mathrm{M_B^{-1}}$ requires solving two smaller systems, one involving the coefficient matrix $\mathrm{F}$ and another involving $\mathrm{X}$. Since these are elliptic, algebraic multigrid (AMG) \cite{ruge1987algebraic,notay2010agmg} is a preferred choice \cite{silvester2001StokesVcycle}. Approximate candidates for $\mathrm{X^{-1}}$ abound in the literature \cite{elman1996Grammian, elman1999BFBt, loghin2002MpFpA, kay2002GreenTensor, benzi2005saddle, elman2006BFBtScaled}, among which the \textit{scaled}-BFBt approximation of \cite{elman2006BFBtScaled} was found to exhibit robust performance in complex geometries \cite{mehmani2025multiscale}:
\begin{equation}
\label{eq:BFBt_scaled}
\mathrm{X^{-1}} = - (\mathrm{G}\mathrm{D_F^{-1}}\mathrm{G}^\top)^{-1}
                    \mathrm{G}\mathrm{D_F^{-1}}\mathrm{F}\mathrm{D_F^{-1}}\mathrm{G}^\top
                    (\mathrm{G}\mathrm{D_F^{-1}}\mathrm{G}^\top)^{-1}
\end{equation}
where $\mathrm{D_F}$ is the diagonal matrix comprised of the diagonal entries of $\mathrm{F}$.

Unfortunately, using AMG to solve block systems associated with $\mathrm{F}$ and $\mathrm{X}$, henceforth referred to as $b$AMG, was found by the authors \cite{mehmani2025multiscale} to result in GMRES stagnation for Stokes flow through complex porous geometries. The prefix $b$ here stands for ``block'' preconditioner. A far more robust alternative to AMG was proposed by \cite{mehmani2025multiscale} based on the pore-level multiscale method (PLMM) \cite{mehmani2018mult} and the pore-network model \cite{fatt1956network}, equipped with better prolongation/restriction matrices. We call these $b$PLMM and $b$PNM, which
exhibit faster convergence and no stagnation. Even with these improvements, block-preconditioners remain prohibitively slow for practical pore-scale simulation \cite{mehmani2025multiscale}. 

To address this gap, monolithic preconditioners based on PLMM and PNM were formulated in \cite{mehmani2025multiscale} for the Stokes equations, which are direct algebraic translations of the geometric models these algorithms represent \cite{mehmani2018mult,fatt1956network}. We denote them by $a\mathrm{PLMM_S}$ and $a\mathrm{PNM_S}$, where subscript ``S'' emphasizes the preconditioners are built from the discrete Stokes system. The term ``monolithic'' implies that velocity and pressure unknowns are solved simultaneously, without splitting the system into blocks. Both $a\mathrm{PLMM_S}$ and $a\mathrm{PNM_S}$ are two-level, consisting of a coarse preconditioner $\mathrm{M_G}$ and a smoother $\mathrm{M_L}$. The former encapsulates the physical insights behind the original PLMM and PNM, and the latter is an additive-Schwarz smoother to wipe out high-frequency errors. A key feature of $\mathrm{M_G}$ is that it doubles as an approximate Stokes solver, where $\hat{x}_{aprx}\!=\!\mathrm{M_G^{-1}}\hat{b}$ is usable in many applications. $L_2$-errors with the $\mathrm{M_G}$ of $a\mathrm{PLMM_S}$ are $<\!2$\% for velocity and $<\!1$\% for pressure, and with $a\mathrm{PNM_S}$ they are $<\!6$\% for velocity and $<\!5$\% for pressure \cite{mehmani2025multiscale}.

Our first goal here is to extend $a\mathrm{PLMM_S}$ and $a\mathrm{PNM_S}$ to the Navier-Stokes equations (Eq.\ref{eq:governing_eqs}), referring to the resulting preconditioners as $a\mathrm{PLMM_{NS}}$ and $a\mathrm{PNM_{NS}}$. In PLMM and PNM, $\Omega$ is decomposed into subdomains called \textit{primary} and \textit{dual grids}. In PLMM, primary grids coincide with geometric enlargements of $\Omega$ (or pores), and dual grids (covering primary-grid interfaces) coincide with geometric constrictions (or throats). In PNM, the reverse is true: primary grids are throats and dual grids (and interfaces) are pores. Decompositions are achieved via the watershed segmentation algorithm \cite{beucher1979water}. Shape and correction vectors are built on each primary grid by solving local systems subject to closure boundary conditions (BCs) at interfaces between primary grids. These BCs are $p\!=\!\mathrm{const}$ and $\partial_n\bs{u}\!=\!\bs{0}$, with $\partial_n$ denoting differentiation normal to the interface. For Stokes flow, the BCs are known to be very accurate for PLMM \cite{mehmani2018mult}, due to converging-diverging flow patterns near throats. In constructing $a\mathrm{PLMM_S}$ and $a\mathrm{PNM_S}$ for Stokes equations, closure BCs are easily imposed via purely algebraic operations \cite{mehmani2025multiscale}. We show this is not the case for the Navier-Stokes equations, because $\partial_n\bs{u}\!=\!\bs{0}$ can only be imposed approximately in building $a\mathrm{PLMM_{NS}}$ and $a\mathrm{PNM_{NS}}$. 

To impose $\partial_n\bs{u}\!=\!\bs{0}$ exactly in a preconditioner, we show the residual $r^k$ in Eq.\ref{eq:Ax=b} must be modified by removing the inertia term $\bs{u}\cdot\nabla\bs{u}$ in a small neighborhood of the primary-grid interfaces. This yields the geometric (not algebraic) preconditioners $g$PLMM and $g$PNM, where access to the code assembling $r^k$ is required. Despite this need for mildly intrusive code access, we find $g$PLMM and $g$PNM outperform $a\mathrm{PLMM_{NS}}$ and $a\mathrm{PNM_{NS}}$. We test all preconditioners ($a\mathrm{PLMM_{NS}}$, $a\mathrm{PNM_{NS}}$, $g$PLMM, $g$PNM) in GMRES for a range of 2D/3D porous geometries and benchmark them against state-of-the-art block ($b$AMG, $b$PLMM, $b$PNM) and Stokes-based ($a\mathrm{PLMM_S}$, and $a\mathrm{PNM_S}$) preconditioners. In all cases, $g$PLMM emerges as the best-performing preconditioner. If algebraic preconditioning is desired, $a\mathrm{PLMM_S}$ and $a\mathrm{PLMM_{NS}}$ are close seconds for the steady-state and unsteady-state Navier-Stokes equations, respectively.

Other monolithic preconditioners for Navier-Stokes equations exist, notably the generalized Dryja-Smith-Widlund (GDSW) \cite{heinlein2020reduced, heinlein2025NavStokes}, but place no emphasis on the importance of decomposition (by watershed) and closure BCs. We show, by comparing $a\mathrm{PLMM_{NS}}$ and $a\mathrm{PNM_{NS}}$ for example, that these choices play outsize roles in Krylov convergence. A further demonstration of this fact was made in \cite{khan2025hPLMM} for simpler elliptic equations arising from linear elasticity.

Our second contribution is a coarse-scale solver for the steady-state Navier-Stokes equations based on $g$PLMM. The algorithm yields an approximate solution iteratively, unlike the single-step $\hat{x}_{aprx}\!=\!\mathrm{M_G^{-1}}\hat{b}$ for Stokes flow. Iterations consist of Newton updates on coarse (pressure) unknowns only, with \textit{no} recourse to the fine grid. The coarse solution is mapped onto the fine grid using a prolongation operator, which is not feasible with competing reduced-order models based on PNM \cite{thauvin1998PNM, balhoff2009PNM, veyskarami2018PNM}. Computations are an order of magnitude faster than required for the exact solution.

The paper's outline follows: Section \ref{sec:problem} describes the problem we aim to solve and the fine-grid solver employed. Section \ref{sec:multi_precon} details the formulations of the algebraic preconditioners $a\mathrm{PLMM_{NS}}$ and $a\mathrm{PNM_{NS}}$. Section \ref{sec:mono_geom} formulates the geometric preconditioners $g$PLMM and $g$PNM. The coarse solver for steady-state Navier Stokes is detailed in Section \ref{sec:coarse_solver}. In Section \ref{sec:problem_set}, we describe the set of 2D/3D domains considered for testing the preconditioners and the coarse solver. Results are presented in Sections \ref{sec:test_coarse}--\ref{sec:test_prec} and discussed in Section \ref{sec:discussion}. The paper concludes with takeaways in Section \ref{sec:conclusion}.

\section{Problem and fine-grid solver} \label{sec:problem}

We aim to solve Eq.\ref{eq:governing_eqs} on the complex void space $\Omega$ of a porous sample (white region in Fig.\ref{fig:main_sketch}a), which is often captured by a pore-scale image (e.g., X-ray $\mu$CT). Eq.\ref{eq:governing_eqs} requires an initial condition (IC) $\bs{u}(t = 0)\! =\! \bs{u}_{0}$ and several boundary conditions (BCs). We impose no-slip ($\bs{u}\!=\!\bs{0}$ and $\partial p_n\!=\!0$) on the fluid-solid interface $\Gamma_w$ (white-black boundary in Fig.\ref{fig:main_sketch}a) and inlet/outlet BCs (constant pressure and $\partial_n\bs{u}\!=\!0$) on the left/right external boundaries of $\Omega$, respectively. The latter results in flow from left to right. Here, $\partial_n$ denotes differentiation in the direction normal to the corresponding boundary. At moderate to low Reynolds numbers $Re\!=\!\rho U l/\mu$ (where $U$ and $l$ are characteristic velocity and length), the equations admit a steady-state laminar solution without flow instabilities, allowing one to drop the $\bs{u}_{t}$ term. At even lower $Re$, flow becomes creeping, reducing Eq.\ref{eq:governing_eqs} to the Stokes equations by further dropping $\bs{u}\cdot\nabla \bs{u}$.

We discretize Eq.\ref{eq:governing_eqs} on a Cartesian \textit{fine grid} that conforms to, or is an integer fraction of, the image pixels comprising $\Omega$. We use a marker and cell (MAC) finite difference method, with velocity unknowns defined at cell faces and pressure unknowns at cell centers, as shown in Fig.\ref{fig:unknown_sketch}c. We remark that our proposed preconditioners in later sections are by no means limited to Cartesian grids or the MAC scheme. Other discretizations discussed in Section \ref{sec:intro} (e.g., FVM, MFEM), where velocity and pressure are arranged in a spatially staggered fashion, are equally possible and would benefit from our preconditioners. The resulting discrete system, given by Eq.\ref{eq:Ax=b}, is nonlinear and is solved with Newton's method. At each iteration of Newton, a linearized system of the form $\mathrm{\hat{A}}\hat{x}\!=\!\hat{b}$ is solved.

The blocks $\mathrm{F}^k$, $\mathrm{G}^\top$, $\mathrm{G}$ of $\hat{\mathrm{A}}$ in Eq.\ref{eq:Ax=b} are discrete forms of the $\rho\partial_{t} + \rho\mathcal{L}(\bs{u}\cdot\nabla \bs{u}) - \mu\,\Delta$, gradient ($\nabla$), and divergence ($\nabla\cdot$) operators, respectively. The symbol $\mathcal{L}(\cdot)$ denotes the argument is linearized around the previous iterate, which is why only $\mathrm{F}^k$ has superscript $k$. For Stokes flow, $\mathrm{F}$ is symmetric and the residual $r^k$ is linear, allowing Newton to converge in one step. For Navier-Stokes flow, neither symmetry nor linearity are present. For practical domain sizes of real porous microstructures, $\hat{\mathrm{A}}$ is large and poorly conditioned, caused by the geometric complexity of $\Omega$ and the small fine-grids needed to resolve it. This demands iterative solvers like GMRES, which require preconditioning.

\begin{figure} [t!]
  \centering
  \centerline{\includegraphics[scale=0.52,trim={0 0 0 0},clip]{./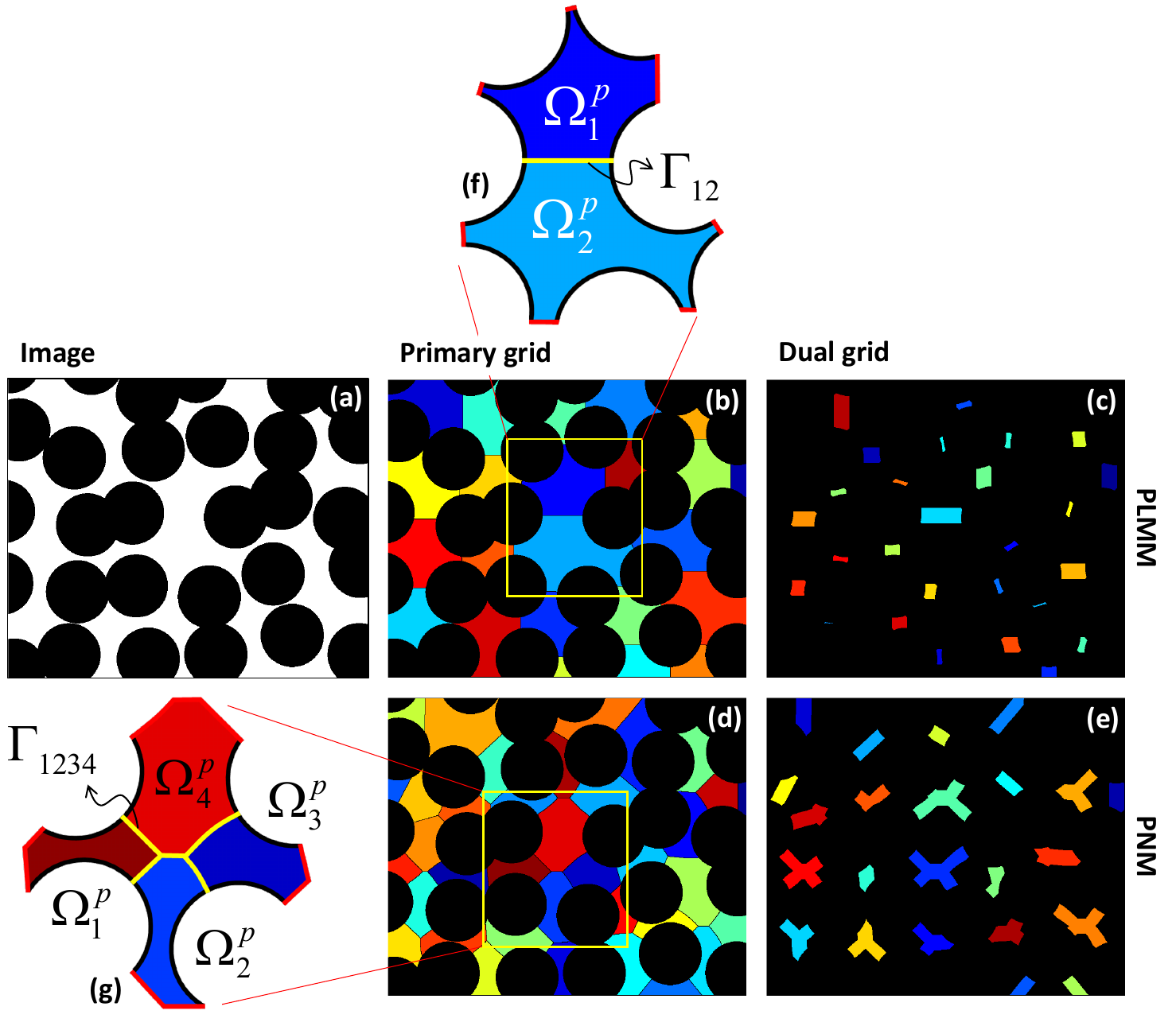}}
  \caption{Schematic of a pore-scale image and its decomposition into primary grids and dual grids in the PLMM and PNM preconditioner. (a) The white regions of the image represent the void space and black regions represent the solid phase. The Navier-Stokes equations are solved in the void space. (b, d) Primary grids (or subdomains) obtained from PLMM (b) and PNM (d) decompositions. (c, e) Corresponding dual grids that overlap the contact interfaces shared between primary grids. (f, g) Each basis function is associated with one contact interface (yellow). In PLMM, each contact interface is shared by only two adjacent primary grids. While in PNM, each contact interface is shared among multiple primary grids.}
\label{fig:main_sketch}
\end{figure}

\begin{figure} [t!]
  \centering
  \centerline{\includegraphics[scale=0.56,trim={0 0 0 0},clip]{./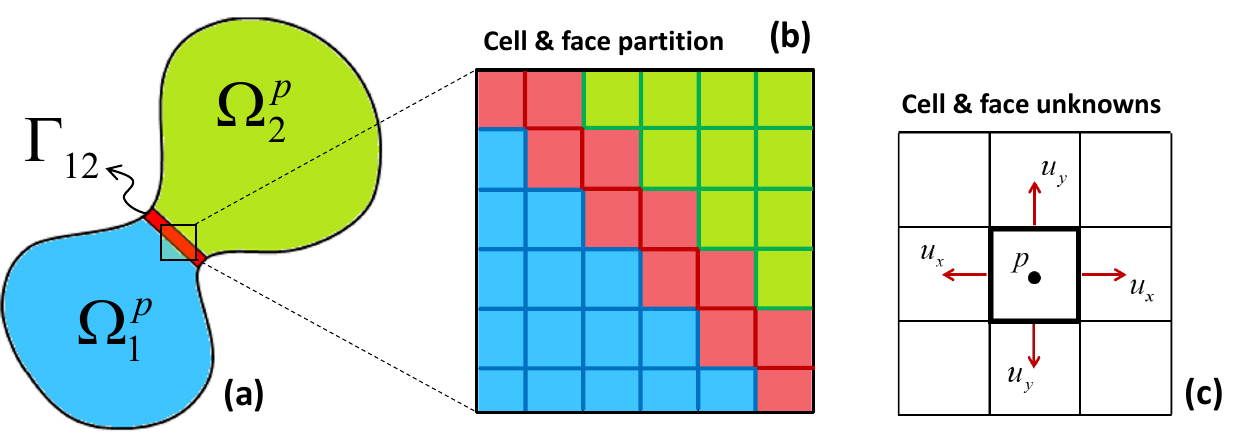}}
  \caption{(a) Two primary grids, $\smash{\Omega^p_1}$ and $\smash{\Omega^p_2}$, separated by a contact interface, $\smash{\Gamma_{12}}$. (b) Partitioning of the cells and faces of the fine grid among the two primary grids and the contact interface is shown. Light/dark blue cells/faces belong to $\smash{\Omega^p_1}$, light/dark green cells/faces belong to $\smash{\Omega^p_2}$, and light/dark red cells/faces belong to $\smash{\Gamma_{12}}$. (c) The cell unknowns correspond to pressure, $p$, and the face unknowns correspond to velocity, $\bs{u}$.}
\label{fig:unknown_sketch}
\end{figure}
\section{Algebraic monolithic preconditioner} \label{sec:multi_precon}

We extend the fully-algebraic monolithic preconditioners $a\mathrm{PLMM_S}$ and $a\mathrm{PNM_S}$ originally designed for the Stokes equations \cite{mehmani2025multiscale} to the Navier-Stokes equations. They are based on the pore-level multiscale method (PLMM) and the the pore network model (PNM). We refer to the extended preconditioners as $a\mathrm{PLMM_{NS}}$ and $a\mathrm{PNM_{NS}}$, where prefix ``$a$'' denotes ``algebraic.'' Both are two-level with a global (coarse) preconditioner $\mathrm{M_G}$ and a local (fine) smoother $\mathrm{M_L}$. Section \ref{sec:mono_struct} outlines the overall structure of the two-level preconditioners, combining $\mathrm{M_G}$ and $\mathrm{M_L}$. Since the mathematical structures of $a\mathrm{PLMM_{NS}}$ and $a\mathrm{PNM_{NS}}$ are identical, we detail $a\mathrm{PLMM_{NS}}$ first in Section \ref{sec:mono_PLMM} then highlight key differences of $a\mathrm{PNM_{NS}}$ in Section \ref{sec:mono_PNM}. In Section \ref{sec:dom_decomp}, we describe the domain decomposition central to $\mathrm{M_G}$ and $\mathrm{M_L}$. Section \ref{sec:mono_PLMM_MG} formulates $\mathrm{M_G}$, and Section \ref{sec:mono_PLMM_ML} reviews the additive-Schwarz smoother $\mathrm{M_L}$ by \cite{mehmani2025multiscale}. Since $a\mathrm{PLMM_{NS}}$ and $a\mathrm{PNM_{NS}}$ reduce to $a\mathrm{PLMM_S}$ and $a\mathrm{PNM_S}$ for Stokes flow, we drop subscripts ``NS'' and ``S'' below.

\subsection{Overall structure} \label{sec:mono_struct}
The monolithic $a$PLMM and $a$PNM preconditioners $\mathrm{M}$ are formulated as:
\begin{subequations} \label{eq:mono_struct}
\begin{align} 
	&\mathrm{M}^{-1} = \mathrm{M_G^{-1}} + \mathrm{M_L^{-1}}(\mathrm{I} - \hat{\mathrm{A}} \mathrm{M_G^{-1}}) \label{eq:mono_struct_a}
	\\
	&\mathrm{M_L^{-1}} = \sum_{i=1}^{n_{st}} \mathrm{M}_l^{-1} (\mathrm{I} - \hat{\mathrm{A}}\mathrm{M}_l^{-1})^{\,i-1} \label{eq:mono_struct_b}
\end{align}
\end{subequations}
where the global preconditioner $\mathrm{M_G}$ and the smoother $\mathrm{M_L}$ are combined multiplicatively, with the former attenuating low-frequency errors and the latter high-frequency errors. The local smoother itself is a multiplicative ($n_{st}$-stage) application of a \textit{base smoother} $\mathrm{M}_l$, which can be any black-box preconditioner (e.g., block Gauss-Seidel) or a custom smoother compatible with $\mathrm{M_G}$. Our numerical tests show that the additive-Schwarz smoother $\mathrm{M_{pd}}$ from \cite{mehmani2025multiscale} (reviewed in Section \ref{sec:mono_PLMM_ML}) provides superior performance. Therefore, we adopt $\mathrm{M}_l = \mathrm{M_{pd}}$ with $n_{st}\!=\!1$ throughout this work.

\subsection{Monolithic PLMM preconditioner}
\label{sec:mono_PLMM}

\subsubsection{Domain decomposition}\label{sec:dom_decomp}

To build $\mathrm{M_G}$ and $\mathrm{M_L}$, the domain $\Omega$ is first decomposed into $N^{p}$ non-overlapping subdomains $\Omega^{p}_{i}$, called \textit{primary grids}. The decomposition uses a modified watershed segmentation algorithm \cite{mehmani2021multiscale} that operates on a binary-image representation of $\Omega$, as shown in Fig.\ref{fig:main_sketch}b for the pore-scale image in Fig.\ref{fig:main_sketch}a. Primary grids appear as randomly colored regions. The interfaces between adjacent primary grids, $\Gamma_{ij}$, are called \textit{contact interfaces} (thick yellow line in Fig.\ref{fig:main_sketch}f). Watershed segmentation ensures that $\Omega^{p}_i$ correspond to local geometric enlargements of $\Omega$ and $\Gamma_{ij}$ to local constrictions. We also construct $N^{d}$ complementary \textit{dual grids} $\Omega^{d}_{i}$ that overlap contact interfaces and cover a thin region around them (see Fig.\ref{fig:main_sketch}c). Dual grids are built by successive morphological dilations of the pixels comprising each contact interface \cite{mehmani2018mult, mehmani2025multiscale}. By construction, $N^{d}$ equals the number of contact interfaces $N^{c}$. Dual grids are used only by $\mathrm{M_L}$ to remove high-frequency errors concentrated near $\Gamma_{ij}$ after each application of $\mathrm{M_G}$. Notice the union of $\Omega^{d}_{i}$ is much smaller than $\Omega$. Unlike the PLMM algorithm of \cite{mehmani2018mult}, where overlapping $\Omega^{d}_{i}$ are merged (sometimes creating large sample-spanning regions with costly local problems), here $\Omega^{d}_{i}$ are allowed to overlap, avoiding this computational drawback. Primary and dual grids are collectively termed \textit{coarse grids} when the distinction is unimportant.

\subsubsection{Global preconditioner} \label{sec:mono_PLMM_MG}
In iterative solvers, preconditioning means solving systems like $\mathrm{\hat{A}}\hat{x}\!=\!\hat{b}$ \textit{approximately} for $\hat{x}$ given $\hat{b}$. The proposed $\mathrm{M_G}$ is presented first as a recipe for computing an approximate solution, involving three operations called \textit{permutation}, \textit{reduction}, and \textit{prolongation}. These operations parallel those of \cite{mehmani2025multiscale} for the Stokes equations, with key differences highlighted below. We then provide a compact representation of $\mathrm{M_G^{-1}}$ as the composition of matrices and operators.
\\

\textbf{Permutation.} The goal here is to reshuffle the rows and columns of $\hat{\mathrm{A}}$ such that the fine-grid entires associated with each primary grid, $\Omega^{p}_{i}$, and each contact interface $\Gamma_{ij}$, are grouped together by applying a permutation matrix $\mathrm{W}$ to the linear system in Eq.\ref{eq:Ax=b}.
Consider the simple domain $\Omega$ depicted in Fig.\ref{fig:unknown_sketch}a, which is decomposed into two primary grids $\Omega^p_1$ and $\Omega^p_2$, sharing the contact interface $\Gamma_{12}$. With reference to Fig.\ref{fig:unknown_sketch}b, fine-grid \textit{cells} comprising $\Omega$ are partitioned into those that belong to: (1) $\Omega^p_1$ (light blue); (2) $\Omega^p_2$ (light green); and (3) $\Gamma_{12}$ (light red). Here, $\Gamma_{12}$ is defined by the thinnest 4-connected collection of pixels in 2D (and 6-connected in 3D) that topologically separate the two primary grids. Other definitions of $\Gamma_{12}$ may be used without loss of generality. 
A similar partitioning of the fine-grid \textit{faces} is performed as follows: (1) faces flanked by at least one cell from $\Omega^p_1$ belong to $\Omega^p_1$ (dark blue); (2) faces flanked by at least one cell from $\Omega^p_2$ belong to $\Omega^p_2$ (dark green); and (3) faces flanked by cells neither in $\Omega^p_1$ nor $\Omega^p_2$ belong to $\Gamma_{12}$ (dark red).
Given this labeling of cells and faces, the matrix $\mathrm{W}$ is constructed as follows:
\begin{subequations} \label{eq:permute}
\begin{equation} \label{eq:permute_act}
\underbrace{\mathrm{W}^\top \hat{\mathrm{A}} \mathrm{W}}_{\mathrm{A}}
\underbrace{\mathrm{W}^\top \hat{x}}_{x} = 
\underbrace{\mathrm{W}^\top \hat{b}}_{b}
\qquad \Rightarrow \qquad
\mathrm{A}x=b
\end{equation}
with the block structure below for $\mathrm{A}$, $b$, and $x$:
\begin{equation} \label{eq:permute_blk}
	\mathrm{A} = 
	\begin{bmatrix}
	\mathrm{A}^p_p & \mathrm{A}^p_c & \mathrm{A}^p_f \\
	\mathrm{A}^c_p & \mathrm{A}^c_c & \mathrm{A}^c_f \\
	\mathrm{A}^f_p & \mathrm{A}^f_c & \mathrm{A}^f_f
	\end{bmatrix}
	\qquad
	b =
	\begin{bmatrix}
	b^p \\
	b^c \\
	b^f
	\end{bmatrix}
	\qquad
	x =
	\begin{bmatrix}
	x^p \\
	x^c \\
	x^f
	\end{bmatrix}
\end{equation}
\end{subequations}
We remark that the matrix $\mathrm{W}$ is unitary and satisfies $\mathrm{W}\mathrm{W}^\top\!=\!\mathrm{I}$. 

The super/subscript $p$ in each block denotes the \textit{cell and face} unknowns/residuals belonging to the primary grids. The super/subscript $c$ and $f$ denote the \textit{cell only} and $\textit{face only}$ unknowns/residuals belonging to the contact interfaces, respectively. Recall that the cell and face unknowns correspond to pressure and velocity, respectively (Fig.\ref{fig:unknown_sketch}c).
\\

\textbf{Reduction.} The goal of reduction is to introduce \textit{closure} or \textit{localization assumptions} that simplify and effectively decouple the permuted system in Eq.\ref{eq:permute}. For the Stokes equations, the steps outlined below enforce the closure BCs $p\!=\!\mathrm{const}$ and $\partial_n\bs{u}\!=\!\bs{0}$ on each contact interface. In \cite{mehmani2018mult}, a local analysis was performed, demonstrating these BCs are nearly optimal for Stokes flow based on arguments that streamlines exhibit a converging-diverging pattern at geometric constrictions ($\Gamma_{12}$ in Fig.\ref{fig:main_sketch}b). For the Navier-Stokes equations, we adopt the same reduction steps but with two caveats. The velocity closure no longer corresponds to $\partial_n\bs{u}\!=\!\bs{0}$ due to the asymmetric upwind discretization of the inertia term in Eq.\ref{eq:navstokes_mom}. Additionally, the suitability of these closures for Navier-Stokes flow is diminished given the more complex streamline patterns near geometric constrictions compared to Stokes flow, as we illustrate later.

To impose $\partial\bs{u}/\partial n\!\approx\!0$ (where ``$\approx$'' means ``inexact'') at contact interfaces, we first approximate $\mathrm{A}$ as follows:
\begin{equation} \label{eq:reduce_1}
	\mathrm{A} \approx 
	\begin{bmatrix}
	\mathrm{\tilde{A}}^p_p & \mathrm{A}^p_c & \mathrm{O} \\
	\mathrm{A}^c_p & \mathrm{\tilde{A}}^c_c & \mathrm{O} \\
	\mathrm{A}^f_p & \mathrm{A}^f_c & \mathrm{A}^f_f
	\end{bmatrix}
	\,
	\qquad
	\qquad
	\begin{matrix}
	\mathrm{\tilde{A}}^p_p = \mathrm{A}^p_p + \mathrm{diag} \left(\mathrm{csum}(\mathrm{A}^p_f)\right) \\
	\mathrm{\tilde{A}}^c_c = \mathrm{A}^c_c + \mathrm{diag} \left(\mathrm{csum}(\mathrm{A}^c_f)\right)
	\end{matrix}
\end{equation}
where the operation $\mathrm{csum(\cdot)}$ sums all columns of its argument matrix to yield a single column vector. The operation $\mathrm{diag(\cdot)}$ takes an input vector and creates a diagonal matrix whose diagonal entries coincide with those of the vector.
Notice by imposing $\partial_n\bs{u}\!\approx\!\bs{0}$ in Eq.\ref{eq:reduce_1}, the face-only unknowns $x^{f}$ defined at contact interfaces are now decoupled from the system. Therefore, we may henceforth focus on solving the smaller system $\tilde{\mathrm{A}}\tilde{x}\!=\!\tilde{b}$ with:
\begin{equation} \label{eq:Atilde}
	\mathrm{\tilde{A}} =
	\begin{bmatrix}
	\mathrm{\tilde{A}}^p_p & \mathrm{A}^p_c \\
	\mathrm{A}^c_p & \mathrm{\tilde{A}}^c_c
	\end{bmatrix}
	\qquad
	\tilde{b} =
	\begin{bmatrix}
	 b^p \\
	 b^c
	\end{bmatrix}
	\qquad
	\tilde{x} =
	\begin{bmatrix}
	 x^p \\
	 x^c
	\end{bmatrix}
\end{equation}
after which $x^f$ can be obtained by back substitution (this is fast because $\smash{\mathrm{A}^f_f}$ is a very small):
\begin{equation} \label{eq:xf}
x^f = \left(\mathrm{A}^f_f \right)^{-1}\left(b^f - \mathrm{A}^f_p x^p - \mathrm{A}^f_c x^c \right)
\end{equation}

Focusing next on Eq.\ref{eq:Atilde}, each block consists of the following sub-blocks:
\begin{subequations} \label{eq:Atilde_blk}
\begin{equation}
	\mathrm{\tilde{A}}^p_p =
	\begin{bmatrix}
	\mathrm{\tilde{A}}^{p_1}_{p_1} & \cdots & \mathrm{\tilde{A}}^{p_1}_{p_n} \\
	\vdots & \ddots & \vdots \\	
	\mathrm{\tilde{A}}^{p_n}_{p_1} & \cdots & \mathrm{\tilde{A}}^{p_n}_{p_n}
	\end{bmatrix}
	\quad
	\mathrm{\tilde{A}}^c_c =
	\begin{bmatrix}
	\mathrm{\tilde{A}}^{c_1}_{c_1} & \cdots & \mathrm{\tilde{A}}^{c_1}_{c_m} \\
	\vdots & \ddots & \vdots \\	
	\mathrm{\tilde{A}}^{c_m}_{c_1} & \cdots & \mathrm{\tilde{A}}^{c_m}_{c_m}
	\end{bmatrix}
	\quad
	\mathrm{A}^p_c =
	\begin{bmatrix}
	\mathrm{A}^{p_1}_{c_1} & \cdots & \mathrm{A}^{p_1}_{c_m} \\
	\vdots & \ddots & \vdots \\	
	\mathrm{A}^{p_n}_{c_1} & \cdots & \mathrm{A}^{p_n}_{c_m}
	\end{bmatrix}
	\quad
	\mathrm{A}^c_p =
	\begin{bmatrix}
	\mathrm{A}^{c_1}_{p_1} & \cdots & \mathrm{A}^{c_1}_{p_n} \\
	\vdots & \ddots & \vdots \\	
	\mathrm{A}^{c_m}_{p_1} & \cdots & \mathrm{A}^{c_m}_{p_n}
	\end{bmatrix}
\end{equation}
\begin{equation} \label{eq:Atilde_blk_bx}
	b^p =
	\begin{bmatrix}
	 b^{p_1} \\
	 \vdots \\
	 b^{p_n}
	\end{bmatrix}
	\qquad
	b^c =
	\begin{bmatrix}
	 b^{c_1} \\
	 \vdots \\
	 b^{c_m}
	\end{bmatrix}
	\qquad
	x^p =
	\begin{bmatrix}
	 x^{p_1} \\
	 \vdots \\
	 x^{p_n}
	\end{bmatrix}
	\qquad
	x^c =
	\begin{bmatrix}
	 x^{c_1} \\
	 \vdots \\
	 x^{c_m}
	\end{bmatrix}
	\qquad
\end{equation}
\end{subequations}
where $n\!=\!N^p$ and $m\!=\!N^c$, with $N^p$ and $N^c$ denoting the total number of primary grids and contact interfaces in $\Omega$. Each sub-block corresponds to entries associated with either a primary grid, a contact interface, or a coupling between the two. To complete the imposition of $\partial_n\bs{u}\!\approx\!\bs{0}$, we further simplify the $\tilde{\mathrm{A}}^p_p$ sub-block as follows:
\begin{equation} \label{eq:reduce_2}
	\mathrm{\tilde{A}}^p_p \approx
	\mathrm{\bar{A}}^p_p =
	\begin{bmatrix}
	\mathrm{\bar{A}}^{p_1}_{p_1} &  & \mathrm{O} \\
	 & \ddots & \\	
	\mathrm{O} &  & \mathrm{\bar{A}}^{p_n}_{p_n}
	\end{bmatrix}
	\qquad
	\qquad
	\bar{\mathrm{A}}^{p_i}_{p_i} = \tilde{\mathrm{A}}^{p_i}_{p_i} +
	     \mathrm{diag}\left( \sum_{\forall j\neq i} 
	     \mathrm{csum}(\mathrm{\tilde{A}}^{p_i}_{p_j}) \right)
\end{equation}
where $\bar{\mathrm{A}}^{p}_{p}$ is block-diagonal matrix with the entries of each primary grid fully decoupled from those of other primary grids.
Similar to Eq.\ref{eq:reduce_1}, the off-diagonal blocks of each block-row in $\mathrm{\tilde{A}}^p_p$ are column-summed and added to its main diagonal entries.
We are now left with solving the simpler system $\bar{\mathrm{A}}\tilde{x}\!=\!\tilde{b}$, with:
\begin{equation} \label{eq:Abar}
	\mathrm{\bar{A}} = 
	\begin{bmatrix}
	   \mathrm{\bar{A}}^p_p & \mathrm{A}^p_c \\
	   \mathrm{A}^c_p & \mathrm{\tilde{A}}^c_c
	\end{bmatrix}
\end{equation}

To enforce $p\!=\!\mathrm{const}$ at contact interfaces, we introduce a \textit{reduction matrix}, $\mathrm{Q}$:
\begin{subequations} \label{eq:reduce_3}
\begin{equation} \label{eq:reduced_defs}
	\mathrm{Q} =
	\begin{bmatrix}
		\mathrm{I}_{N^p_f\times N^p_f} & \mathrm{O} \\
		\mathrm{O} & \mathrm{Q^o}
	\end{bmatrix}
	\qquad
	\mathrm{Q^o} =
	\begin{bmatrix}
		\bs{1}^{c_1} & & \mathrm{O} \\
		& \ddots & \\
		\mathrm{O} & & \bs{1}^{c_m}
	\end{bmatrix}_{N^c_f\times N^c}
	\;
	\bs{1}^{c_i} =
	\begin{bmatrix}
		1 \\
		\vdots \\
		1
	\end{bmatrix}_{N^{c_i}_f\times 1}
\end{equation}
such that when applied, yields the following \textit{reduced system}:
\begin{equation} \label{eq:reduced_sys}
	\mathrm{\bar{A}}\tilde{x} = \tilde{b} \;,\quad \tilde{x}\simeq \mathrm{Q}x_M 
	\quad \Rightarrow \quad 
	\underbrace{\mathrm{Q}^\top \mathrm{\bar{A}}\mathrm{Q}}_{\mathrm{A_M}} x_M
	     = \underbrace{\mathrm{Q}^\top \tilde{b}}_{b_M}
	\quad \Rightarrow \quad
	\mathrm{A_M} x_M  = b_M
\end{equation}
\end{subequations}
In Eq.\ref{eq:reduced_defs}, $\smash{N^p_f}$, $\smash{N^c_f}$, and $\smash{N^{c_i}_f}$ denote the number of fine-scale unknowns belonging to all primary grids, all contact interfaces, and the contact interface labeled $c_i$, respectively. We remark that $\smash{N^p_f}$ includes both velocity and pressure unknowns, whereas $\smash{N^c_f}$ and $\smash{N^{c_i}_f}$ include only pressure unknowns. 
The reduction matrix $\mathrm{Q}$ is block-diagonal and is comprised of only $0$ and $1$ entries. Left-multiplying (or right-multiplying) a matrix by $\smash{\mathrm{Q}^\top}$ (or $\mathrm{Q}$) has the effect of performing a row-sum (or column-sum) of all fine-scale entries associated with each contact interface separately. As the entries belonging to the interface $c_i$ in $\bar{\mathrm{A}}$ correspond to only pressure unknowns, the closure $p\!=\!\mathrm{const}$ over $c_i$ is imposed by the column-sum. The row-sum ensures overall (``weak'') mass conservation across each interface.

The reduced system has now the following block structure:
\begin{equation} \label{eq:reduced_blk}
	\mathrm{A_M} =
	\begin{bmatrix}
		\bar{\mathrm{A}}_p^p & \bar{\mathrm{A}}_c^p \\
		\bar{\mathrm{A}}_p^c & \bar{\mathrm{A}}_c^c
	\end{bmatrix}
	\qquad
	b_M =
	\begin{bmatrix}
	b^p \\
	\bar{b}^c
	\end{bmatrix}
	\qquad
	x_M =
	\begin{bmatrix}
	x^p \\
	x^o
	\end{bmatrix}
\end{equation}
where $x^o$ (with length $N^c\!\times\!1$) contains \textit{coarse-scale} pressure unknowns defined at contact interfaces (one per interface). 
\\

\textbf{Prolongation.} The goal here is to build a \textit{prolongation matrix}, $\mathrm{P}$, and a \textit{restriction matrix}, $\mathrm{R}$, such that a coarse-scale problem can be formulated to solve for the interface pressures $x^o$. This is done as follows:
\begin{equation} \label{eq:coarse_prob}
\mathrm{A_M} x_M = b_M, \quad  x_M = \mathrm{P} 
	\begin{bmatrix}
	x^o \\
	y^o
	\end{bmatrix}
	\quad \Rightarrow \quad 
	\underbrace{\mathrm{R}\mathrm{A_M}\mathrm{P}}_{\mathrm{A^o}}
	\begin{bmatrix}
	x^o \\
	y^o
	\end{bmatrix}
	= \underbrace{\mathrm{R} b_M}_{b^o}
\end{equation}
where $y^o$ are auxiliary coarse-scale unknowns associated with the primary grids.

The prolongation matrix $\mathrm{P}$ has the following block structure:
\begin{subequations} \label{eq:prolong}
\begin{equation} \label{eq:prolong_1}
	\mathrm{P} =
 	\begin{bmatrix}
		\mathrm{B} & \mathrm{C} \\
		\mathrm{I} & \mathrm{O}
	\end{bmatrix}
	\qquad\qquad
	\mathrm{B} =
	\begin{bmatrix}
		p_{c_1}^{p_1} & p_{c_2}^{p_1} & \cdots & p_{c_m}^{p_1} \\
		p_{c_1}^{p_2} & p_{c_2}^{p_2} & \cdots & p_{c_m}^{p_2} \\
		\vdots & \vdots & \ddots & \vdots \\
		p_{c_1}^{p_n} & p_{c_2}^{p_n} & \cdots & p_{c_m}^{p_n}
	\end{bmatrix}_{N_f^p \times N^c}
	\qquad 
	\mathrm{C} = 
	\begin{bmatrix}
		c^{p_1} & & & \mathrm{O}\\
		 & c^{p_2} & &\\
		 & & \ddots &\\
		\mathrm{O} & & & c^{p_n}
	\end{bmatrix}_{N^p_f \times N^p}
\end{equation}
where $p^{p_i}_{c_j}$ and $c^{p_i}$ are called \textit{shape} and \textit{correction} vectors, respectively. 

The shape vectors are defined as follows:
\begin{align} 
	p^{p_i}_{c_j} &= 
	\begin{cases}
		- \left(\bar{\mathrm{A}}^{p_i}_{p_i}\right)^{-1}  R^{p_i}_p \bar{\mathrm{A}}^p_c e^{c_j}_c\,, \qquad c_j \in C^{p_i}
	        \\
    		\mathrm{O}\,,  \hspace{3.05cm} c_j \notin C^{p_i}
	\end{cases} \label{eq:prolong_3}
	\\
	e_c^{c_j} &=
	\begin{bmatrix}
		\delta_{1j}, \delta_{2j}, \cdots, \delta_{mj}
	\end{bmatrix}^\top_{N^c \times 1} \label{eq:prolong_4}
\end{align}
The index set $C^{p_i}$ contains all contact interfaces that intersect the boundary of the primary grid $p_i$. The Kronecker delta $\delta_{ab}$ is used to define the unit vector $e^{c_j}_c$, with 1 occupying its $j^{th}$ entry and 0 elsewhere. The matrix $R^{p_i}_p$ is called a \textit{contraction matrix} that restricts a fine-scale $\smash{N^p_f}\!\times 1$ vector defined over the union of all primary grids (excluding the contact interfaces) to a fine-scale $\smash{N^{p_i}_f}\!\times 1$ vector defined over the primary grid $p_i$ only. More precisely:
\begin{equation} \label{eq:shape_func}
	\mathrm{R}_p^{p_i} =
	\begin{bmatrix}
		\Delta^{p_i}_{p_1}, \Delta^{p_i}_{p_2}, \cdots, \Delta^{p_i}_{p_n}
	\end{bmatrix}_{N^{p_i}_f\times N^p_f}
	\qquad\qquad
	\Delta^{p_i}_{p_j} =
	\begin{cases}
		\mathrm{I}_{N^{p_i}_f \times N^{p_i}_f} \quad \text{\; if }\; i = j \\
		\mathrm{O}_{N^{p_i}_f \times N^{p_j}_f} \quad \text{if }\; i \neq j
	\end{cases}
\end{equation}
Similar to $\smash{N^p_f}$ defined earlier, $\smash{N^{p_i}_f}$ is the number of fine-scale unknowns that belong to primary grid $p_i$. From Eq.\ref{eq:prolong_3}, it is obvious that the matrix $\mathrm{B}$ is very sparse, because only two shape vectors per column contain non-zero entries. These two shape vectors belong to the two primary grids sandwiching the contact interface associated with each column (Fig.\ref{fig:main_sketch}f). As we shall discuss in Section \ref{sec:mono_PNM}, this does not hold for the $a$PNM preconditioner (Fig.\ref{fig:main_sketch}g). 

Returning back to Eq.\ref{eq:prolong_1}, the correction vectors are defined as follows:
\begin{equation}
	c^{p_i} = \left(\bar{\mathrm{A}}^{p_i}_{p_i}\right)^{-1}b_{B}^{p_i} \label{eq:prolong_2}
\end{equation}
where $b_{B}^{p_i}$ is a right-hand side (RHS) vector defined on primary grid $p_i$ that accounts for any non-homogeneous BCs and source terms ($\bs{b}\!\neq\!0$) in Eq.\ref{eq:governing_eqs}. This does \textit{not} equal $b^{p_i}$ in Eq.\ref{eq:Atilde_blk_bx}, which also includes non-zero entries associated with the PDE residual in the interior of $\Omega^p_i$, specifically $b^{p_i} = b_{B}^{p_i} + b_{I}^{p_i}$. To compute $b_{B}^{p_i}$, we evaluate the residual of the steady-state form of Eq.\ref{eq:governing_eqs} (dropping the time derivative, or equivalently using zero velocity at the previous time step) at the origin (i.e., $r(\bs{u} = \bs{0},p = 0)$), then subject the result to the same permutation in Eq.\ref{eq:permute}. We remark that for the Stokes equations, where Eq.\ref{eq:governing_eqs} is linear, the discrete system $\mathrm{\hat{A}}\hat{x}\!=\!\hat{b}$ yields a RHS vector $\hat{b}$ that contains only the effect of non-homogeneous BCs, resulting in $\smash{b^{p_i}\!=\!b_B^{p_i}}$. Hence, no distinction between these two vectors was made in \cite{mehmani2025multiscale}. Here, we use $\smash{b_B^{pi}}$ instead of $\smash{b^{pi}}$ in Eq.\ref{eq:prolong_2} because extensive numerical tests revealed the latter leads to poor GMRES convergence by introducing high-frequency errors in $c^{p_i}$ over primary grids.
\end{subequations}

Having completed the definition of $\mathrm{P}$, we now define the restriction matrix in Eq.\ref{eq:coarse_prob} as:
\begin{align}
\mathrm{R} = 
    \begin{bmatrix}
    \mathrm{O} & \mathrm{I} \\
    \Pi(\mathrm{C}^\top) & \mathrm{O}
    \end{bmatrix}  \label{eq:restrict_FVM}
\end{align}
where $\mathrm{I}$ is the $N^c\!\times N^c$ identity matrix, and $\Pi(\mathrm{C}^\top)$ is a matrix with 0 and 1 entries that reflects the sparsity pattern of $\mathrm{C}^\top$ in Eq.\ref{eq:prolong_1} (i.e., entries are 1 where $\mathrm{C}^\top$ is non-zero, and 0 elsewhere).
Left-multiplying $\mathrm{A_M}$ by $\mathrm{R}$ performs a row-sum over each primary grid while leaving the rows associated with contact interfaces intact. Combined with the earlier row-sum performed through the application of $\mathrm{Q}$ in Eq.\ref{eq:reduced_sys} over entries of each contact interface, mass conservation is imposed in a weak (integrated) sense within each primary grid and across each contact interface. Note that $\mathrm{R}\!\neq\!\mathrm{P}^\top$. The choice of the restriction matrix in Eq.\ref{eq:restrict_FVM} is motivated by the fact that a Galerkin restriction ($\mathrm{R}\!=\!\mathrm{P}^\top$) leads to poor solver performance for (Navier-)Stokes equations discretized with MAC finite differences or similar schemes \cite{mehmani2025multiscale}.

We note that constructing $\mathrm{R}$ is trivial, and constructing $\mathrm{P}$ is embarrassingly parallelizable. The latter follows because computing shape and correction vectors in Eqs.\ref{eq:prolong_3} and \ref{eq:prolong_2} involves solving fully decoupled local systems on primary grids. Since only RHS vectors differ between systems, local LU-decomposition can accelerate computations.
\\

\textbf{Summary.} The above steps constitute a recipe for computing the action of the global preconditioner, $\mathrm{M_G^{-1}}$, on a RHS vector $\hat{b}$. For notational simplicity, and ease of reference later, we express this action as:
\begin{equation} \label{eq:iMG}
		\mathrm{M_G^{-1}} = \hat{\mathrm{P}}\, (\hat{\mathrm{R}} \check{\mathrm{A}} \hat{\mathrm{P}})^{-1} \hat{\mathrm{R}}
\end{equation}
	where $\hat{\mathrm{P}}$ and $\hat{\mathrm{R}}$ are \textit{effective} prolongation and restriction matrices, respectively, and $\check{\mathrm{A}}$ a transformation of $\hat{\mathrm{A}}$:
\begin{subequations} \label{eq:iMG_PR}
	\begin{align}
		&\hat{\mathrm{P}} = \mathrm{W} \mathrm{Q} \mathrm{P} \label{eq:iMG_PR_a}\\
		&\hat{\mathrm{R}} = \mathrm{R} \mathrm{Q}^\top \mathrm{W}^\top \label{eq:iMG_PR_b}\\
		&\check{\mathrm{A}} = \mathrm{W}\mathcal{C}(\mathrm{W}^\top
		                      \hat{\mathrm{A}}\mathrm{W})\mathrm{W}^\top
		                      \label{eq:iMG_PR_c}
	\end{align}
\end{subequations}
Recall $\mathrm{W}$, $\mathrm{Q}$, $\mathrm{P}$, and $\mathrm{R}$ are the permutation (Eq.\ref{eq:permute_act}), reduction (Eq.\ref{eq:reduced_defs}), prolongation (Eq.\ref{eq:prolong_1}), and restriction (Eq.\ref{eq:restrict_FVM}) matrices introduced earlier. The operator $\mathcal{C}(\cdot)$ is \textit{linear} and encapsulates the column-sums performed in Eqs.\ref{eq:reduce_1} and \ref{eq:reduce_2} to impose the velocity closure BC. The $\mathrm{csum(\cdot)}$ operation in Eqs.\ref{eq:reduce_1} and \ref{eq:reduce_2} does not admit a simple matrix representation. Left-multiplying a coarse-grid vector, like $[x^o, y^o]^\top$ in Eq.\ref{eq:coarse_prob}, by $\hat{\mathrm{P}}$ maps it onto the fine grid. Left-multiplication by $\hat{\mathrm{R}}$ maps in the inverse direction. The above representation of $\mathrm{M_G}$ makes one of its key properties immediately clear:

\begin{proposition} \label{prp:project}
The matrix $M_G^{-1}\hat{A}$ is not a projection, where $M_G^{-1}$ is given by Eq.\ref{eq:iMG} for the saddle-point Stokes or Navier-Stokes equations. This lies in contrast to $M_G^{-1}\!=\!\hat{P}(\hat{R}\hat{A}\hat{P})^{-1}\hat{R}$ for elliptic equations, where $M_G^{-1}\hat{A}$ is a projection.
\end{proposition}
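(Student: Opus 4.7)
The plan is to verify the two parts of the statement by directly computing the square of $\mathrm{M_G^{-1}}\hat{\mathrm{A}}$ in each case and comparing. Recall that a matrix $\Pi$ is a projection iff $\Pi^2=\Pi$.

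For the elliptic case, set $\Pi_{\text{ell}} = \hat{\mathrm{P}}(\hat{\mathrm{R}}\hat{\mathrm{A}}\hat{\mathrm{P}})^{-1}\hat{\mathrm{R}}\hat{\mathrm{A}}$. Squaring gives
\begin{equation*}
\Pi_{\text{ell}}^2 = \hat{\mathrm{P}}(\hat{\mathrm{R}}\hat{\mathrm{A}}\hat{\mathrm{P}})^{-1}\,\underbrace{\hat{\mathrm{R}}\hat{\mathrm{A}}\hat{\mathrm{P}}(\hat{\mathrm{R}}\hat{\mathrm{A}}\hat{\mathrm{P}})^{-1}}_{=\,\mathrm{I}}\,\hat{\mathrm{R}}\hat{\mathrm{A}} = \Pi_{\text{ell}},
\end{equation*}
so $\Pi_{\text{ell}}$ is indeed a projection (onto $\mathrm{range}(\hat{\mathrm{P}})$ along $\mathrm{null}(\hat{\mathrm{R}}\hat{\mathrm{A}})$).

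For the saddle-point case, use the representation in Eqs.\ref{eq:iMG}--\ref{eq:iMG_PR} to write $\Pi_{\text{sp}} = \hat{\mathrm{P}}(\hat{\mathrm{R}}\check{\mathrm{A}}\hat{\mathrm{P}})^{-1}\hat{\mathrm{R}}\hat{\mathrm{A}}$ and compute
\begin{equation*}
\Pi_{\text{sp}}^2 = \hat{\mathrm{P}}(\hat{\mathrm{R}}\check{\mathrm{A}}\hat{\mathrm{P}})^{-1}\,\hat{\mathrm{R}}\hat{\mathrm{A}}\hat{\mathrm{P}}(\hat{\mathrm{R}}\check{\mathrm{A}}\hat{\mathrm{P}})^{-1}\hat{\mathrm{R}}\hat{\mathrm{A}}.
\end{equation*}
The telescoping that worked above now requires $\hat{\mathrm{R}}\hat{\mathrm{A}}\hat{\mathrm{P}} = \hat{\mathrm{R}}\check{\mathrm{A}}\hat{\mathrm{P}}$, equivalently $\hat{\mathrm{R}}(\hat{\mathrm{A}}-\check{\mathrm{A}})\hat{\mathrm{P}} = 0$. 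So the claim reduces to showing this identity fails in general.

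The remaining step is to argue $\hat{\mathrm{R}}(\hat{\mathrm{A}}-\check{\mathrm{A}})\hat{\mathrm{P}}\neq 0$. Since $\mathrm{W}$ is unitary, $\hat{\mathrm{A}} = \mathrm{W}(\mathrm{W}^\top\hat{\mathrm{A}}\mathrm{W})\mathrm{W}^\top$, whereas $\check{\mathrm{A}} = \mathrm{W}\mathcal{C}(\mathrm{W}^\top\hat{\mathrm{A}}\mathrm{W})\mathrm{W}^\top$. The column-sum operator $\mathcal{C}$ introduced in Eqs.\ref{eq:reduce_1} and \ref{eq:reduce_2} actively modifies the diagonal blocks associated with primary grids and interface cells by folding in entries from the off-diagonal and interface-face blocks, so $\hat{\mathrm{A}}-\check{\mathrm{A}}\neq \mathrm{O}$. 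I would then argue that these modifications do not lie in $\mathrm{null}(\hat{\mathrm{R}}\cdot)\cup\mathrm{range}(\hat{\mathrm{P}})^{\perp}$: the columns of $\hat{\mathrm{P}}$, built from the shape/correction vectors of Eqs.\ref{eq:prolong_3} and \ref{eq:prolong_2}, contain nonzero entries on the face unknowns adjacent to contact interfaces precisely where $\mathcal{C}$ alters $\hat{\mathrm{A}}$, and $\hat{\mathrm{R}}$ row-sums these contributions over each primary grid without cancellation.

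The main obstacle is establishing this last non-vanishing in full generality from the algebraic definitions alone. The cleanest way is to exhibit it concretely on the minimal two-subdomain configuration of Fig.\ref{fig:unknown_sketch}: evaluate $\hat{\mathrm{R}}(\hat{\mathrm{A}}-\check{\mathrm{A}})\hat{\mathrm{P}}$ for a single shape vector associated with $\Gamma_{12}$ and verify that at least one entry is nonzero. Such a single counter-example suffices to disprove the projection property, so a general structural proof is not needed. The overall proof is therefore short, with the telescoping computation being essentially mechanical and the only real content lying in pinpointing why $\mathcal{C}$, together with the non-Galerkin choice $\hat{\mathrm{R}}\neq\hat{\mathrm{P}}^\top$ from Eq.\ref{eq:restrict_FVM}, breaks the idempotency that the elliptic construction enjoys.
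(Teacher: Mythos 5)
Your telescoping computation is essentially the paper's argument: the paper's proof is a one-line substitution of Eqs.\ref{eq:iMG}--\ref{eq:iMG_PR} into $(\mathrm{M_G^{-1}}\hat{\mathrm{A}})^2$, concluding that idempotency fails unless $\mathcal{C}(\cdot)$ is the identity, and your derivation makes the resulting criterion $\hat{\mathrm{R}}(\hat{\mathrm{A}}-\check{\mathrm{A}})\hat{\mathrm{P}}=0$ explicit (note $\check{\mathrm{A}}=\hat{\mathrm{A}}$ precisely when $\mathcal{C}=\mathrm{I}$ since $\mathrm{W}$ is unitary). One small inaccuracy in your closing remark: you credit the non-Galerkin choice $\hat{\mathrm{R}}\neq\hat{\mathrm{P}}^\top$ as part of what breaks idempotency, but your own elliptic computation shows $\Pi_{\text{ell}}^2=\Pi_{\text{ell}}$ holds for any $\hat{\mathrm{R}}$ with $\hat{\mathrm{R}}\hat{\mathrm{A}}\hat{\mathrm{P}}$ invertible, Galerkin or not; the sole culprit is $\mathcal{C}\neq\mathrm{I}$.
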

\begin{proof}
A simple substitution of Eqs.\ref{eq:iMG}-\ref{eq:iMG_PR} into $(\mathrm{M_G^{-1}\hat{A}})^2$ shows that the product does not equal $\mathrm{M_G^{-1}\hat{A}}$, unless $\mathcal{C}(\cdot)$ is the identity matrix. For elliptic equations, $\mathcal{C}(\cdot)$ is identity \cite{li2024machine}, but not for saddle-point equations studied herein.
\end{proof}

\noindent\textbf{Remark 1.} We found empirically that $\mathrm{M_G^{-1}}\!=\!\mathrm{\hat{P}(\hat{
R}\hat{A}\hat{P})^{-1}\hat{R}}$ performs almost identically to Eq.\ref{eq:iMG} (notice the difference in using $\mathrm{\hat{A}}$ versus $\mathrm{\check{A}}$). In other words, the column-sum operator $\mathcal{C}(\cdot)$ in Eq.\ref{eq:iMG_PR_c} can be replaced by the identity matrix. We adopt this formulation to produce all results in Section \ref{sec:test_prec}, because it saves the extra column-sum operation.

\subsubsection{Local smoother} \label{sec:mono_PLMM_ML}
Application of the global preconditioner $\mathrm{M_G}$ leaves behind high-frequency errors concentrated near contact interfaces \cite{mehmani2025multiscale}. Therefore, a compatible local smoother $\mathrm{M_L}$ is needed to attenuate these errors. We adopt the \textit{dual-primary} smoother\footnote{A term we coin here, mirroring the \textit{contact-grain} smoother defined in \cite{li2023smooth} for linear elasticity.} $\mathrm{M_L}\!=\!\mathrm{M_{dp}}$ of \cite{mehmani2025multiscale}, which has exhibited superior performance over other black-box alternatives. The dual-primary smoother combines two additive-Schwarz (block-Jacobi) preconditioners multiplicatively:
\begin{equation} \label{eq:Mdp}
\mathrm{M_{dp}^{-1}} = \mathrm{M_d^{-1}} + 
                     \mathrm{M_p^{-1}}
                     \left(\mathrm{I} - \mathrm{\hat{A}} \mathrm{M_d^{-1}} \right)
\end{equation}
where $\mathrm{M_d^{-1}}$ is applied first to eliminate errors within dual grids, then $\mathrm{M_p^{-1}}$ is applied to remove errors within primary grids. Both $\mathrm{M_d}$ and $\mathrm{M_p}$ have standard additive-Schwarz formulations \cite{saad2003book}:
\begin{equation} \label{eq:Mp_Md}
	\mathrm{M_p^{-1}}
	= \sum_{i=1}^{N^p}
	\mathrm{E}^{p_i}_f \,
	(\underbrace{\mathrm{R}^{p_i}_f \mathrm{\hat{A}}\, \mathrm{E}^{p_i}_f}
	      _{\hat{\mathrm{A}}_{p_i}})^{-1} \,
	\mathrm{R}^{p_i}_f
	\qquad
	\mathrm{M_d^{-1}}
	= \sum_{i=1}^{N^c} 
	\mathrm{E}^{d_i}_f  \,
	(\underbrace{\mathrm{R}^{d_i}_f \mathrm{\hat{A}}\, \mathrm{E}^{d_i}_f}
	      _{\hat{\mathrm{A}}_{d_i}})^{-1} \,
	\mathrm{R}^{d_i}_f\
\end{equation}
The matrices $\smash{\mathrm{R}^{p_i}_f}$ and $\smash{\mathrm{E}^{p_i}_f}$ restrict and extend vectors between $\Omega^{p}_{i}$ and $\Omega$, respectively, while $\smash{\mathrm{R}^{d_i}_f}$ and $\smash{\mathrm{E}^{d_i}_f}$ operate between $\Omega^{d}_{i}$ and $\Omega$. These restriction and extension matrices are transposes of each other and contain only 0 and 1 entries. For details, see \cite{mehmani2025multiscale}. Applying $\mathrm{M_d}$ requires solving $N^d$ ($=\! N^c$) decoupled local problems on dual grids, and applying $\mathrm{M_p}$ requires solving $N^p$ decoupled problems on primary grids. Both operations are embarrassingly parallelizable.

\subsection{Monolithic PNM preconditioner} \label{sec:mono_PNM}
The overall structures of $\mathrm{M_G}$ and $\mathrm{M_L}$ for the monolithic $a$PNM preconditioner are identical to those of the monolithic $a$PLMM. The only difference lies in the definitions of ``primary grid'' and ``dual grid,'' which is reflected in the permutation matrix $\mathrm{W}$ in Eq.\ref{eq:permute}. In $a$PNM, the domain $\Omega$ is decomposed into primary grids $\Omega^p_i$ in a different way, still utilizing watershed segmentation. Unlike $a$PLMM, primary grids in $a$PNM correspond to local geometric constrictions of $\Omega$, and contact interfaces to geometric enlargements. In pore-network modeling parlance, these are the \textit{throats} and \textit{pores} of the void space, respectively. Details of how this decomposition is performed can be found in \cite{mehmani2025multiscale}, but briefly, it involves providing interface pixels identified by $a$PLMM as ``seed points'' to the watershed algorithm. The result is a decomposition like Fig.\ref{fig:main_sketch}d, where contact interfaces are shared by more than two primary grids and are located within geometric enlargements. For example, the contact interface in Fig.\ref{fig:main_sketch}g (yellow line denoted by $\Gamma_{1234}$) is shared among four primary grids: $\Omega^p_1$, $\Omega^p_2$, $\Omega^p_3$, and $\Omega^p_4$. Dual grids are constructed in exactly the same way as before, namely, by morphologically dilating the pixels comprising each contact interface as illustrated by Fig.\ref{fig:main_sketch}e. Unlike $a$PLMM, each column of the matrix $\mathrm{B}$ in Eq.\ref{eq:prolong_1} for $a$PNM can have more than two non-zero shape vectors $\smash{p^{p_i}_{c_j}}$.

\section{Geometric monolithic preconditioner}\label{sec:mono_geom}
As noted in Section \ref{sec:mono_PLMM_MG}, the closure BC $\partial_n\bs{u}\!=\!\bs{0}$ is not imposed exactly at contact interfaces during the reduction step of $\mathrm{M_G}$, except for the Stokes equations. The deviation grows further with increasing Reynolds number, $Re$, resulting in large errors near interfaces \cite{mehmani2025multiscale} after each application of $\mathrm{M_G}$. These errors then convect downstream due to the upwind scheme used for the inertia term in the Navier-Stokes equations. Since the inertia and viscous terms cannot be separated algebraically in the discretized system, exact closure cannot be imposed. In Stokes flow, inertia is absent, so no such separation is needed. In Navier-Stokes flow, exact closure demands geometric information, i.e., access to the code used to assemble the Jacobian matrix in Eq.\ref{eq:Ax=b}. This would allow defining a modified residual $\tilde{r}_w^k$ (instead of $r^k$ in Eq.\ref{eq:Ax=b}) whose Jacobian $\tilde{J}$ is used (in lieu of $J^k$) to build $\mathrm{M_G}$. The modified residual we propose is:
\begin{equation} \label{eq:mod_res}
	\tilde{r}_w^k
	\,
	=
	\,
	\begin{bmatrix}
	\rho\,(\partial_t\bs{u}_k + 
	\bs{w} \cdot \nabla \bs{u}_{k}) - \mu \Delta \bs{u}_{k} + \nabla p_{k} - \rho \bs{b} \\
	\nabla \cdot \bs{u}_{k}
	\end{bmatrix}
\end{equation}
where the inertia term $\bs{u} \cdot \nabla \bs{u}$ is linearized around a given velocity $\bs{w}$, called ``wind.'' To compute $\bs{u}$ (and $p$) at some $Re$, we set $\bs{w}$ to the velocity at a nearby Reynolds number. The closer the $Re$ corresponding to $\bs{w}$ is to the $Re$ at which the solution is sought, the more accurate $\mathrm{M_G}$ will be. If no adjacent $\bs{w}$ is available, the Stokes velocity serves as fallback.

Aside from the linearization in Eq.\ref{eq:mod_res}, we also remove the inertia term entirely from the modified residual of those fine-grid faces that lie within a few layers of each contact interface. Fig.\ref{fig:coarse_sketch} illustrates such faces by tagging them with black dots. The layer-thickness of tagged faces depends on the size of the upwind stencil used for the inertia term (here, three-point). Removing the inertia term ensures that the reduction steps used to build $\mathrm{M_G}$ in Section \ref{sec:mono_PLMM_MG} impose $\partial_n\bs{u}\!=\!\bs{0}$ exactly at interfaces. 
This is highlighted by the Navier-Stokes basis functions depicted in Fig.\ref{fig:basis_sol}, derived from the Jacobian of Eq.\ref{eq:mod_res}. Each basis is a column of the matrix $\mathrm{B}$ in Eq.\ref{eq:prolong_1}, comprised of two (in PLMM) or four (in PNM) shape vectors associated with the yellow-highlighted interfaces. They belong to primary grids in Figs.\ref{fig:main_sketch}f-g and correspond to a high $Re$. For comparison, Fig.\ref{fig:basis_sol} also shows Stokes bases. Both satisfy $\partial_n\bs{u}\!=\!\bs{0}$ at interfaces (yellow).

To summarize, the resulting geometric PLMM and PNM preconditioners, denoted hereafter by $g$PLMM and $g$PNM, are constructed from $\mathrm{\hat{A}}$ set equal to the modified Jacobian $\tilde{J}_w$ of Eq.\ref{eq:mod_res} (where inertia is disabled for near-interface faces). The RHS vector $\hat{b}$ is computed as before from the original (not modified) residual in Eq.\ref{eq:Ax=b} (i.e., $-r^k$). We show in Section \ref{sec:test_prec} that $g$PLMM and $g$PNM perform better than $a$PLMM and $a$PNM. Notice $\tilde{J}_w$ does not depend on the Newton iteration index $k$, meaning $\mathrm{M_G}$ is build once and reused thereafter. While $J^k$, used to build $a$PLMM and $a$PNM, does depend on $k$, we found empirically that building $\mathrm{M_G}$ once and reusing it for all $k$ is more efficient.

\begin{figure} [t!]
  \centering
  \centerline{\includegraphics[scale=0.56,trim={0 0 0 0},clip]{./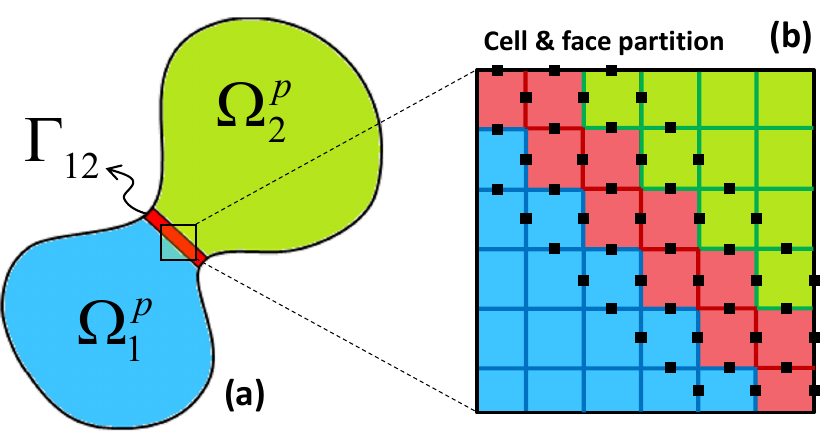}}
  \caption{The inertia ($\bs{u}\cdot\nabla\bs{u}$) term is removed from the modified residual in Eq.\ref{eq:mod_res} of the fine-grid faces that lie within a few layers of each contact interface ($\smash{\Gamma_{12}}$). These faces are tagged with black dots. The operation is required to impose the closure BC $\partial_n\bs{u}\!=\!\bs{0}$ exactly at contact interfaces.}
\label{fig:coarse_sketch}
\end{figure}

\begin{figure} [h!]
  \centering
  \centerline{\includegraphics[scale=0.55,trim={0 0 0 0},clip]{./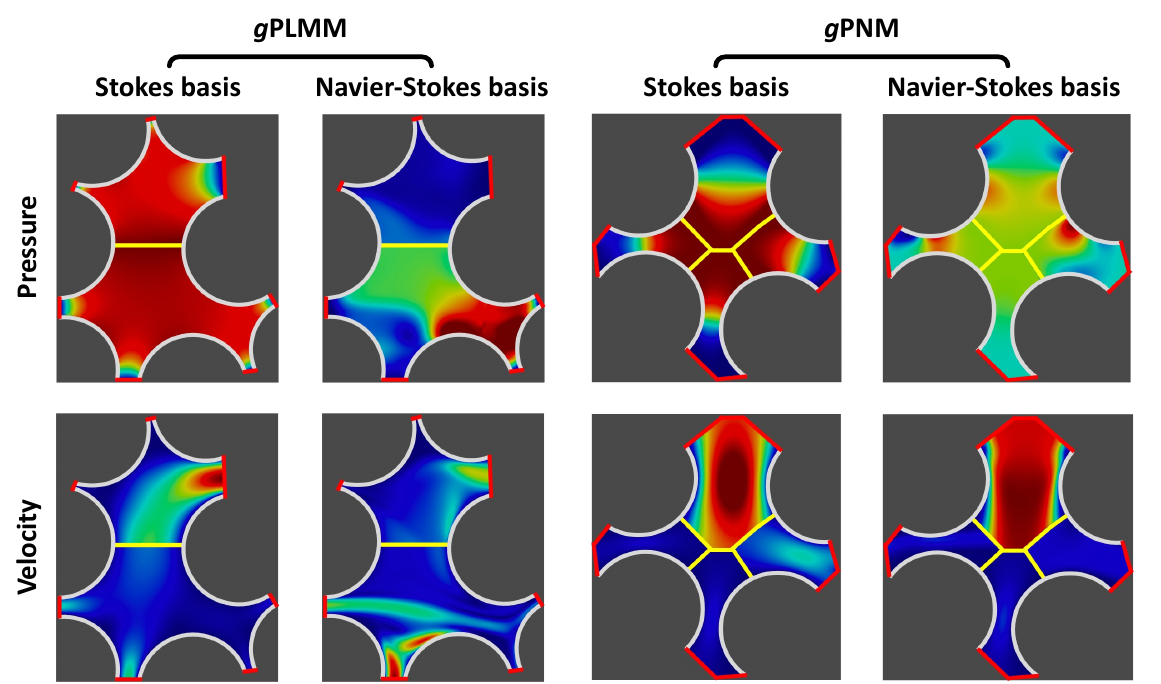}}
  \caption{Basis functions (pressure and velocity magnitude) corresponding to the $\mathrm{M_G}$ of $g$PLMM and $g$PNM. Each basis is a column of the matrix $\mathrm{B}$ in Eq.\ref{eq:prolong_1}, comprised of two (in $g$PLMM) or four (in $g$PNM) shape vectors associated with the yellow-highlighted interface. Navier-Stokes bases are built from the Jacobian of the modified residual in Eq.\ref{eq:mod_res} at $Re\!=\!1.6\times10^4$. The bases belong to primary grids highlighted in Figs.\ref{fig:main_sketch}f-g, with BCs $p\!=\!0$ and $\partial\bs{u}/\partial n\!=\!0$ on all red lines, and $p\!=\!1$ and $\partial\bs{u}/\partial n\!=\!0$ on yellow lines. No-slip BCs hold on the void-solid interface (gray lines).}
\label{fig:basis_sol}
\end{figure}

\section{Coarse-scale Navier-Stokes solver} \label{sec:coarse_solver}
Here, we demonstrate that the global preconditioner $\mathrm{M_G}$ for $g$PLMM and $g$PNM can also be used as standalone coarse-scale solvers to compute approximate solutions to the \textit{steady-state} Navier-Stokes equations (i.e., pre-turbulent regime). This approach results in significant computational speedup over exact solvers while maintaining acceptable accuracy in many applications, provided $\mathrm{M_G}$ corresponds to $g$PLMM (not $g$PNM). Extension to dynamic (i.e., turbulent) flow is beyond our scope and left as future work. For Stokes flow, governed by the linear system $\mathrm{\hat{A}}\hat{x}\!=\!\hat{b}$, we showed in \cite{mehmani2025multiscale} that an accurate approximate solution can be obtained from a single application of $\mathrm{M_G^{-1}}$ in Eq.\ref{eq:iMG}:
\begin{equation} \label{eq:approx_formula}
	\hat{x}_{aprx} = \mathrm{M_G^{-1}} \hat{b}
\end{equation}

However, a single application is insufficient for the steady-state Navier-Stokes equations due to its nonlinearity, requiring an iterative approach. To achieve speedup, we devise a Newton method that performs iterations only on the coarse-scale (pressure) unknowns, without recourse to the fine grid or any fine-scale problems (local or global):
\begin{equation} \label{eq:coarse_scale_sys}
	J_{c}^k \delta_{c}^{k} = - r_{c}^{k}
\end{equation}
The coarse-scale residual and Jacobian are defined as follows:
\begin{subequations} \label{eq:coarse_rJ}
\begin{align}
	r_{c}^{k} &= \hat{\mathrm{R}} \tilde{r}^k \label{eq:coarse_scale_def1} \\
	J_{c}^k &= \hat{\mathrm{R}} 
	           \tilde{J}^k
	           \hat{\mathrm{P}}
	 \label{eq:coarse_scale_def2}
\end{align}
\end{subequations}
where $\tilde{r}^k$ and $\tilde{J}^k$ are the modified residual and Jacobian computed from Eq.\ref{eq:mod_res}, respectively (setting $\partial_t\bs{u}_k\!=\!\bs{0}$). However, the omission of subscript $w$ from both implies that we set $\bs{w}\!=\!\bs{u}_k$ in Eq.\ref{eq:mod_res}. In other words, $\tilde{r}^k$ and $\tilde{J}^k$ are the same as $r^k$ and $J^k$ in Eq.\ref{eq:Ax=b}, except the inertia term is removed from fine-grid faces near contact interfaces (Fig.\ref{fig:coarse_sketch}). $\hat{\mathrm{R}}$ and $\hat{\mathrm{P}}$ are the effective restriction and effective prolongation matrices, respectively; defined in Eq.\ref{eq:iMG_PR}. We build $\hat{\mathrm{R}}$ and $\hat{\mathrm{P}}$ by setting $\hat{\mathrm{A}}$ in Section \ref{sec:mono_PLMM_MG} to $\tilde{J}_w$ for a given wind $\bs{w}$ ($\neq\!\bs{u}_k$) at a nearby $Re$. If no such $\bs{w}$ is available, the Stokes velocity serves as fallback.
To map the course-scale update $\smash{\delta_{c}^{k}}$ onto the fine grid and obtain the approximate fine-scale solution at the next Newton iteration, we execute:
\begin{equation} \label{eq:coarse_scale_up}
	\hat{x}_{aprx}^{k+1} = \hat{x}_{aprx}^{k} + \hat{\mathrm{P}}\, \delta_{c}^k
\end{equation}

Note that $\hat{\mathrm{R}}$ and $\hat{\mathrm{P}}$ do \textit{not} depend on Newton iterations and are built only once. If they were built from $J^k$ in Eq.\ref{eq:Ax=b} instead of $\tilde{J}_w$, coarse iterations would diverge or yield poor approximate solutions. This is because the closure $\partial_n\bs{u}\!=\!\bs{0}$ is only strictly valid at interfaces when $\tilde{J}_w$ is used to construct $\mathrm{M_G}$. Thus, only $\mathrm{M_G}$ from $g$PLMM and $g$PNM can serve as coarse solvers, not $a$PLMM or $a$PNM. We show later that the $\mathrm{M_G}$ of $g$PLMM is far more accurate than $g$PNM.

Algorithm \ref{alg:coarse_scale} summarizes the above steps for the proposed coarse-scale solver of the steady-state Navier-Stokes equations. The algorithm consists of an outer loop that gradually increments $Re$ from zero to a desired maximum, enveloping the coarse-scale Newton solver as an inner loop. Here, $Re$ is incremented by increasing the inlet-boundary pressure while keeping the outlet pressure constant. The outer loop's job is to provide a good initial guess for Newton to converge (iterations diverge if too far from $\hat{x}_{aprx}$). In computing $\tilde{J}_w$, we set $\bs{w}$ to the velocity from the previous $Re$.

\begin{minipage}{.8\linewidth} 
\vspace{1.5em} 
\begin{algorithm}[H] 
\small
\caption{Coarse-scale solver for steady-state Navier-Stokes equations}
\label{alg:coarse_scale}
\begin{algorithmic}
	\State Initialize $\hat{x}_{aprx}=0$ and $\bs{w}=0$
	\State \textbf{Do} $Re = 0$ to $Re_{max}$ with increment $\Delta Re$ 
	                              \qquad\quad\qquad\qquad\qquad\qquad$\;\:$ $\triangleright$ Increment Reynolds number
	\State \quad Set wind $\bs{w}$ in Eq.\ref{eq:mod_res} to velocity from previous $Re$ increment
	\State \quad Build coarse preconditioner $\mathrm{M_G}$ from Jacobian $\tilde{J}_w$ of Eq.\ref{eq:mod_res} \qquad\quad $\triangleright$ This yields $\hat{\mathrm{R}}$ and $\hat{\mathrm{P}}$
	\State \quad Compute coarse-scale residual $r_c^0$ and Jacobian $J_c^0$ from Eq.\ref{eq:coarse_rJ}
	
	\State \quad Initialize Newton to $\hat{x}^0_{aprx} = \hat{x}_{aprx}$ from previous $Re$ increment 
	\State \quad \textbf{Do} $k = 0$ to $k_{max}$
	                                        \hfill $\triangleright$ Coarse-scale Newton iteration
	
	\State \quad \quad Compute coarse-scale Newton update $\delta_{c}^{k}$ by solving Eq.\ref{eq:coarse_scale_sys}
	\State \quad \quad Update fine-scale approximate solution $\hat{x}^{k+1}_{aprx}$ using Eq.\ref{eq:coarse_scale_up}
	\State \quad \quad Update coarse-scale residual $r_c^{k+1}$ and Jacobian $J_c^{k+1}$ from Eq.\ref{eq:coarse_rJ}
	\State \quad \quad \textbf{If} $\|r_c^{k+1}\| \,/\, \|r_c^{0}\| < 10^{-6}$, \textbf{exit}
	
	\State \quad \textbf{End do}
	\State \quad Set $\hat{x}_{aprx}$ to converged approximate solution
	\State \textbf{End do}
\end{algorithmic}
\end{algorithm}
\end{minipage}
\\
\\

\noindent\textbf{Remark 2}. For the linear Stokes equations, Algorithm \ref{alg:coarse_scale} converges in only one Newton iteration and the solution equals Eq.\ref{eq:approx_formula} but with $\mathrm{M_G^{-1}}\!=\!\mathrm{\hat{P}(\hat{R}\hat{A}\hat{P})^{-1}\hat{R}}$ (not Eq.\ref{eq:iMG}). Another way to formulate the coarse-scale Jacobian given by Eq.\ref{eq:coarse_scale_def2} would have been $J_c^k\!=\!\mathrm{W}\mathcal{C}(\mathrm{W}^\top\tilde{J}^k\mathrm{W})\mathrm{W}^\top$, but the presence of the column-sum operator $\mathcal{C}(\cdot)$ prevents Newton from converging in only one step, which is why we opted for the form given by Eq.\ref{eq:coarse_scale_def2}. The simple proof of this fact involves tracing one Newton step of Algorithm \ref{alg:coarse_scale}, revealing that $(\mathrm{\hat{R}}\tilde{J}^k\mathrm{\hat{P}})^{-1}J_c^k\!=\!\mathrm{I}$ must hold for one-step convergence.

\section{Problem set} \label{sec:problem_set}
In the following sections, we first test the coarse-scale solver in Algorithm \ref{alg:coarse_scale} against the exact solution of the steady-state Navier-Stokes equations for the 2D and 3D porous geometries in Fig.\ref{fig:domains}. Both $g$PLMM and $g$PNM are considered as the basis for building $\mathrm{M_G}$ for the solver. The domains include a monodisperse and spatially disordered disk pack \cite{mehmani2018mult} (GLD4), a polydisperse and spatially disordered disk pack \cite{mehmani2018mult} (PD4), a 2D representation of Berea sandstone \cite{boek2010lberea} (Berea), and a 3D Gyroid whose void-solid interface $\Gamma_w$ is described by the following equation \cite{schoen1970gyroid}:

\begin{equation}
	\sin(12x)\cos(12y) + \sin(12y)\cos(12z) + \sin(12z)\cos(12x) = 0 
\end{equation}

After gaining some intuition into the accuracy of $g$PLMM and $g$PNM as coarse-scale solvers, we test the algebraic ($a$PLMM and $a$PNM) and geometric ($g$PLMM and $g$PNM) monolithic preconditioners proposed for both steady-state and unsteady forms of the Navier-Stokes equations. The test domains include GLD4, PD4, Berea, and a 3D packing of non-spherical grains \cite{mehmani2025multiscale} (Granular). Disk positions in PD4 and GLD4 are identical, differing only in the disk sizes. We benchmark $a$PLMM, $a$PNM, $g$PLMM and $g$PNM against each other and block-triangular preconditioners $b$AMG, $b$PLMM, and $b$PNM. The latter were described in Section \ref{sec:intro} and detailed in \cite{mehmani2025multiscale}. We utilize the \textit{scaled}-BFBt approximation of the Schur-complement matrix in Eq.\ref{eq:BFBt_scaled}, which exhibits robust performance in complex geometries \cite{mehmani2025multiscale}. The difference between $b$AMG, $b$PLMM, and $b$PNM lies in the choice of prolongation (transposition yields restriction) matrices used to precondition the elliptic sub-systems arising from the Schur-complement approximation. $b$PLMM and $b$PNM are built from the same shape and correction vectors used to formulate $a$PLMM and $a$PNM. To build $b$AMG, we use the \texttt{amg.m} code from the iFEM GitHub repository \cite{Chen2008ifem}, with settings identical to those in \cite{mehmani2025multiscale}.

Each domain is decomposed into primary and dual grids corresponding to the randomly colored regions in Fig.\ref{fig:domain_decomp}. These are used by $a$PLMM, $a$PNM, $g$PLMM, $g$PNM, $b$PLMM, and $b$PNM. Table \ref{tab:domains} summarizes each domain's dimensions and fine/coarse-grid information, including the number of cell-pressure unknowns ($n_c$), face-velocity unknowns ($n_f$), primary grids ($N^p$), and dual grids ($N^d\!=\!N^c$) for both PLMM and PNM. The $\smash{N^p}$ for PLMM roughly equals the $\smash{N^c}$ for PNM because primary grids in PLMM correspond to contact interfaces (or dual grids) in PNM and vice versa. 

To compute Reynolds number $Re\!=\!\rho U l/\mu$, we define the characteristic velocity as the interstitial velocity $U\!=\!U_D/\phi$, where $U_D$ is the Darcy velocity ($=$ flowrate divided by cross-sectional area) and $\phi$ is porosity. The characteristic length is $l\! =\! V_s / A_w$, where $V_s$ is the total solid volume and $A_w$ is the void-solid interfacial area $|\Gamma_w|$. In all simulations, we set the fluid density to $\rho\!=\!1$ g/cc and the dynamic viscosity to $\mu\!=\!2 \times 10^{-4}$ Poise for PD4, $1.2 \times 10^{-4}$ Poise for GLD4, $1 \times 10^{-4}$ Poise for Berea, and $1 \times 10^{-3}$ Poise for Granular and Gyroid. The BCs imposed were described in Section \ref{sec:problem}, which include setting the inlet pressure $p_{in}$ at the left boundary and the outlet pressure $p_{out}$ at the right. 

To validate our fine-grid Navier-Stokes solver, we simulate the classic problem of flow around a circular or square obstacles at various Reynolds numbers in \ref{sec:appendix_a}. These simulations show that our solver produces accurate drag coefficients compared to literature values, with comparable pressure, velocity-magnitude, and vorticity fields.

\begin{figure} [t!]
  \centering
  \centerline{\includegraphics[scale=0.4, trim={0cm 0.0cm 0cm 0cm},clip]{./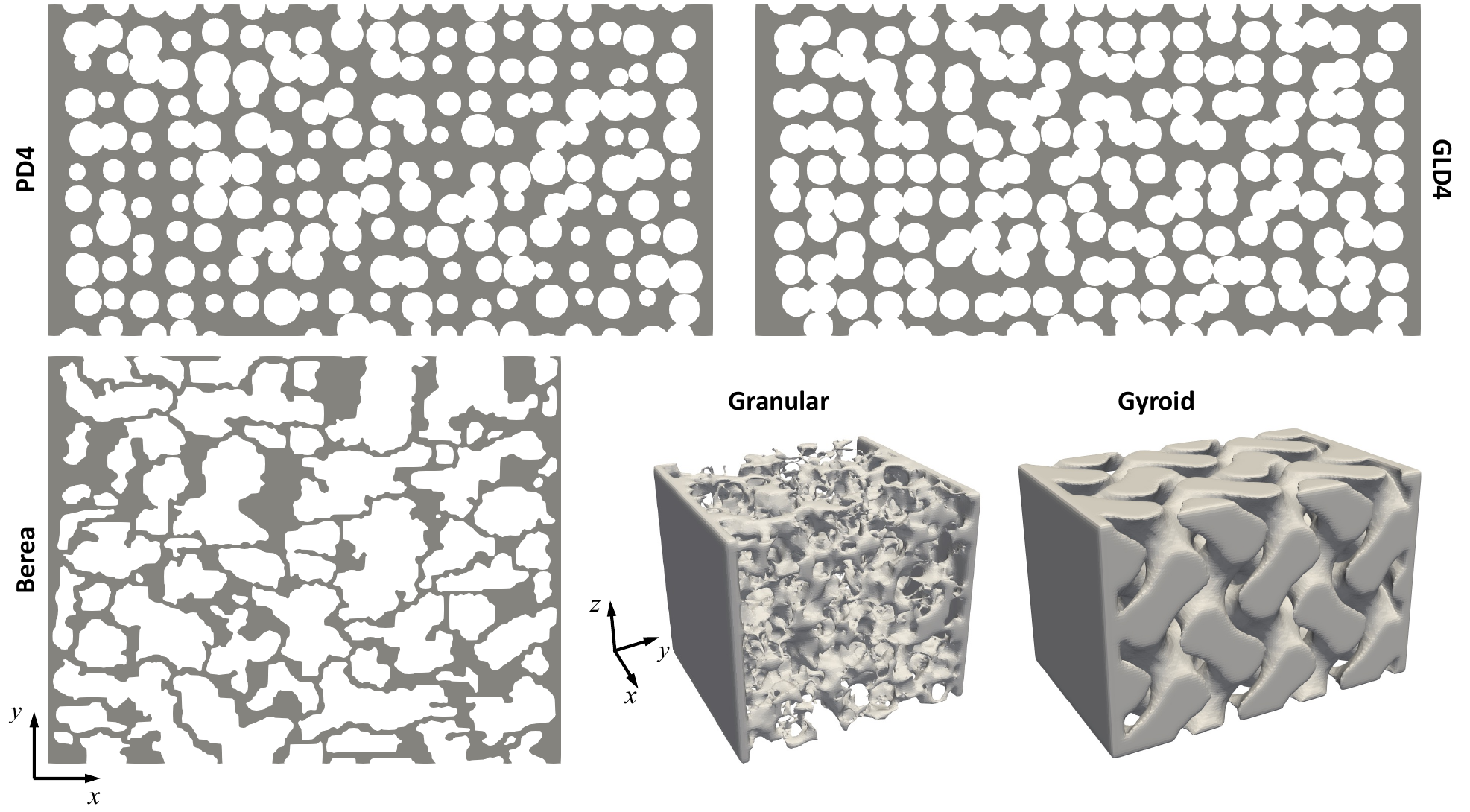}}
  \caption{Schematic of 2D and 3D porous geometries considered for testing the proposed preconditioners and the coarse-scale solver.}
\label{fig:domains}
\end{figure}

\begin{figure} [t!]
  \centering
  \centerline{\includegraphics[scale=0.38, trim={0cm 0.0cm 0cm 0cm},clip]{./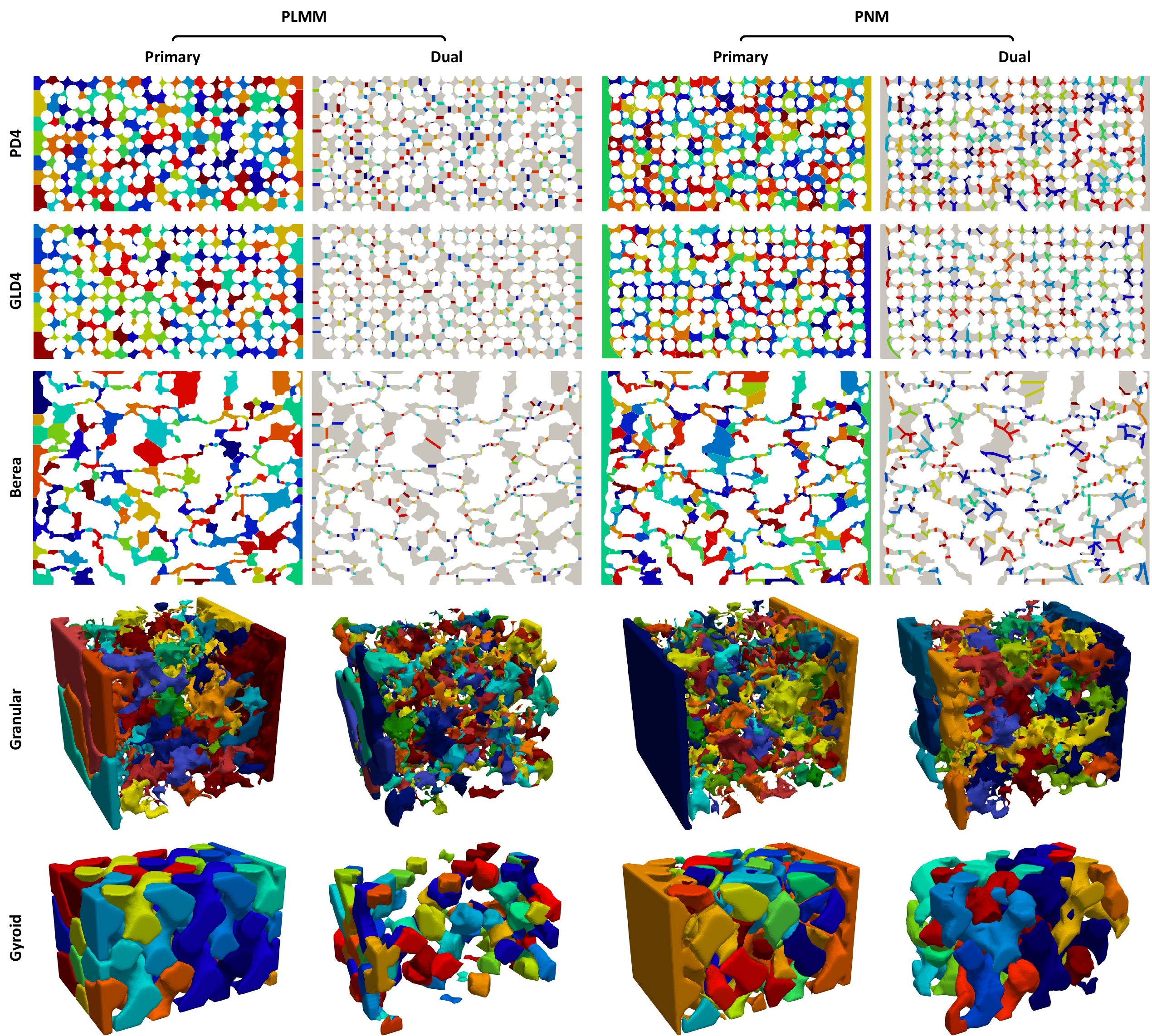}}
  \caption{Decomposition of the porous geometries shown in Fig.\ref{fig:domains} into primary and dual grids used by the coarse-scale solver and the monolithic and block preconditioners considered herein. Each primary grid and dual grid is illustrated by a randomly colored region.}
\label{fig:domain_decomp}
\end{figure}

\begin{table}[b!]\centering
	\caption{Geometric and fine/coarse-grid information of the domains in Fig.\ref{fig:domains}. They include the domain size, number of cell-pressure unknowns ($n_c$), number of face-velocity unknowns ($n_f$), and number of primary ($N^p$) and dual ($N^d\!=\!N^c$) grids for PLMM- and PNM-based preconditioners.}
	{\small
  	\begin{tabular}{ P{1.5cm} | P{2.4cm} | P{1.8cm} | P{1.8cm} | P{1.8cm} | P{1.8cm} | P{1.8cm} | P{1.8cm}}
  	\cline{2-8}
{} & Domain size (cm) & $n_c$ & $n_f$ & $N^p$, PLMM & $N^c$, PLMM & $N^p$, PNM & \multicolumn{1}{|c|}{$\;\;\;$ $N^c$, PNM$\;\;\;$}  \\
	\hline
	\rowcolor[gray]{0.9}\multicolumn{1}{|c|}{PD4} 
	     & 2 $\times$ 1 &  610,040 & 1,230,924 & 174 & 294 &  291 & \multicolumn{1}{|c|}{172} \\
	\hline
	\multicolumn{1}{|c|}{GLD4} 
	     & 2 $\times$ 1 &  717,097 & 1,447,556 & 186 & 257 &  260 & \multicolumn{1}{|c|}{178} \\
	\hline
	\rowcolor[gray]{0.9}\multicolumn{1}{|c|}{Berea}
	     & 1.45 $\times$ 1.15 &  931,396 & 1,877,760 & 203 & 255 &  302 & \multicolumn{1}{|c|}{192} \\
	\hline
	\multicolumn{1}{|c|}{Granular} 
	     & 0.9 $\times$ 1 $\times$ 0.9 & 375,953 & 1,265,199 & 99 & 690 &  647 & \multicolumn{1}{|c|}{96} \\
	\hline
	\rowcolor[gray]{0.9}\multicolumn{1}{|c|}{Gyroid}
	     & 1$\times$ 1.5 $\times$ 1 &  616,400 & 1,947,468 & 21 & 103 &  109 & \multicolumn{1}{|c|}{16} \\
	\hline
 	\end{tabular}
	}
	\label{tab:domains}
\end{table}

\section{Testing the coarse-scale solver} \label{sec:test_coarse}
We test the accuracy of the coarse-scale solver outlined in Algorithm \ref{alg:coarse_scale} for the steady-state Navier-Stokes equations by comparing the pressure and velocity magnitudes associated with the approximate solution $\hat{x}_{aprx}$ produced to that of the exact solution. To increment $Re$ in Algorithm \ref{alg:coarse_scale}, we progressively increase the inlet pressure $p_{in}$ by 10\% of its previous value while keeping the outlet pressure $p_{out}$ fixed at zero. The initial value of $p_{in}$ is set to $10^{-3}$ dynes/cm\textsuperscript{2}. We increment $Re$ in this way until the coarse-scale Newton loop ceases to converge within 50 iterations, which we interpret as the onset (or vicinity) of turbulent instabilities beyond which a steady-state solution is difficult to compute. Recall Algorithm \ref{alg:coarse_scale} uses the converged solution from the previous $Re$ increment as the initial guess for Newton in the current $Re$ increment. In addition to pressure and velocity fields, which are fine-scale information, we compute the permeability $K$ of each domain versus $Re$, which is a bulk property:
\begin{equation} \label{eq:perm}
	K = \frac{\mu L U_D}{\Delta p} 
\end{equation}
The $U_D$, $\Delta p$, and $L$ denote Darcy velocity, $p_{in}-p_{out}$, and inlet-to-outlet distance of the domain, respectively.

\begin{figure} [b!]
  \hspace{-0.5cm}
  \centerline{\includegraphics[scale=0.41, trim={0cm 0.0cm 0cm 0cm},clip]{./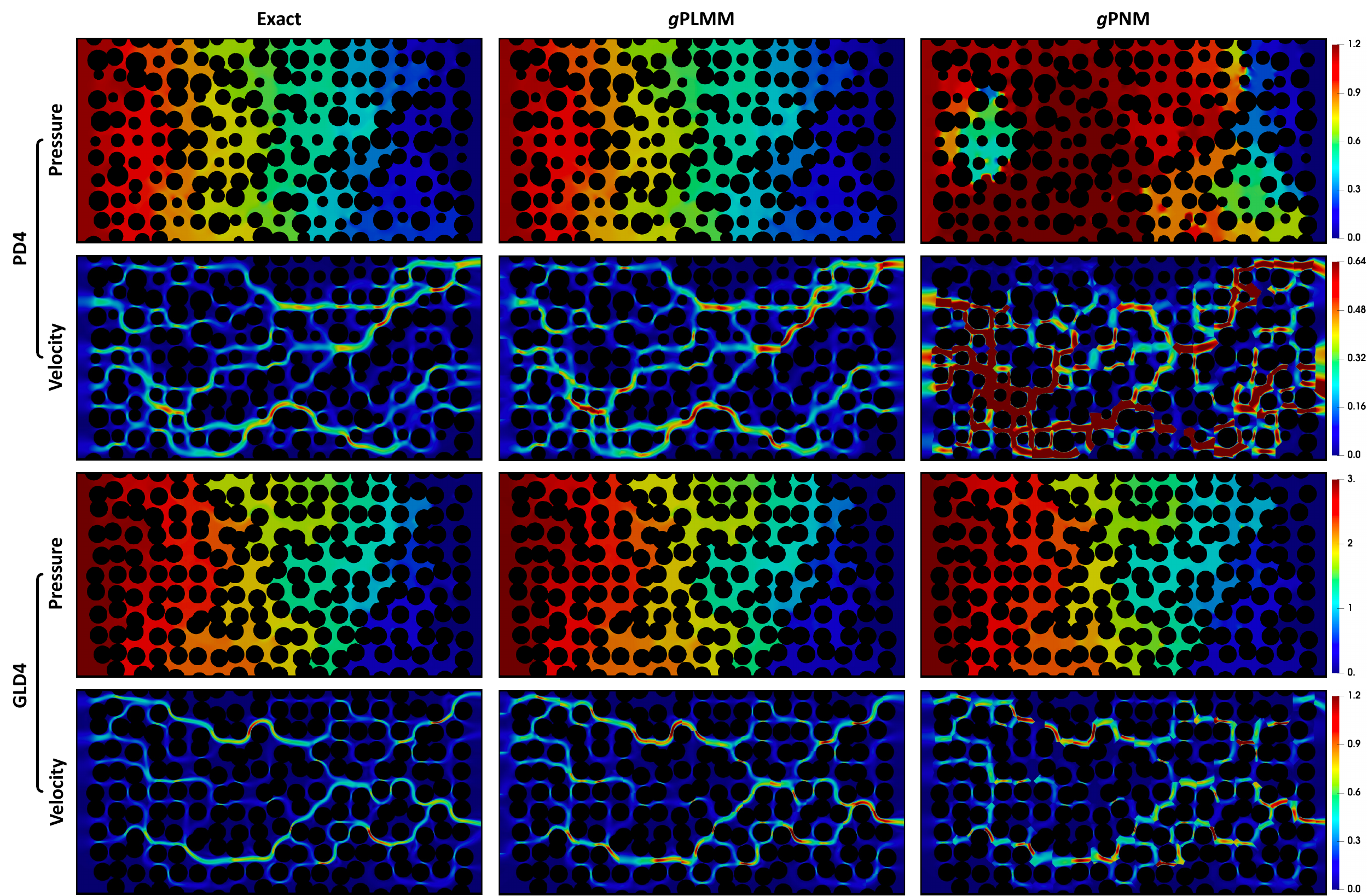}}
  \caption{Comparison of approximate pressure and velocity-magnitude fields obtained from the coarse-scale solver in Algorithm \ref{alg:coarse_scale} against exact solutions. Results correspond to the \textit{steady-state Navier-Stokes equations} on PD4 at $Re$ = $3.1\times 10^3$ and GLD4 at $Re$ = $1.1\times10^4$.}
\label{fig:coarse_sol1}
\end{figure}

\begin{figure} [t!]
  \hspace{-0.5cm}
  \centerline{\includegraphics[scale=0.42, trim={0cm 0.0cm 0cm 0cm},clip]{./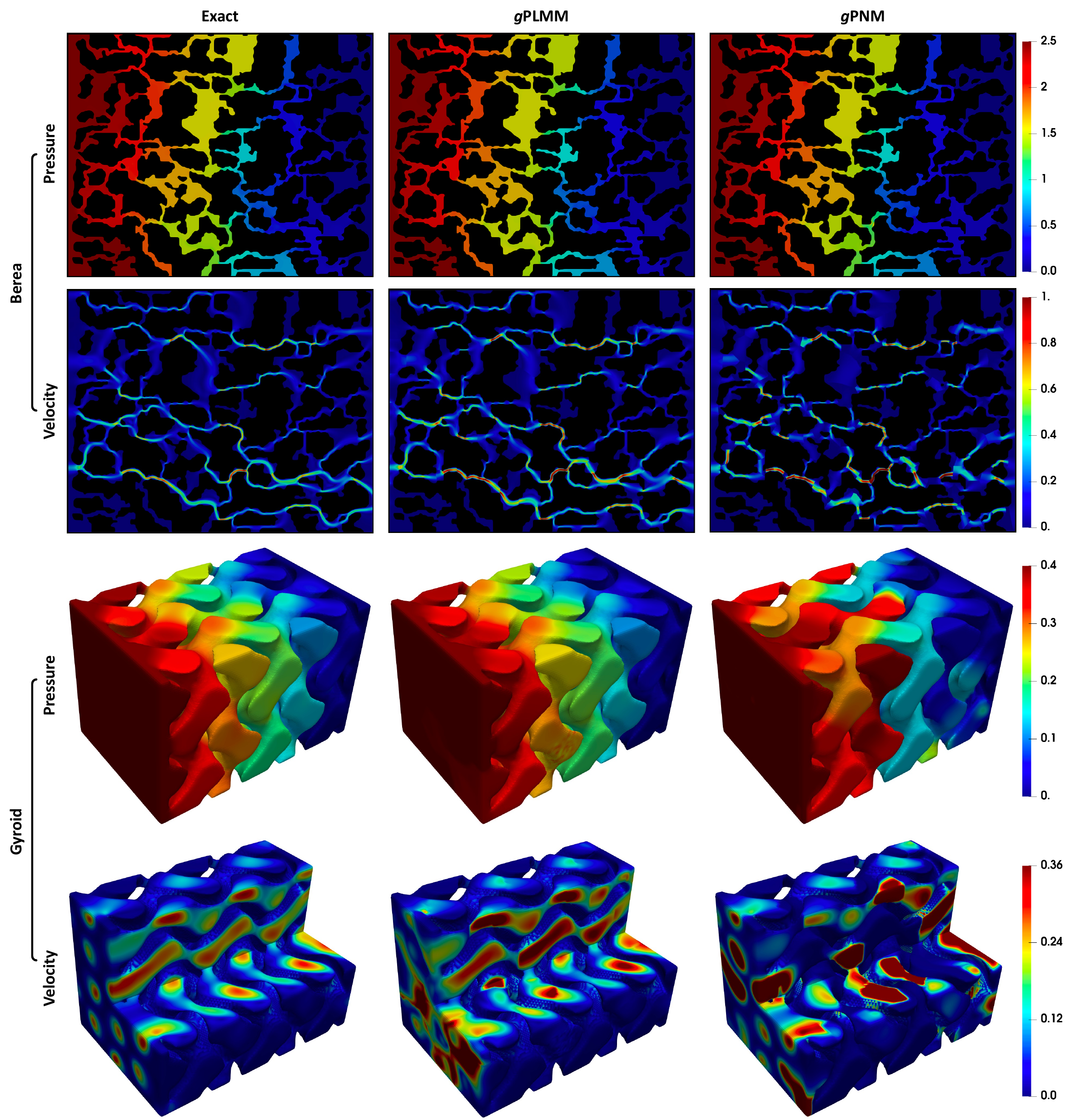}}
  \caption{Comparison of approximate pressure and velocity-magnitude fields obtained from the coarse-scale solver in Algorithm \ref{alg:coarse_scale} against exact solutions. Results correspond to the \textit{steady-state Navier-Stokes equations} on Berea at $Re$ = $1.6\times10^4$ and Gyroid at $Re$ = $4.7\times10^3$.}
\label{fig:coarse_sol2}
\end{figure}

Figs.\ref{fig:coarse_sol1}--\ref{fig:coarse_sol2} compare approximate solutions from the coarse-scale solver to exact solutions at the highest $Re$ reachable by the coarse-scale solver. Results are shown for the PD4, GLD4, Berea and Gyroid domains. Approximate solutions from both $g$PLMM and $g$PNM are included, where the corresponding $\mathrm{M_G}$ (or equivalently $\mathrm{\hat{R}}$ and $\mathrm{\hat{R}}$) is utilized in Algorithm \ref{alg:coarse_scale}. We observe very good agreement between approximate pressure and velocity-magnitudes from $g$PLMM versus the exact solutions. Approximate solutions via $g$PNM are significantly inferior in accurately capturing the flow field. These observations are confirmed further by Fig.\ref{fig:coarse_plot}, where comparisons of $K$ versus $Re$ are presented.

To quantify errors in the approximate solutions, we use the following metrics by \cite{mehmani2018mult, mehmani2025multiscale}:
\begin{equation} \label{eq:error_L2}
E^\xi_2 \equiv \left(\frac{1}{|\Omega|} \int_{\Omega} \left( E^\xi_p \right)^2 d\Omega \right)^{1/2}
\qquad
E^\xi_p \equiv \frac{|\,\xi_{t} - \xi_{c}|}{\mathrm{sup}_{\Omega} |\,\xi_{t}|} \times 100
\qquad
E_K \equiv \frac{|\,K_t - K_c\,|}{|\,K_t\,|} \times 100
\end{equation}
where $\xi$ is a placeholder for either pressure, $p$, or velocity magnitude, $|\bs{u}|$. Subscripts $c$ and $t$ denote the approximate solution obtained from the coarse-scale solver and the ``true'' (or exact) solution, respectively. From left to right, the metrics in Eq.\ref{eq:error_L2} yield the $L_2$, pointwise, and permeability errors; all expressed in percentages of the true value.

\begin{figure} [t!]
  \centering
  \centerline{\includegraphics[scale=0.42, trim={0cm 0.0cm 0cm 0cm},clip]{./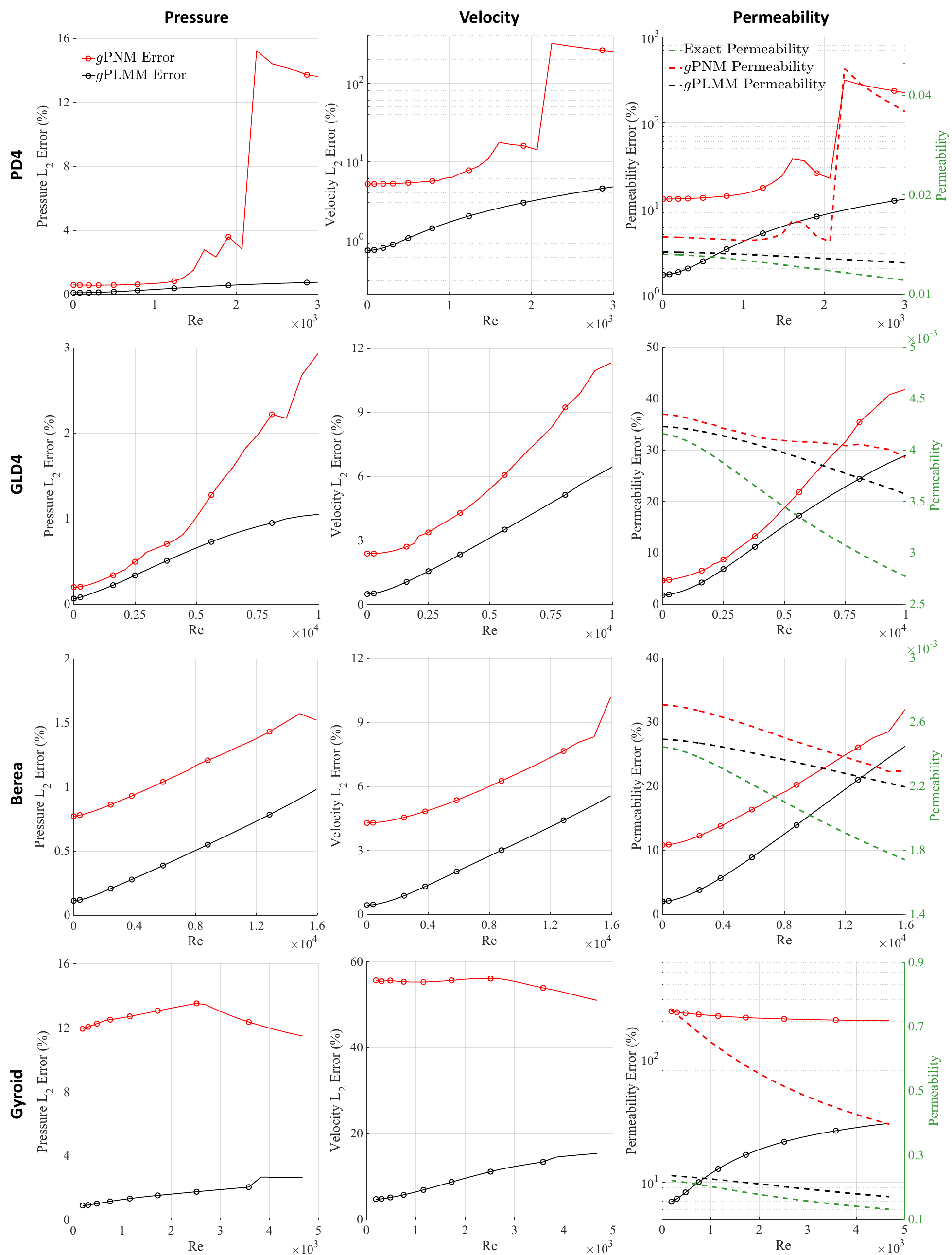}}
  \caption{Errors in pressure ($E^p_2$), velocity magnitude ($E^{|\bs{u}|}_2$), and permeability ($E_K$) versus Reynolds number ($Re$) for approximate solutions from the coarse-scale solver in Algorithm \ref{alg:coarse_scale} based on $g$PLMM and $g$PNM. Results correspond to the PD4, GLD4, Berea, and Gyroid domains. The approximate and exact permeabilities ($K$) are plotted with dashed lines on the right. The former overestimates $K$ due to vortex suppression.}
\label{fig:coarse_plot}
\end{figure}

Fig.\ref{fig:coarse_plot} plots $E^p_2$, $E^{|\bs{u}|}_2$, and $E_K$ versus $Re$ for all domains, where we see these errors increase with Reynolds number. Moreover, errors associated with $g$PNM are much larger (up to 20 times) than those with $g$PLMM. For the coarse-solver based on $g$PLMM, errors are bounded as follows for all domains: $E^p_2\!<\!3$\%, $E^{|\bs{u}|}_2\!<\!6$\% in 2D and $<\!18$\% in 3D, and $E_K\!<\!30$\%, which are acceptable in many engineering applications. We note the approximate velocity field of the coarse-scale solver is conservative in a ``weak'' sense, i.e., integrated flux along each interface is continuous. Fig.\ref{fig:coarse_err} depicts pointwise errors of the pressure and velocity-magnitude in Figs.\ref{fig:coarse_sol1}--\ref{fig:coarse_sol2} for the PD4 and Gyroid domains, which confirm the observations just described. $g$PLMM errors are lower than $g$PNM, and under $1$\% for much of the domain. Other domains behave similarly and omitted for brevity. Unlike Stokes flow, where errors localize more strongly near contact interfaces \cite{mehmani2025multiscale} (high-frequency), Navier-Stokes exhibits errors that are comparatively global (low-frequency).

An interesting observation from Fig.\ref{fig:coarse_plot} is that the permeability predicted by the $g$PLMM coarse-scale solver is always larger than the true permeability computed from the exact solution. This is because the imposition of the closure BC $\partial_n\bs{u}\!=\!\bs{0}$ suppresses vortex formation, which is a key mechanism for dissipating energy, hence reducing flowrate. This is illustrated by Fig.\ref{fig:coarse_streamline}, where streamlines at low and high $Re$ are drawn inside a portion of the domain shown in Fig.\ref{fig:main_sketch}a. The suppression of vortices is, incidentally, the reason why errors in Fig.\ref{fig:coarse_plot} grow with $Re$.

\begin{figure} [h!]
  \centering
  \centerline{\includegraphics[scale=0.39, trim={0cm 0.0cm 0cm 0cm},clip]{./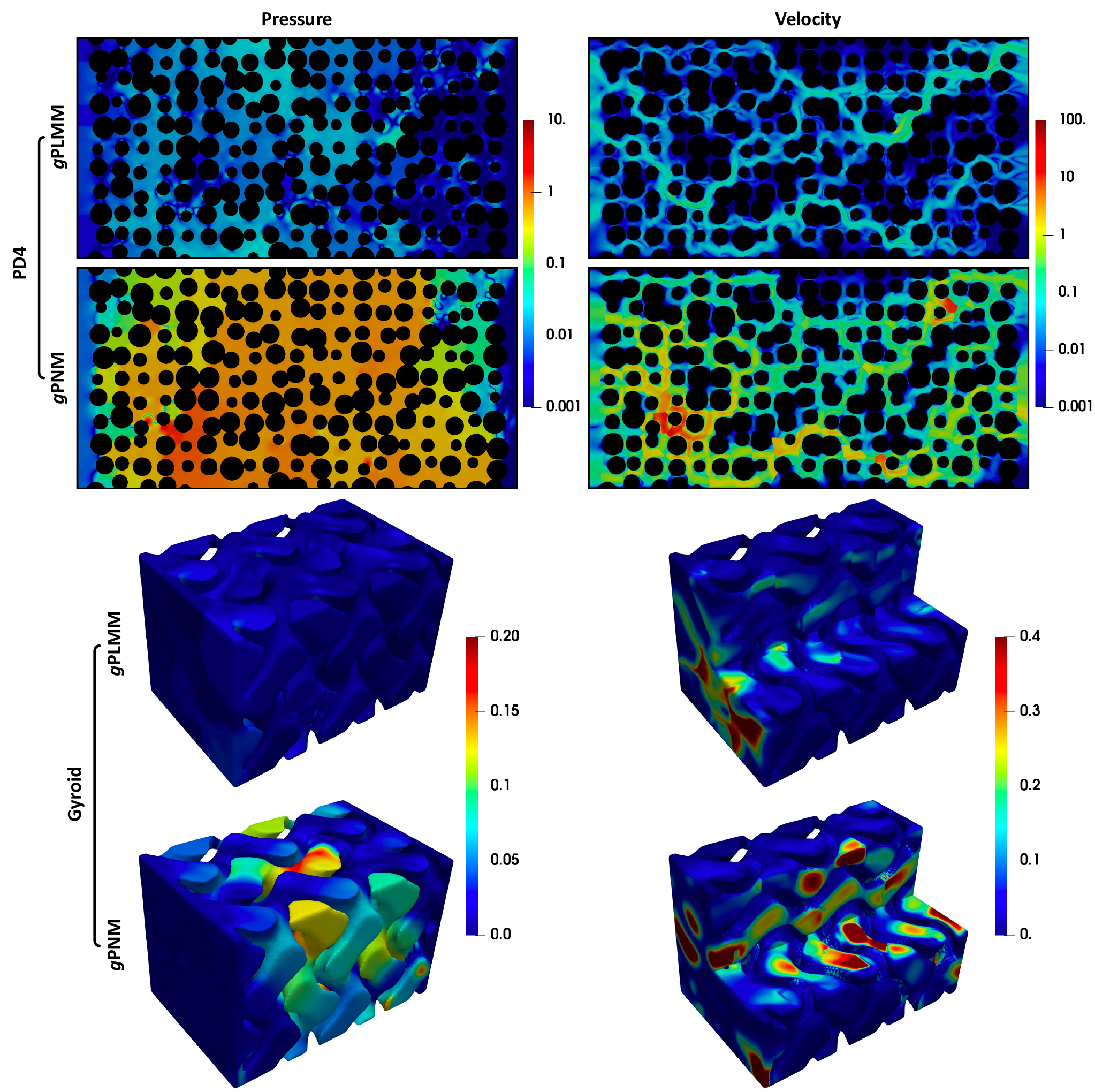}}
  \caption{Pointwise errors in pressure ($E^p_p$) and velocity magnitude ($E^{|\bs{u}|}_p$) for approximate solutions from the coarse-scale solver in Algorithm \ref{alg:coarse_scale} based on $g$PLMM and $g$PNM. Results correspond to PD4 at $Re$ = $3.1\times 10^3$ and Gyroid at $Re$ = $4.7\times10^3$, whose field plots are shown in Figs.\ref{fig:coarse_sol1}--\ref{fig:coarse_sol2}.}
\label{fig:coarse_err}
\end{figure}

\begin{figure} [h!]
  \centering
  \centerline{\includegraphics[scale=0.43, trim={0cm 0.0cm 0cm 0cm},clip]{./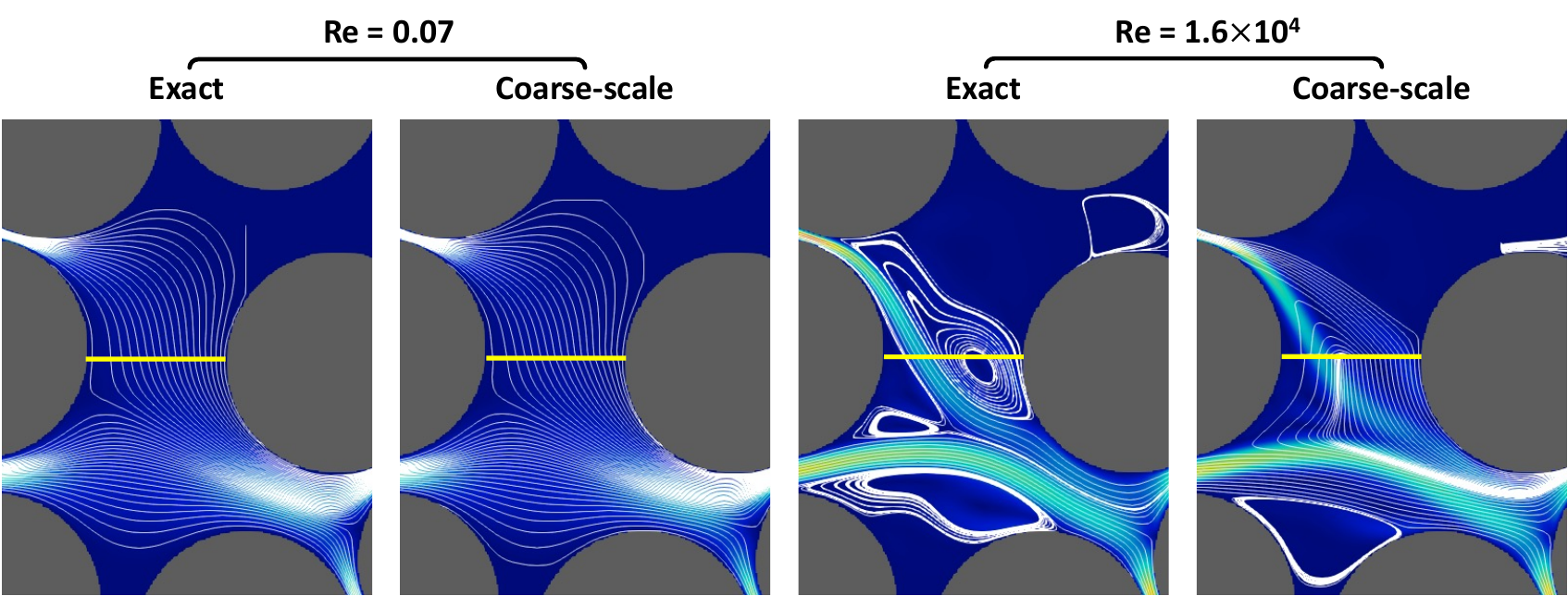}}
  \caption{Schematic of flow streamlines (white lines) near a contact interface (yellow line). The velocity field is computed from the $g$PLMM coarse-scale solver and the exact solution. Results are shown for low and high $Re$ corresponding to a portion of the domain in Fig. \ref{fig:main_sketch}.}
\label{fig:coarse_streamline}
\end{figure}

\section{Testing the preconditioners} \label{sec:test_prec}
We apply all preconditioners, including $a$PLMM/$a$PNM, $g$PLMM/$g$PNM, $b$PLMM/$b$PNM, and $b$AMG, within right-preconditioned GMRES. We declare convergence when the normalized residual satisfies $\| \hat{A}\hat{x} - \hat{b} \| / \|\hat{b}_{0}\|  \!<\! 10^{-9}$, where $\hat{b}_{0}$ is the residual of the steady-state (or time-dependent if velocity at prior time step is set to zero) Navier-Stokes equations at the origin, i.e., $r(\bs{u} = \bs{0}, p = 0)$. This captures the magnitude of nonhomogeneous BCs. We use $\|\hat{b}_{0}\|\!=\!\|r^0 \|$ for normalization instead of $\| \hat{b} \|\!=\!\| r^k\|$ because the latter decreases with Newton iterations, leading to excess GMRES iterations at large $k$. We run simulations in series on an Intel\textsuperscript{\textregistered} Xeon\textsuperscript{\textregistered} Gold 6248R CPU @ 3.00 GHz machine.

\subsection{Steady-state Navier-Stokes equations} \label{sec:test_prec_st}
Similar to Section \ref{sec:test_coarse}, we increment $Re$ by progressively increasing the inlet pressure $p_{in}$ from 1 dynes/cm\textsuperscript{2} using constant increments of $\Delta p_{in}\!=\!1$ for PD4, 0.5 for GLD4 and Berea, and 2 for the Granular domain, while keeping the outlet pressure at $p_{out}\!=\!0$. Unlike Section \ref{sec:test_coarse}, fine-scale (not coarse-scale) solutions are computed at each $Re$ by performing Newton iterations on the fine grid, and GMRES iterations at each Newton iteration. The initial Newton guess at any given $Re$ is the solution from the previous $Re$ increment. To probe the necessity of rebuilding the $a$PLMM/$a$PNM, $g$PLMM/$g$PNM, and $b$PLMM/$b$PNM preconditioners as $Re$ evolves, we test two variants of each: (1) \textit{Stokes-based}, where the preconditioners are built from the Stokes equations once and reused across all subsequent $Re$; and (2) \textit{Navier-Stokes-based}, where the preconditioners are rebuilt from scratch at the start of every $Re$ increment; specifically, before the Newton loop is entered, but not during. Following the notation in Section \ref{sec:intro}, we denote Stokes-based preconditioners with subscript ``S'' and Navier-Stokes-based preconditioners with ``NS.'' For example, $b\mathrm{PLMM_S}$ is the block PLMM preconditioner based on the Stokes equations, and $a\mathrm{PNM_{NS}}$ is the algebraic monolithic PNM preconditioner that is continuously updated based on the Navier-Stokes equations. Since $g\mathrm{PLMM_{S}}$=$a\mathrm{PLMM_{S}}$ and $g\mathrm{PNM_{S}}$=$a\mathrm{PNM_{S}}$, we use $g\mathrm{PLMM}$ and $g\mathrm{PNM}$ (no subscript) to denote Navier-Stokes-based preconditioners.

\begin{figure} [t!]
  \centering
  \centerline{\includegraphics[scale=0.39, trim={0cm 0.0cm 0cm 0cm},clip]{./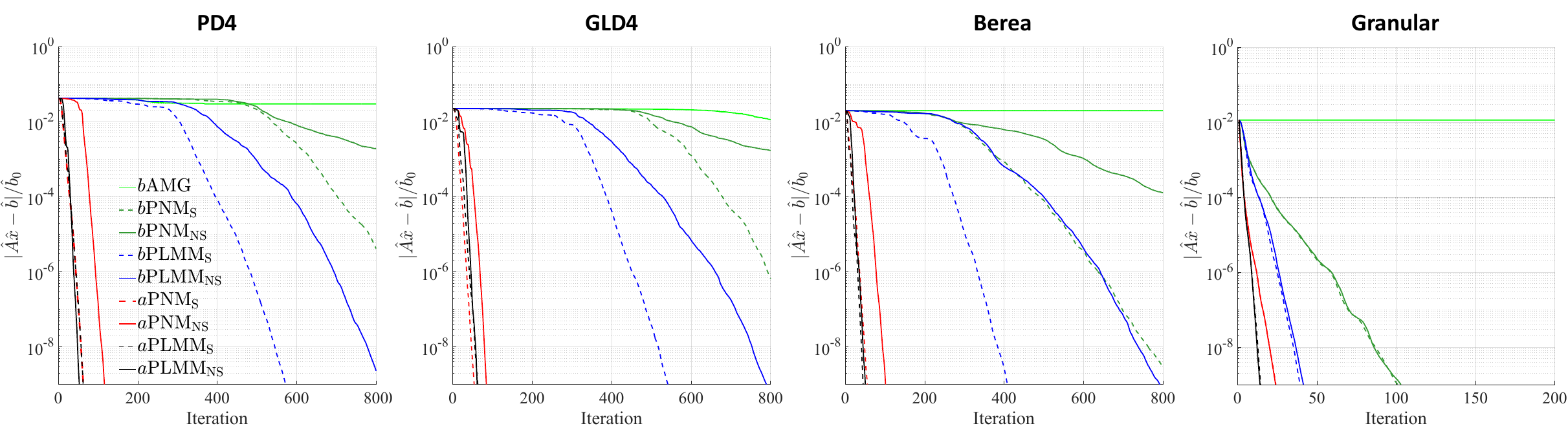}}
  \caption{Normalized linear-system residual versus GMRES iterations for the first Newton step of the first $Re$ increment ($Re_1$ = 9.0$\times 10^3$). Results correspond to different block and monolithic preconditioners applied to the \textit{steady-state Navier-Stokes equations} on all domains.}
\label{fig:qa_result_conv}
\end{figure}
 
\begin{table}[b!]\centering
	\caption{Total wall-clock times (seconds) associated with the first three $Re$ increments in solving the steady-state Navier-Stokes equations on the PD4 domain. They include the costs of constructing $\mathrm{M_G}$, constructing $\mathrm{M_L}$, and the self-time of GMRES for different preconditioners in GMRES.}
	{\small
  	\begin{tabular}{ P{2.2cm} | P{1.8cm} | P{1.8cm} | P{1.8cm} | P{1.8cm} | P{1.8cm} | P{1.8cm} | P{1.8cm} | P{1.8cm} |}
  	\cline{2-9}
{} & $b\mathrm{PNM_S}$ & $b\mathrm{PNM_{NS}}$  & $b\mathrm{PLMM_S}$  & $b\mathrm{PLMM_{NS}}$  & $a\mathrm{PNM_S}$  & $a\mathrm{PNM_{NS}}$ & $a\mathrm{PLMM_S}$ & $a\mathrm{PLMM_{NS}}$ \\
	\hline
	\rowcolor[gray]{0.9}\multicolumn{1}{|c|}{$Re_1$ = 9.0$\times 10^3$} 
	     & 40,088 & 87,345 & 22,295 &  44,478 & 658 & 1,158 & 566 & 758 \\
	\hline
	\multicolumn{1}{|c|}{$Re_2$ = 1.2$\times 10^4$} 
	     & 48,796 & 110,934 & 31,011 & 65,789 & 674 & 1,800 & 563 & 741 \\
	\hline
	\rowcolor[gray]{0.9}\multicolumn{1}{|c|}{$Re_3$ = 1.4$\times 10^4$}
	     & 62,347 & 137,529 & 49,125 & 102,886 & 824 & 2,405 & 643 & 769\\
	\hline
 	\end{tabular}
	}
	\label{tab:pd4_time}
\end{table}

First, let us demonstrate that block preconditioners are disqualified from further consideration due to their severely poor performance. Fig.\ref{fig:qa_result_conv} compares the GMRES convergence observed for the monolithic and block preconditioners in the first Newton iteration of the first $Re$ increment. This is the simplest system to solve, as higher $Re$ poses further challenges to GMRES (shown later). Compared to $a$PLMM and $a$PNM, the block preconditioners $b$PLMM, $b$PNM, and $b$AMG are extremely slow to converge with long periods of initial stagnation. For $b$AMG, stagnation is permanent. By contrast, monolithic preconditioners converge robustly and at least an order magnitude faster. This observation is insensitive to whether block preconditioners are Stokes-based or Navier-Stokes-based. In \cite{mehmani2025multiscale}, the authors observed better performance by $b$PLMM and $b$PNM for the Stokes equations than what is observed here for the Navier-Stokes equations. However, even there, $a$PLMM and $a$PNM were superior, and $b$AMG was prone to stagnation, consistent with Fig.\ref{fig:qa_result_conv}. Table \ref{tab:pd4_time} lists the total wall-clock times (WCTs) associated with the first three $Re$ increments of the PD4 domain, using different preconditioners. The WCTs include contributions from the costs of building $\mathrm{M_G}$, building $\mathrm{M_L}$, and the self-time of GMRES. We see that block preconditioners take roughly 1-2 orders of magnitude longer than the monolithic preconditioners. Hence, we exclude $b$PLMM, $b$PNM, and $b$AMG from all subsequent analysis.

\begin{figure} [b!]
  \centering
  \centerline{\includegraphics[scale=0.39, trim={0cm 0.0cm 0cm 0cm},clip]{./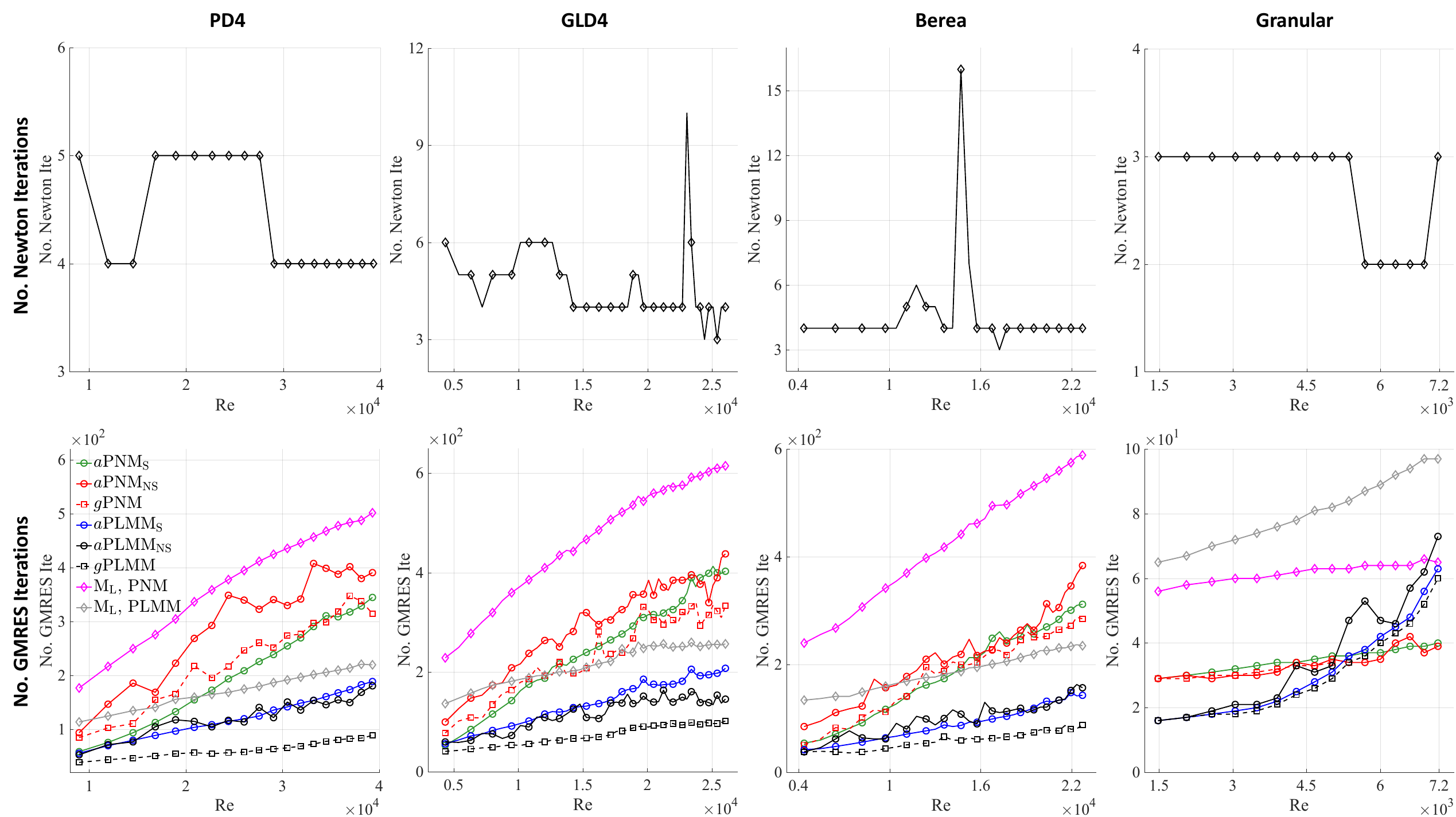}}
  \caption{(Top row) Number of Newton iterations versus $Re$. (Bottom row) Number of GMRES iterations of the first Newton step versus $Re$. Results belong to different preconditioners based on PLMM and PNM applied to the \textit{steady-state Navier-Stokes equations} in all domains. Prefix $a$ means ``algebraic,'' and $g$ means ``geometric.'' Subscripts ``S'' and ``NS'' imply preconditioners are built from Stokes or Navier Stokes, respectively.}
\label{fig:qa_result_ite}
\end{figure}

\begin{figure} [t!]
  \centering
  \centerline{\includegraphics[scale=0.39, trim={0cm 0.0cm 0cm 0cm},clip]{./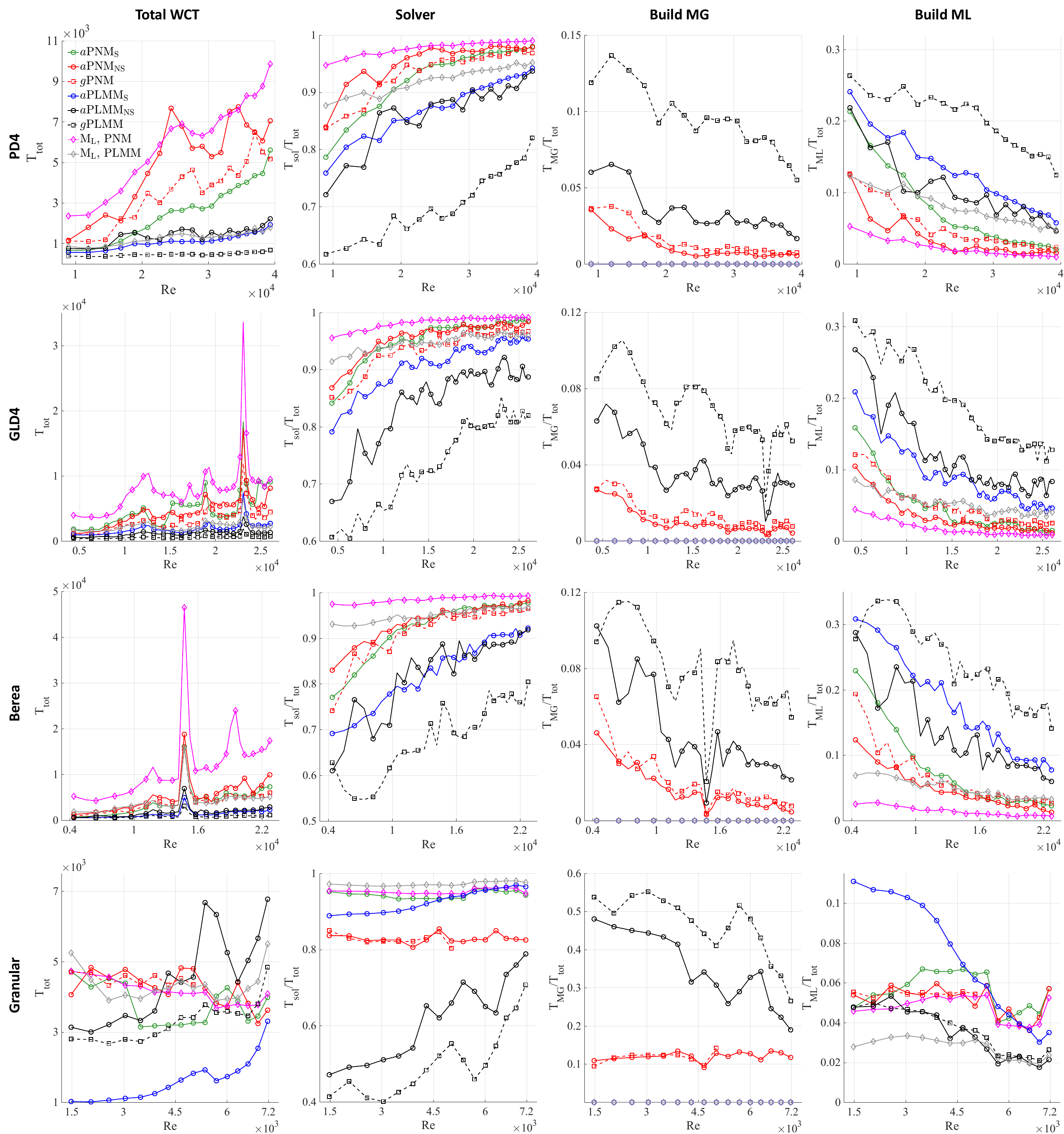}}
  \caption{Total wall-clock times (WCT; in seconds) spent solving the \textit{steady-state Navier-Stokes equations} ($\mathrm{T_{tot}}$) versus $Re$, for different preconditioners based on PLMM and PNM (built from Stokes and Navier Stokes) in all domains. $\mathrm{T_{tot}}$ consists of the costs of building $\mathrm{M_G}$ ($\mathrm{T_{MG}}$; if monolithic), building $\mathrm{M_L}$ ($\mathrm{T_{ML}}$), and self-time of GMRES ($\mathrm{T_{sol}}$). The breakdown of $\mathrm{T_{tot}}$ into $\mathrm{T_{MG}}$, $\mathrm{T_{ML}}$, and $\mathrm{T_{sol}}$ is illustrated in fractions of $\mathrm{T_{tot}}$.}
\label{fig:qa_result_wct}
\end{figure}

\begin{figure} [t!]
  \centering
  \centerline{\includegraphics[scale=0.4, trim={0cm 0.0cm 0cm 0cm},clip]{./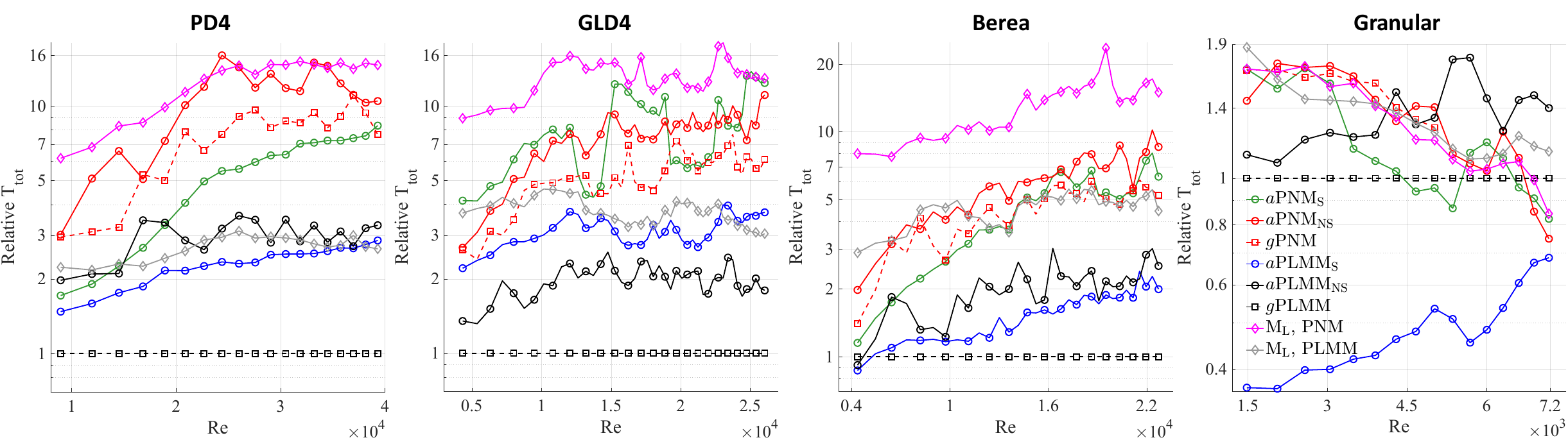}}
  \caption{Relative performance of all monolithic and smoother-only preconditioners in Fig.\ref{fig:qa_result_wct} measured against $g$PLMM for the \textit{steady-state Navier-Stokes equations}. Relative performance is defined as the ratio of total WCT ($\mathrm{T_{tot}}$) for the preconditioner in question over that of $g$PLMM.}
\label{fig:qa_result_performance}
\end{figure}

Fig.\ref{fig:qa_result_ite} shows the number of Newton and GMRES iterations as a function of $Re$ for the PD4, GLD4, Berea, and Granular domains. GMRES iterations correspond to the first Newton step of each $Re$, where GMRES struggles the most compared to subsequent Newton steps. Notice that the number of Newton iterations (between 2 to 6) is almost constant with respect to $Re$, which is a direct result of using the converged solution from the prior $Re$ as the initial guess for the current $Re$. By contrast, GMRES iterations increase steadily with $Re$, indicative of $\mathrm{\hat{A}}\hat{x}\!=\!\hat{b}$ becoming more challenging to solve at higher Reynolds numbers. \textit{The best-performing preconditioner is consistently $g$PLMM across all domains}, outperforming $a\mathrm{PLMM_{S/NS}}$ by up to a factor of $\sim$2, and $a\mathrm{PNM_{S/NS}}$ by up to a factor of $\sim$4. The only exception is Granular, where $g$PLMM and $a\mathrm{PLMM_{S/NS}}$ are comparable, and $a\mathrm{PNM_{S/NS}}$ slightly outperforms $g$PLMM at high (but not low) $Re$. Interestingly, $g$PNM performs as poorly as $a\mathrm{PNM_{S/NS}}$, implying that the closure BC $\partial_n\bs{u}\!=\!\bs{0}$ is only meaningful for $g$PLMM, but not $g$PNM. Moreover, comparing $a\mathrm{PLMM_{S}}$ to $a\mathrm{PLMM_{NS}}$, we see little difference in performance, implying the Stokes-based preconditioner is more attractive because it is built only once. As a general rule, preconditioners based on PLMM are seen to perform far better than those based on PNM.

Fig.\ref{fig:qa_result_ite} also includes the performance of GMRES preconditioned by only the smoothers of PLMM and PNM, i.e., $\mathrm{M_L}$ not paired with $\mathrm{M_G}$. We see the PLMM smoother performs better than the PNM smoother, but neither beats their respective monolithic preconditioners, where $\mathrm{M_L}$ is paired with $\mathrm{M_G}$. This confirms that such pairing is necessary.

Fig.\ref{fig:qa_result_wct} depicts total wall-clock times (WCT), $\mathrm{T_{tot}}$, associated with computations in Fig.\ref{fig:qa_result_ite} versus $Re$. This includes costs corresponding to building $\mathrm{M_G}$ ($\mathrm{T_{MG}}$), building $\mathrm{M_L}$ ($\mathrm{T_{ML}}$; here LU-decomposition of local systems in the additive-Schwarz smoothers), and self-time of GMRES ($\mathrm{T_{sol}}$). Results belong to the $a$PLMM, $a$PNM, $g$PLMM, $g$PNM, and $\mathrm{M_L}$ preconditioners. The breakdown of $\mathrm{T_{tot}}$ into $\mathrm{T_{MG}}$, $\mathrm{T_{ML}}$, and $\mathrm{T_{sol}}$ is shown as fractions of $\mathrm{T_{tot}}$ in Fig.\ref{fig:qa_result_wct}. We see in all cases, $\mathrm{T_{sol}}$ dominates $\mathrm{T_{tot}}$, making the relative ranking among preconditioners in terms of $\mathrm{T_{tot}}$ similar to that already discussed in Fig.\ref{fig:qa_result_ite} in terms of GMRES iterations. To quantify speedup, we normalize the $\mathrm{T_{tot}}$ of all preconditioners against that of $g$PLMM in Fig.\ref{fig:qa_result_performance}. Compared to $g$PLMM, $a\mathrm{PLMM_{S/NS}}$ are 2-3 times slower, $a\mathrm{PNM_{S/NS}}$ are $\sim$10 times slower, the $\mathrm{M_L}$ of PLMM is 3-5 times slower, and the $\mathrm{M_L}$ of PNM is 10-20 times slower. The only exception is 3D Granular, whose $\mathrm{T_{MG}}$ constitutes a sizable fraction of $\mathrm{T_{tot}}$ due to the presence of a few large primary grids, making the continuous updating of $\mathrm{M_G}$ versus $Re$ in $g$PLMM more costly than the one-time cost of construction in $a\mathrm{PLMM_{S}}$. Once again, $a\mathrm{PLMM_S}$ and $a\mathrm{PLMM_{NS}}$ have comparable $\mathrm{T_{tot}}$ across all domains, except Granular for reasons just stated. This makes $a\mathrm{PLMM_S}$ more attractive than $a\mathrm{PLMM_{NS}}$ as it is not rebuilt at every $Re$ increment.

To understand why $g$PLMM outperforms its algebraic counterparts $a\mathrm{PLMM_{S/NS}}$, and why PLMM-based preconditioners outperform PNM-based ones, we visualize the spectrum of the error-propagation matrix $\mathrm{E}$:
\begin{equation} \label{eq:err_prop}
	\mathrm{E_G} = \mathrm{I} - \mathrm{M_G^{-1}} \hat{\mathrm{A}}, \quad \mathrm{E_L} = \mathrm{I} - \mathrm{M_L^{-1}} \hat{\mathrm{A}} \quad \Rightarrow \quad \mathrm{E} = \mathrm{E_L} \mathrm{E_G}
\end{equation}
for $g\mathrm{PLMM}$, $g\mathrm{PNM}$, $a\mathrm{PLMM_{S/NS}}$, and $a\mathrm{PNM_{S/NS}}$ in Fig.\ref{fig:qa_result_spec}. Each spectrum consists of the 500 largest eigenvalues of $\mathrm{E}$ for the PD4 domain at $Re$ = $3.9\times 10^4$ (other domains yield similar results). We see that the spectrum of $g$PLMM is clustered closest to the origin of the complex plane and lies inside the unit circle, explaining its superior GMRES convergence. Next in ranking are $a\mathrm{PLMM_{S/NS}}$, which have similar performance. While $a\mathrm{PLMM_{NS}}$ is more clustered near the origin than $a\mathrm{PLMM_{S}}$, the former is plagued by rogue eigenvalues outside the unit circle slowing down convergence. Lastly, $a\mathrm{PNM_{S/NS}}$ and $g$PNM have the largest spectral radii, thus the slowest GMRES convergence.

In addition to $\mathrm{E}$'s spectrum, Fig.\ref{fig:qa_result_spec} shows the spectra of $\mathrm{E_G}$ and $\mathrm{E_L}$ defined by Eq.\ref{eq:err_prop}. These correspond to the error-propagation matrices of $\mathrm{M_G}$ and $\mathrm{M_L}$, respectively. Eigenvalues of $\mathrm{E_L}$ lie inside the unit circle for both the PLMM and PNM smoothers, with many very close to one, confirming the smoothers alone are inadequate as preconditioners. For $a\mathrm{PLMM_{NS}}$ and $a\mathrm{PNM_{NS}}$, Fig.\ref{fig:qa_result_spec} shows the eigenvalues of $\mathrm{E_G}$ consist of zero or one, because $\mathrm{E_G}$ is a projection by Proposition \ref{prp:project} and our simplification for $\mathrm{M_G}$ in Remark 1. For $g$PLMM, $g$PNM, $a\mathrm{PLMM_S}$, and $a\mathrm{PNM_S}$, $\mathrm{E_G}$'s eigenvalues lie outside the unit circle, because the $\mathrm{\hat{A}}$ used in Eq.\ref{eq:err_prop} is different from the Jacobian used to construct $\mathrm{M_G}$.
What is interesting is that the spectral radius of $\mathrm{E}$ for $g$PLMM and $a\mathrm{PLMM_S}$ (but not $g$PNM and $a\mathrm{PNM_S}$) is smaller than one, despite the spectral radius of $\mathrm{E_G}$ being larger than one. The opposite is true for $a\mathrm{PLMM_{NS}}$ and $a\mathrm{PNM_{NS}}$, where the spectral radius of $\mathrm{E}$ is larger than one despite both $\mathrm{E_G}$ and $\mathrm{E_L}$ having spectra contained within the unit disk. The latter is possible because both $\mathrm{E_G}$ and $\mathrm{E_L}$ are non-normal matrices, with non-orthogonal eigenvectors \cite{trefethen2020spectra}.

\begin{figure} [t!]
  \centering
  \centerline{\includegraphics[scale=0.42, trim={0cm 0.0cm 0cm 0cm},clip]{./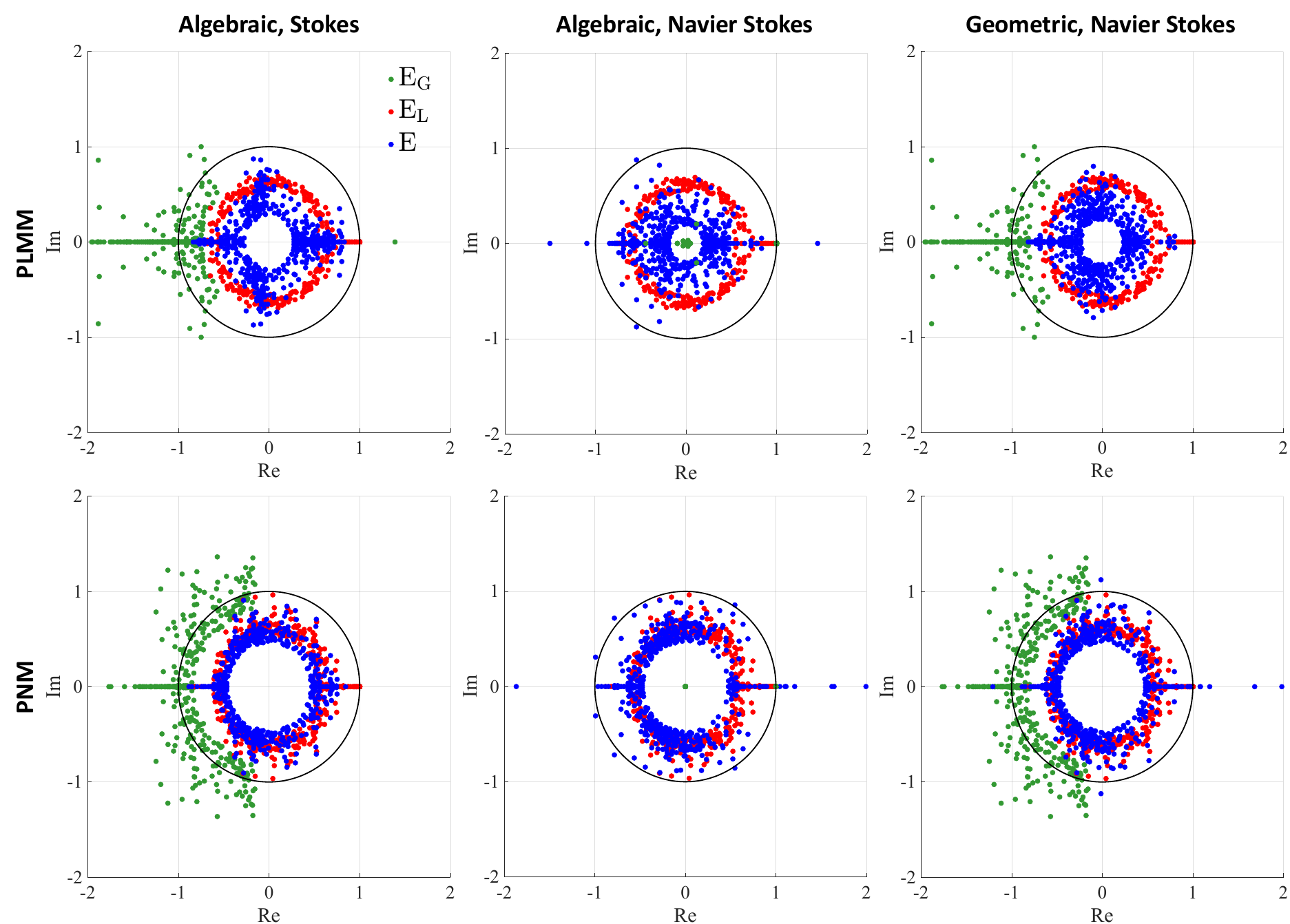}}
  \caption{Spectra (500 largest eigenvalues) of the error-propagation matrices $\mathrm{E}$ (blue dots), $\mathrm{E_G}$ (green dots), and $\mathrm{E_L}$ (red dots) in Eq.\ref{eq:err_prop}. Results belong to preconditioners $a\mathrm{PLMM_{S}}$ and $a\mathrm{PNM_{S}}$ (left column), $a\mathrm{PLMM_{NS}}$ and $a\mathrm{PNM_{NS}}$ (middle column), and $g$PLMM and $g$PNM (right column) applied to the \textit{steady-state Navier-Stokes equations} on the PD4 domain at $Re\!=\!3.9\times10^4$.}
\label{fig:qa_result_spec}
\end{figure}

\subsection{Unsteady-state Navier-Stokes equations} \label{sec:test_prec_dy}
We focus next on the unsteady-state Navier-Stokes equations and the turbulent flow regimes that result from it. To ensure flow is turbulent at all $t$, we set the IC equal to the steady-state solution from the previous section at the highest simulated $Re$. The BCs are identical to the steady-state simulations in Section \ref{sec:test_prec_st}, where the inlet pressure $p_{in}$ is incrementally increased while the outlet pressure is $p_{out}\!=\!0$. The only difference here is that a timescale is associated with the growth of $p_{in}$. Specifically, we take one $\Delta p_{in}$ increment per $\Delta t\! =\! 0.2$ sec, where $\Delta p_{in}\!=\!20$ dynes/cm\textsuperscript{2} in all 2D domains and $\Delta p_{in}\!=\!30$ dynes/cm\textsuperscript{2} in the 3D Granular domain. Within each 0.2 sec interval, during which $p_{in}$ is constant, smaller time steps $dt$ are taken to capture the turbulent details of flow. The $dt$ are taken adaptively, based on how fast the residual norm evolves. We test the performance of the $g$PLMM, $a\mathrm{PLMM_{S/NS}}$, and $a\mathrm{PNM_{S/NS}}$ preconditioners on the PD4, GLD4, Berea, and Granular domains (see Fig.\ref{fig:domains}). We exclude $g$PNM from consideration based on its poor performance in Section \ref{sec:test_prec_st}. Navier-Stokes-based preconditioners (all except $a\mathrm{PLMM}_{S}$ and $a\mathrm{PNM}_{S}$) are rebuilt at the \textit{beginning} of every $Re$ (or $p_{in}$) increment, not in between $dt$ steps or Newton iterations. Since dynamic simulations are computationally expensive, underperforming perconditioners are terminated prematurely whenever it is clear they are not competitive. Finally, GMRES iterations preconditioned by $a\mathrm{PNM_{S/NS}}$ diverge or stagnate in the Granular domain, even for very small $dt$. This is why $a\mathrm{PNM_{S/NS}}$ results for Granular appear absent from Figs.\ref{fig:dy_result_ite}-\ref{fig:dy_result_performance}.

To visualize the complex nature of turbulent flow, Fig.\ref{fig:dy_result_sol} depicts snapshots of fluid pressure, velocity magnitude, and vorticity magnitude ($|\nabla\times\bs{u}|$) at the highest simulated $Re$ in all domains. Compared to steady-state flow in Figs.\ref{fig:coarse_sol1}-\ref{fig:coarse_sol2}, we see many more circulation zones and chaotic patterns. Fig.\ref{fig:dy_result_ite} shows that the number of $dt$ time steps, taken within each $Re$ increment, increases with $Re$, indicating progressively smaller $dt$ is required to capture the growing intricacies of turbulent flow. The number of Newton and GMRES iterations per $dt$ step (averaged over each $Re$ increment) are also depicted as a function of $Re$. GMRES iterations correspond to the first Newton step of each $Re$, where GMRES struggles the most compared to later Newton steps. Similar to Section \ref{sec:test_prec_st}, Newton iterations are constant ($\sim$5) versus $Re$. However, unlike Section \ref{sec:test_prec_st} where GMRES iterations grow with $Re$, they remain nearly constant here. We attribute this to the small $dt$ used to discretize $\partial_t\bs{u}$ in Eq.\ref{eq:governing_eqs}, rendering the (1,1)-block of $\hat{\mathrm{A}}$ in Eq.\ref{eq:Ax=b} diagonally dominant. At the limit of large $dt$, the $\partial_t\bs{u}$ term becomes negligible and the steady-state results of Fig.\ref{fig:qa_result_ite} are recovered.

\begin{figure} [t!]
  \centering
  \centerline{\includegraphics[scale=0.43, trim={0cm 0.0cm 0cm 0cm},clip]{./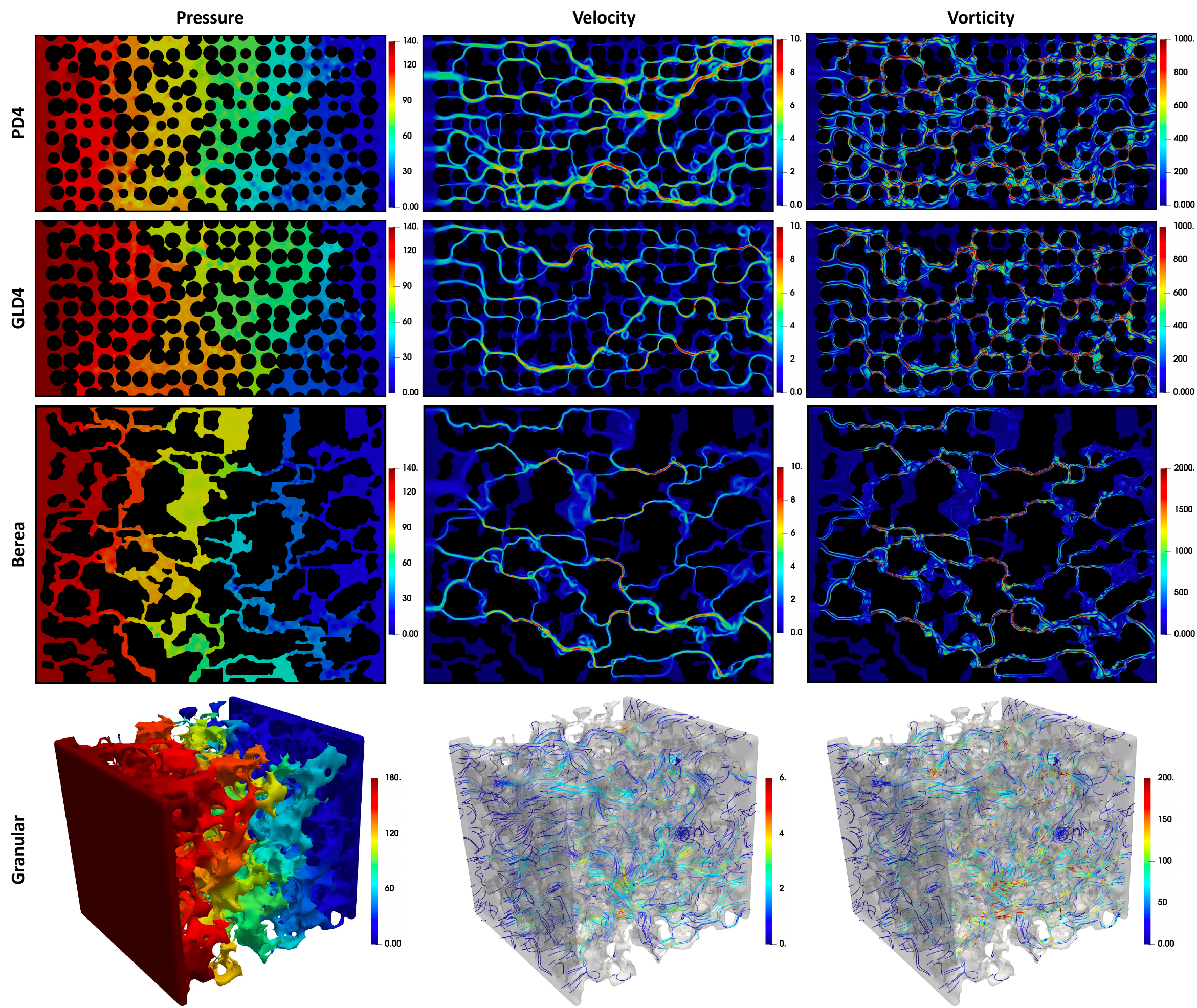}}
  \caption{Spatial distributions of fluid pressure, velocity magnitude, and vorticity magnitude for the \textit{unsteady-state Navier-Stokes equations} solved on the domains in Fig.\ref{fig:domains}, namely, PD4 at $Re$ = $1.3\times 10^5$, GLD4 at $Re \!=\! 5.5\times10^4$ , Berea at $Re \!=\! 4.4\times10^4$ , and Granular at $Re \!=\! 1.8\times10^4$.}
\label{fig:dy_result_sol}
\end{figure}

\begin{figure} [t!]
  \centering
  \centerline{\includegraphics[scale=0.39, trim={0cm 0.0cm 0cm 0cm},clip]{./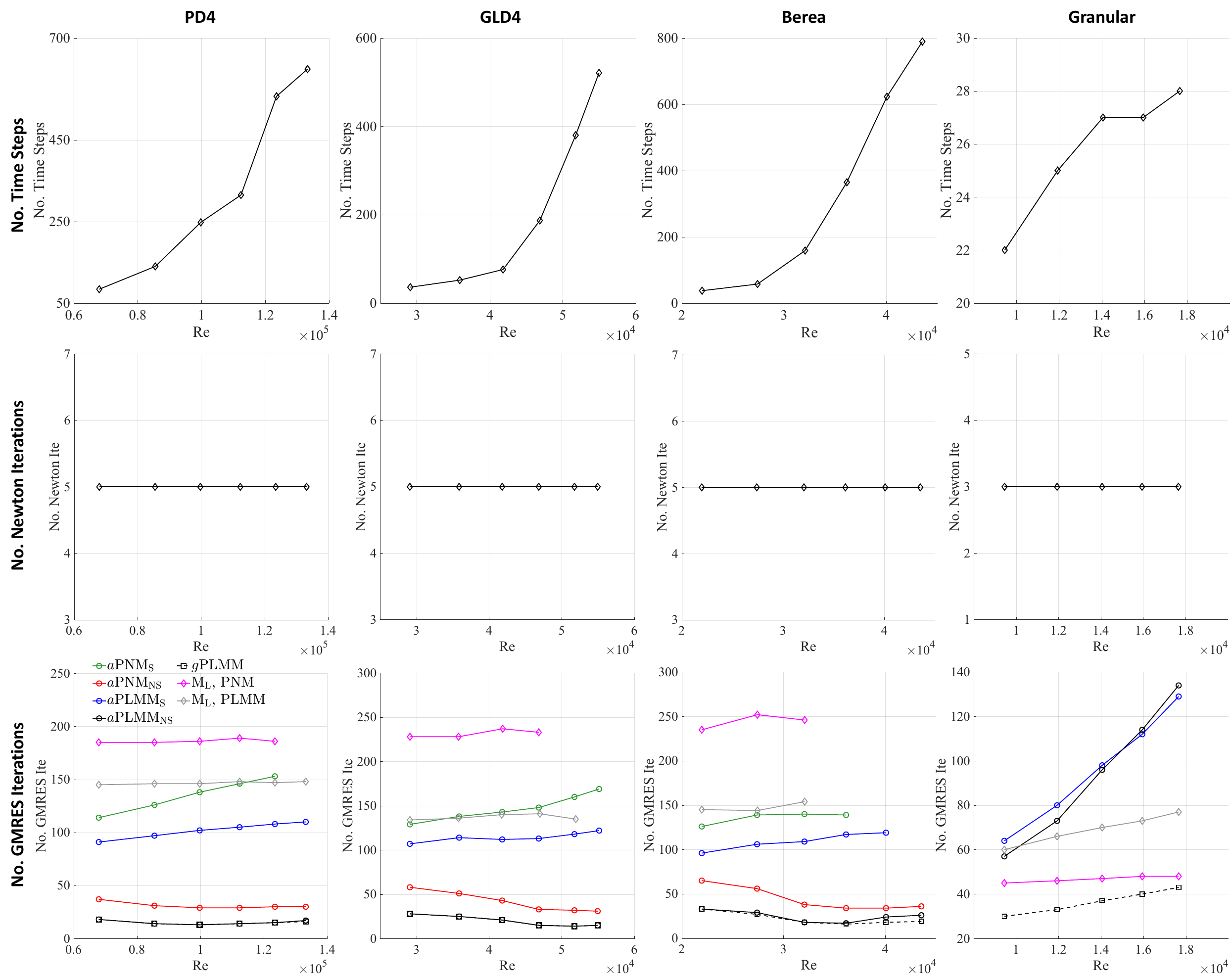}}
  \caption{(Top row) Number of time steps $dt$, taken within each $Re$ increment, versus $Re$. (Middle row) Number of Newton iterations versus $Re$, averaged over each $Re$ increment. (Bottom row) Number of GMRES iterations of the first Newton step versus $Re$, averaged over each $Re$ increment. Results belong to different preconditioners based on PLMM and PNM applied to the \textit{unsteady-state Navier-Stokes equations} in all domains. Prefix $a$ means ``algebraic,'' and $g$ means ``geometric.'' Subscripts ``S'' and ``NS'' imply preconditioners are built from Stokes or Navier Stokes, respectively.}
\label{fig:dy_result_ite}
\end{figure}

Similar to Section \ref{sec:test_prec_st}, Fig.\ref{fig:dy_result_ite} suggests that the \textit{best-performing preconditioner is $g$PLMM across all domains}. However, unlike Section \ref{sec:test_prec_st}, \textit{$aPLMM_{NS}$ is a very close second}, slower by only <1.5 times in 2D and <2.8 times in the 3D Granular. By contrast, $g$PLMM is faster than $a\mathrm{PNM_{NS}}$ by <2 times in 2D domains (3D diverged), and than $a\mathrm{PLMM_S}$, $a\mathrm{PNM_S}$, and PLMM/PNM smoothers by >10 times in 2D and <2 times in 3D Granular. Clearly, preconditioners based on Navier Stokes ($g$PLMM, $a\mathrm{PLMM_{NS}}$, $a\mathrm{PNM_{NS}}$) outperform those based on Stokes ($a\mathrm{PLMM_{S}}$, $a\mathrm{PNM_{S}}$) by about an order of magnitude. This implies that rebuilding the monolithic preconditioners at the start of every $Re$ increment is necessary to efficiently model turbulent flow. Another clear observation is that preconditioners based on PLMM perform twice as fast as those based on PNM. Lastly, smoother-only preconditioners are extremely slow (except in Granular) indicating a pairing with a coarse preconditioner $\mathrm{M_G}$ is necessary for rapid convergence.

Fig.\ref{fig:dy_result_wct} plots the total wall-clock times ($\mathrm{T_{tot}}$) spent on computations versus $Re$ for all domains. The WCTs of all $dt$ steps taken within each $Re$ increment are summed for better representation of cost. The overall rise in $\mathrm{T_{tot}}$ with $Re$ is due to the need for smaller $dt$ in simulating turbulent flow at higher $Re$. By the same token, $\mathrm{T_{sol}}$ also increases with $Re$ but the associated cost-per-$dt$ is roughly constant, as evidenced by the constant number of GMRES iterations versus $Re$ in Fig.\ref{fig:dy_result_ite}. Unlike Section \ref{sec:test_prec_st}, $\mathrm{T_{tot}}$ here is dominated by the cost of building the smoother ($\mathrm{T_{ML}}$; LU-decomposition of local systems in the additive-Schwarz smoothers), especially for $g$PLMM. The cost of rebuilding $\mathrm{M_G}$ is negligible in contrast. To better visualize speedup, Fig.\ref{fig:dy_result_performance} normalizes the $\mathrm{T_{tot}}$ of all preconditioners against that of $g$PLMM. Once again, we see that the performance of $a\mathrm{PLMM_{NS}}$ is on par with $g$PLMM in 2D but slower by <3.4 times in 3D Granular. Moreover, $g$PLMM is faster than $a\mathrm{PNM_{NS}}$ by <3 times, $a\mathrm{PLMM_{S}}$ by <6 times, $a\mathrm{PNM_{S}}$ by <9 times, PLMM smoother by <15 times, and the PNM smoother by <30 times. Other trends regarding the relative ranking of preconditioners are consistent with observations already discussed in relation to GMRES iterations in Fig.\ref{fig:dy_result_ite}.

Fig.\ref{fig:dy_result_spec} visualizes the 500 largest eigenvalues of the error-propagation matrix $\mathrm{E}$ in Eq.\ref{eq:err_prop} for the PD4 domain at $Re\!=\!1.3\times 10^5$. 
Included is also the spectrum of $\mathrm{E_L}$, but not $\mathrm{E_G}$ because most of its eigenvalues for $g$PLMM, $g$PNM, $a\mathrm{PLMM_{S}}$ and $a\mathrm{PNM_{S}}$ lie far outside the unit circle. However, the $\mathrm{E_G}$ of $a\mathrm{PLMM_{NS}}$ and $a\mathrm{PNM_{NS}}$ has eigenvalues that are either zero or one (see discussion in Section \ref{sec:test_prec_st}).
The spectral radius of $\mathrm{E}$ for $g$PLMM and $a\mathrm{PLMM_{NS}}$ are similar, which explains their comparable GMRES convergence in Fig.\ref{fig:dy_result_ite}. The same observation holds for $g$PNM versus $a\mathrm{PNM_{NS}}$. Notice also that the Navier-Stokes-based preconditioners ($a\mathrm{PLMM_{NS}}$/$a\mathrm{PNM_{NS}}$) have significantly reduced spectral radii than Stokes-based preconditioners ($a\mathrm{PNM_S}$/$a\mathrm{PLMM_S}$), resulting in faster GMRES convergence. Finally, the spectral radius of $\mathrm{E_L}$ is almost one, confirming smoothers alone are inadequate for preconditioning GMRES.

\begin{figure} [h!]
  \centering
  \centerline{\includegraphics[scale=0.4, trim={0cm 0.0cm 0cm 0cm},clip]{./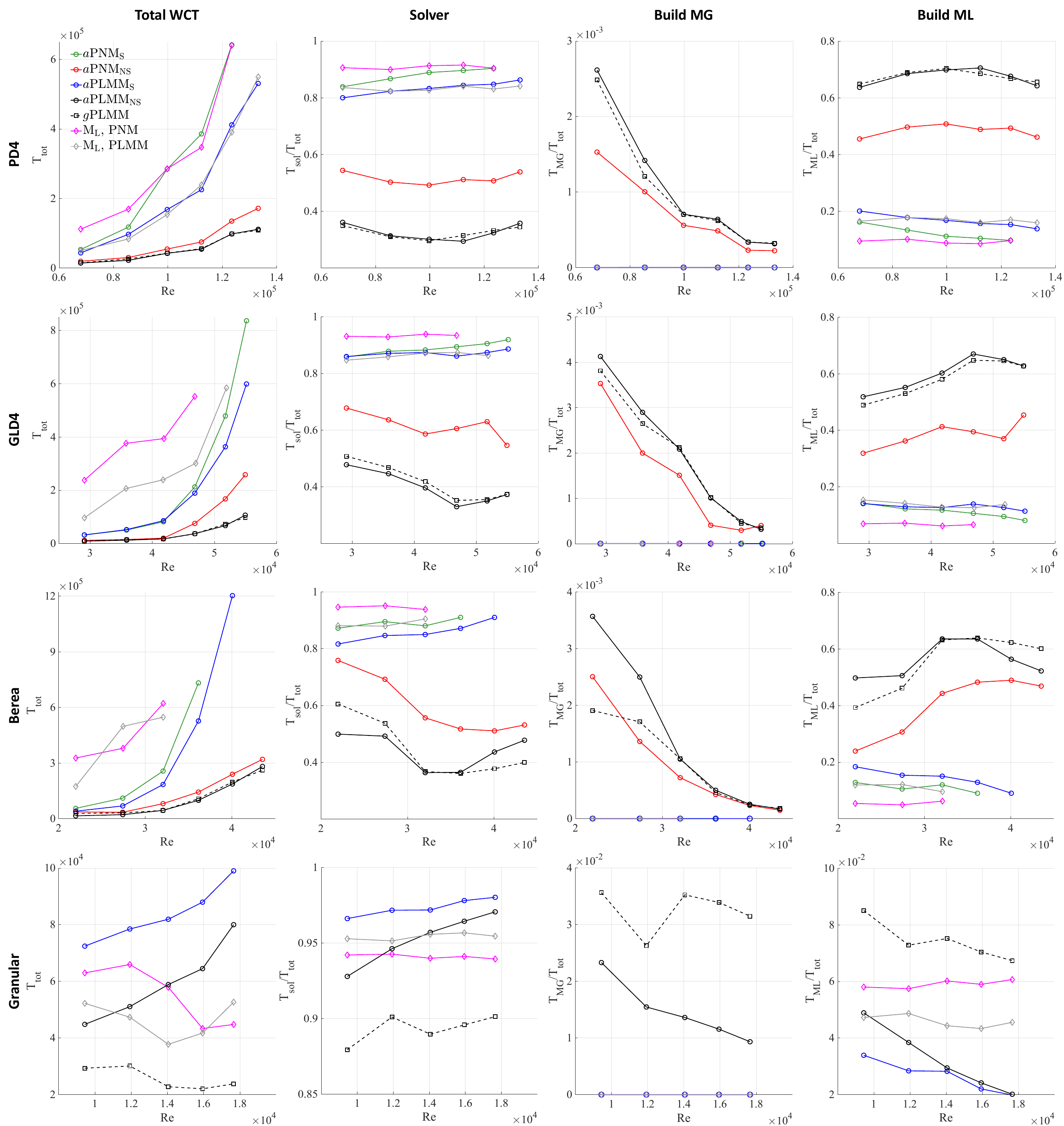}}
  \caption{Total wall-clock times (WCT; in seconds) spent solving the \textit{unsteady-state Navier-Stokes equations} ($\mathrm{T_{tot}}$) versus $Re$, for different preconditioners based on PLMM and PNM (built from Stokes and Navier Stokes) in all domains. $\mathrm{T_{tot}}$ consists of the costs of building $\mathrm{M_G}$ ($\mathrm{T_{MG}}$; if monolithic), building $\mathrm{M_L}$ ($\mathrm{T_{ML}}$), and self-time of GMRES ($\mathrm{T_{sol}}$). The breakdown of $\mathrm{T_{tot}}$ into $\mathrm{T_{MG}}$, $\mathrm{T_{ML}}$, and $\mathrm{T_{sol}}$ is illustrated in fractions of $\mathrm{T_{tot}}$.}
\label{fig:dy_result_wct}
\end{figure}

\begin{figure} [h!]
  \centering
  \centerline{\includegraphics[scale=0.4, trim={0cm 0.0cm 0cm 0cm},clip]{./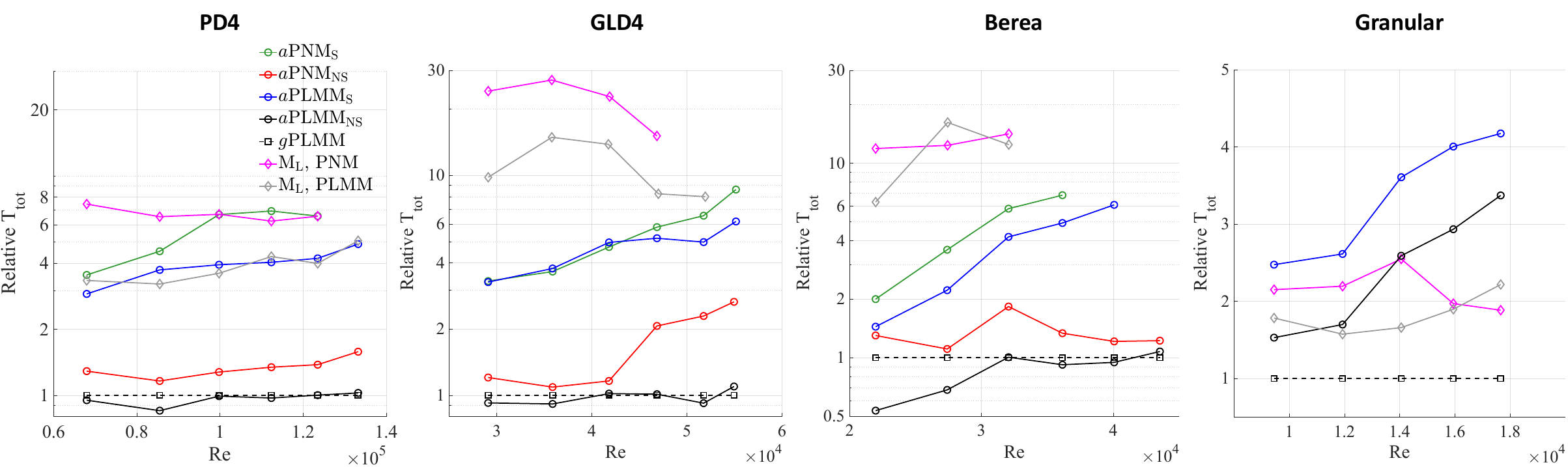}}
  \caption{Relative performance of all monolithic and smoother-only preconditioners in Fig.\ref{fig:dy_result_wct} measured against $g$PLMM for the \textit{unsteady-state Navier-Stokes equations}. Relative performance is defined as the ratio of total WCT ($\mathrm{T_{tot}}$) for the preconditioner in question over that of $g$PLMM.}
\label{fig:dy_result_performance}
\end{figure}

\begin{figure} [h!]
  \centering
  \centerline{\includegraphics[scale=0.41, trim={0cm 0.0cm 0cm 0cm},clip]{./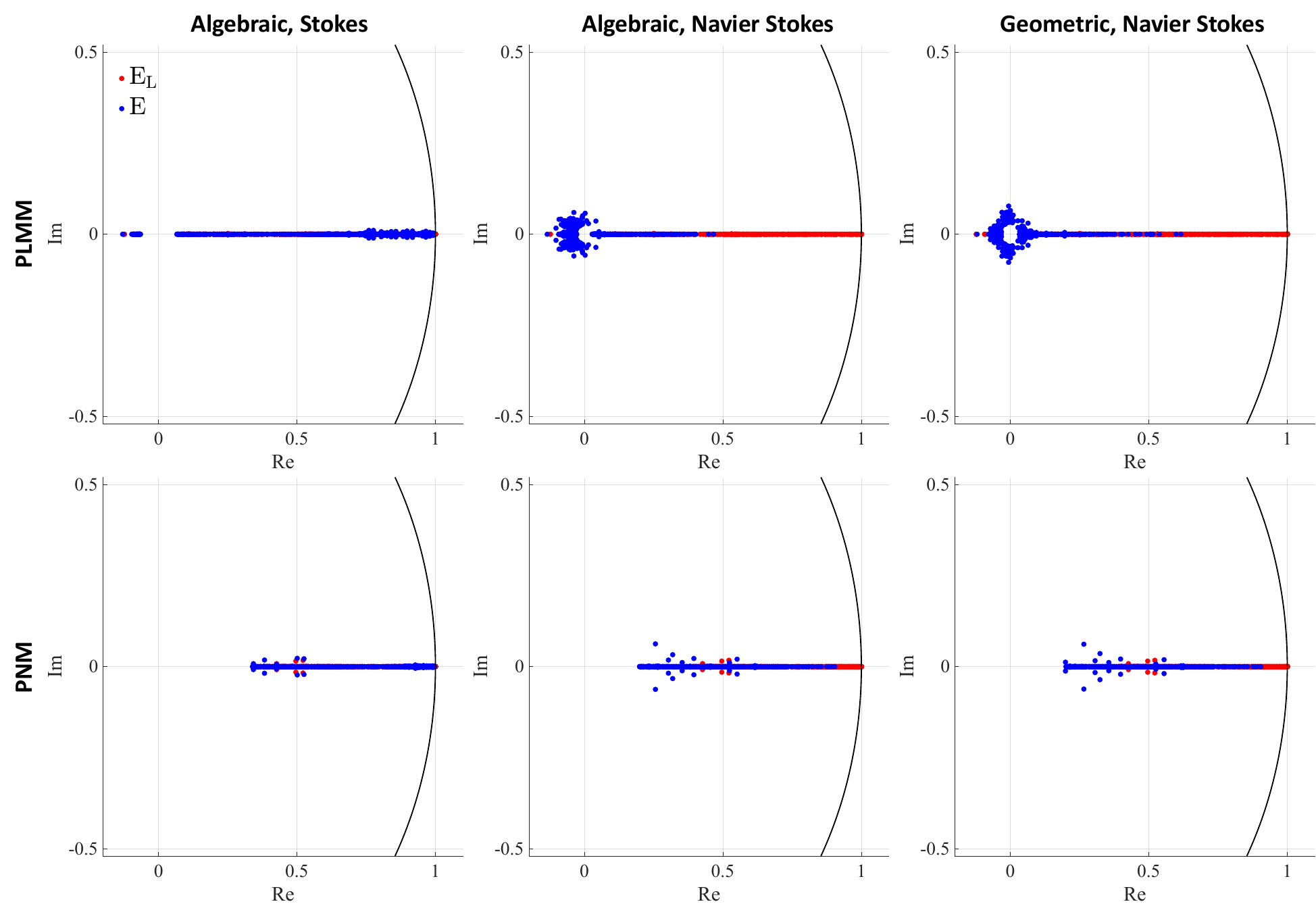}}
  \caption{Spectra (500 largest eigenvalues) of the error-propagation matrices $\mathrm{E}$ (blue dots) and $\mathrm{E_L}$ (red dots) in Eq.\ref{eq:err_prop}. Results belong to preconditioners $a\mathrm{PLMM_{S}}$ and $a\mathrm{PNM_{S}}$ (left column), $a\mathrm{PLMM_{NS}}$ and $a\mathrm{PNM_{NS}}$ (middle column), and $g$PLMM and $g$PNM (right column) applied to the \textit{unsteady-state Navier-Stokes equations} on the PD4 domain at $Re\!=\!1.3\times10^5$.}
\label{fig:dy_result_spec}
\end{figure}

\section{Discussion} \label{sec:discussion}
\subsection{Coarse-scale solver}
We presented a coarse-scale solver for computing approximate solutions to the steady-state Navier-Stokes equations in Algorithm \ref{alg:coarse_scale}, utilizing operators that comprise $\mathrm{M_G}$. We found that the $\mathrm{M_G}$ of $g$PLMM yields far more accurate results than that of $g$PNM. Approximate solutions so obtained are sufficiently accurate ($<30\%$ error in permeability) for use in a wide range of manufacturing (e.g., flow battery) and subsurface (e.g., CO\textsubscript{2} storage) applications. But as $Re$ grows, accuracy degrades because the closure $\partial_n\bs{u}\!=\!\bs{0}$ becomes invalid at contact interfaces. Fig.\ref{fig:coarse_streamline} demonstrates this, where unlike Stokes flow, streamlines no longer strike normal to the interface at high $Re$ due to the formation of vortices. Recall a key step in imposing the velocity closure was to remove the inertia term from the residual of faces near each interface (Fig.\ref{fig:coarse_sketch}), so the column-sum operator $\mathcal{C}(\cdot)$ (in building $\mathrm{\hat{P}}$) imposes $\partial_n\bs{u}\!=\!\bs{0}$ exactly. Since all computations in Algorithm 1 involve only coarse-scale pressure unknowns defined on contact interfaces, with no recourse to the fine grid, the coarse-scale solver is much cheaper than computing solutions directly on the fine grid. Specifically, the $\mathrm{T_{sol}}$ and $\mathrm{T_{ML}}$ costs in Fig.\ref{fig:qa_result_wct} are almost absent, rendering the coarse-scale solver faster by a factor of $\sim$10 in 2D and $\sim$2.5 in 3D Gyroid. However, Newton iterations of the coarse solver are larger by a factor of 2--5.

Two open questions remain: (1) Is there a more accurate closure at high $Re$ and how does one impose it? (2) How can the coarse solver be generalized to time-dependent flows? The answer to (1) is unknown to to the authors, but we suspect there is an opportunity to use mortars (similar to \cite{khan2025hPLMM}) to increase the flexibility of capturing arbitrary velocities along each interface. This would require building additional shape vectors (Eq.\ref{eq:prolong_1}) and solving a larger coarse system (Eq.\ref{eq:coarse_scale_sys}), as was done in \cite{khan2025hPLMM} for elliptic equations. Whether the associated costs would render such an approach prohibitive remains unknown. One answer to (2) involves discretizing $\partial_t\bs{u}$ in the Navier-Stokes equations with a fixed time step $dt$, before building $\mathrm{M_G}$. This embeds $dt$ into the shape and correction vectors, which in turn can be used to march forward in time in increments of $dt$ (similar to \cite{mehmani2018mult}). The drawback is committing to one $dt$ for all $t$.

\subsection{Recommended preconditioners}
The best-performing preconditioner for both steady-state and unsteady-state Navier-Stokes equations is $g$PLMM, consistently across all domains studied in Section \ref{sec:test_prec}. This, incidentally, coincides with the more accurate coarse-scale solver based on $g$PLMM. The downside of $g$PLMM, however, is its need for geometric information, or more specifically, access to the code that can assemble the modified residual and Jacobian ($\tilde{r}$ and $\tilde{J}$ from Eq.\ref{eq:mod_res}). Such access is needed to remove the inertia term from face-residuals near contact interfaces to impose $\partial_n\bs{u}\!=\!\bs{0}$. In the absence of such access, algebraic preconditioners are the only option, with $a\mathrm{PLMM_S}$ recommended for the steady-state and $a\mathrm{PLMM_{NS}}$ for the unsteady-state Navier-Stokes equations. The former has the benefit of being built only once and reused for all $Re$. We remind that block preconditioners ($b$AMG, $b$PLMM, and $b$PNM) were extremely prohibitive and eliminated immediately from further consideration. PNM-based preconditioners ($g$PNM, $a\mathrm{PNM_{S/NS}}$) also exhibited very poor performance, indicating PLMM-based decomposition and closure are more accurate. Finally, smoothers alone are inadequate as preconditioners regardless of the steady-state or time-dependent form of the equations.

\subsection{Computational complexity}
While a serial machine was used here to perform all numerical tests, building and applying the coarse preconditioner $\mathrm{M_G}$ and the smoother $\mathrm{M_L}$ in all the proposed PLMM/PNM-based preconditioners are parallelizable. The cost of building $\mathrm{M_G}$ is dominated by computing shape vectors, $p^{p_i}_{c_j}$, and correction vectors, $c^{p_i}$ in Eq.\ref{eq:prolong_1}, whose computations are fully decoupled across all primary grids. Similarly, applying the additive-Schwarz smoother $\mathrm{M_L}$ requires solving a set of decoupled local systems on primary and dual grids, both amenable to parallelism. In the following, we briefly analyze the computational complexity and parallel scalability of constructing and applying $\mathrm{M_G}$ and $\mathrm{M_L}$.

Let $\Omega$ consist of $N_f$ fine-scale unknowns, $N^p$ primary grids, and $N^d$ (= $N^c$) dual grids. Let all linear(ized) systems (global and local) be solved using a linear solver that scales as $O(n^{\beta})$, where $n$ is the size of the linear system and $\beta \in (1,3)$. The wall-clock times (WCTs) associated with building and applying $\mathrm{M_G}$ and $\mathrm{M_L}$ are:
\begin{subequations}
\label{eq:cost_analysis}
\begin{align}
	\mathcal{T}_{build}^{\mathrm{M_G}} & = O(N_f / N^p)^{\beta} \times (2N^d+N^p) / N_{prc} \label{eq:cost_build_mg}\\
	\mathcal{T}_{apply}^{\mathrm{M_G}} &= O(N^d+N^p)^{\beta} \label{eq:cost_apply_mg}\\
	\mathcal{T}_{apply}^{\mathrm{M_L}} &= O\left[(N_f / N^p)^{\beta} \times N^p / N_{prc} + (N_f f^d/ N^d)^{\beta} \times N^d / N_{prc}\right] \label{eq:cost_apply_ml}
\end{align}
\end{subequations}
where $N_{prc}$ is the number of processors employed and $f^d$ is the fraction of fine-scale unknowns associated with dual grids. Recall dual grids cover only a small portion of $\Omega$ (10--30\%). Focusing on Eq.\ref{eq:cost_build_mg}, the average cost of solving one local system on a primary grid is $O(N_f / N^p)^{\beta}$, of which $2N^d$ must be solved for shape vectors and $N^p$ for correction vectors to build $\mathrm{M_G}$. These computations can all be performed in parallel, entailing $N_{prc}$ can equal $2N^d + N^p$ at maximum. Focusing next on Eq.\ref{eq:cost_apply_mg}, one application of $\mathrm{M_G}$ requires solving a small coarse system of size $N^d+N^p$ (Eq.\ref{eq:coarse_prob}), whose cost is negligible compared to other overheads. As for Eq.\ref{eq:cost_apply_ml}, the cost of applying $\mathrm{M_L}$ consists of solving $N^p$ systems on primary grids and $N^d$ systems on dual grids. If no pre-processing is done, the cost of building $\mathrm{M_L}$ is zero and all cost is due to application. But since applying $\mathrm{M_L}$ in iterative solvers requires repeated solves the same local systems, LU-decomposition during a pre-processing stage can significantly speed up $\mathrm{M_L}$'s application. The construction cost of $\mathrm{M_L}$ then corresponds to these LU-decompositions, and the application cost in Eq.\ref{eq:cost_apply_ml} is lowered (by reducing $\beta$). Both the construction and application of $\mathrm{M_L}$ can be parallelized with $N_{prc}$ up to $\max\{N^p,N^d\}$.

\subsection{Challenges beyond linear preconditioning}
Solving linear systems like Eq.\ref{eq:Ax=b} during Newton iterations is the slowest step in solving the Navier-Stokes equations, which we focused on herein.  With our best preconditioner ($g$PLMM), the cost of steady-state simulations is dominated by GMRES's self-time ($\mathrm{T_{sol}}$), whereas that of dynamic simulations is dominated by $\mathrm{M_L}$'s build-time ($\mathrm{T_{ML}}$). Further speedup requires advances in both solvers and smoothers, not coarse preconditioners. Potential smoothers that remain untested include block ILU($k$), block Gauss-Seidel, and block additive-Schwarz based on PLMM or other.

Beyond linear preconditioning, the biggest challenge with solving the Navier-Stokes equations lies with Newton convergence. If the initial guess is far from the solution, Newton is very prone to divergence at high $Re$. What we have done here to reach high $Re$ is essentially a method called \textit{homotopy continuation} \cite{watson1990homotopy}, where instead of solving the residual $r(\hat{x}; Re)$ in Eq.\ref{eq:Ax=b} directly at $Re$ (the semicolon separates variable from parameter), an augmented residual $H(\hat{x},\lambda;Re)\!:=\!r(\hat{x};\lambda Re)$ is solved for gradually increasing values of $\lambda\!\in\!(0,1)$. Notice $H(\hat{x},0;Re)$ is an easy problem because it corresponds to Stokes flow, whereas $H(\hat{x},1;Re)\!=\!r(\hat{x}; Re)$ is difficult as it corresponds to Navier-Stokes flow at the desired high $Re$. The approach we took to solve $H(\hat{x},\lambda;Re)$ and to increase $\lambda$ was to increment the inlet pressure $p_{in}$ gradually from a low to a high value. This, in turn, implied an increment in $Re$, or equivalently $\lambda$. After solving $H(\hat{x},\lambda;Re)$ for a given $\lambda$, the solution was used as the initial guess for Newton in the next $\lambda$. While effective, our approach is computationally wasteful because $H(\hat{x},\lambda;Re)$ is solved to completion at each fixed $\lambda$. Better schemes, where $\lambda$ is updated simultaneously as $\hat{x}$ in a single Newton loop, exist and would likely be much more efficient \cite{jiang2018homotopy, younis2010homotopy}.

The above continuation approach was found essential for the steady-state flow equations. However, for unsteady-state flow, it is needed only if the time step $dt$ used to discretize the $\partial_t\bs{u}$ term in Eq.\ref{eq:governing_eqs} is large. Note that at the limit $dt\!\rightarrow\!\infty$, the discrete unsteady-state system reduces to state steady. But increasing $dt$ too much introduces numerical diffusion into the solution, resulting in the inability to capture intricate details of the turbulent velocity field. In other words, the physics of the problem itself places an upper bound on $dt$, which is stricter at higher $Re$. If this bound is to be honored in favor of capturing the solution details accurately, then $dt$ must necessarily be small and the system $\hat{\mathrm{A}}\hat{x}\!=\!\hat{b}$ in Eq.\ref{eq:Ax=b} becomes diagonally dominant with respect to its (1,1)-block. The latter helps Newton converge with almost \textit{any} initial guess, but practically, it is natural to use the solution from the prior time step, as was done here.

\subsection{Future extensions}
While our preconditioners target the Navier-Stokes equations for Newtonian fluid flow in a fixed geometry, their applicability extends to more challenging problems. In immiscible two-phase flow, governing equations consist of a momentum balance and a phase-evolution equation, solved in staggered fashion to march forward in time. The former is essentially Navier Stokes plus a source term for interfacial tension, whose solution is the main computational bottleneck \cite{mehmani2019mult}. We expect our preconditioners to accelerate solving the momentum equation without any modification to $\mathrm{M_G}$, as the source term will likely introduce high-frequency variations to the flow field. The phase-evolution equation is hyperbolic and discretized explicitly in time. It is, therefore, much cheaper to solve and errors are high-frequency in nature, which can be removed by a smoother alone (no $\mathrm{M_G}$; similar to the phase-field equation in fracture mechanics \cite{li2023smooth}). Another class of candidate problems exhibit evolving geometry, due either to mineral dissolution/precipitation \cite{noiriel2016precip}, biofilm growth \cite{aufrecht2019biofilm}, or phase change \cite{moure2024phase}. If the representation of $\Gamma_w$ is sharp (as is here), the decomposition of $\Omega$ into primary and dual grids must be updated periodically, which can be done locally and adaptively. But if $\Gamma_w$ is diffuse, as in the Brinkman-Stokes model \cite{soulaine2016dbs}, extra source terms in Eq.\ref{eq:navstokes_mom} capture geometric changes that get embedded into the shape/correction vectors of $\mathrm{M_G}$. Finally, the flow of non-Newtonian fluids (e.g., blood, polymer) may benefit from our preconditioners if applied to linearized forms of the governing equations \cite{crochet2012nonNewt}.

\section{Conclusions} \label{sec:conclusion}
We present monolithic preconditioners to accelerate the convergence of linear solvers applied to the Navier-Stokes equations in arbitrarily complex porous microstructues. They are based on the pore-level multiscale method (PLMM) and the pore network model (PNM), and they generalize previously proposed preconditioners for the Stokes equations \cite{mehmani2025multiscale}. The preconditioners are two-level, consisting of a coarse preconditioner $\mathrm{M_G}$ and a smoother $\mathrm{M_L}$, designed to attenuate low- and high-frequency errors, respectively. To build both, the computational domain is decomposed into subdomains (primary and dual grids), over which local problems are solved. We formulate algebraic ($a$PLMM and $a$PNM) and geometric ($g$PLMM and $g$PNM) versions of each preconditioner, with the key difference being that the closure BC $\partial_n\bs{u}\!=\!\bs{0}$ is imposed exactly on subdomain interfaces in the geometric form but only approximately in the algebraic form. Hence, geometric preconditioners perform better but require access to code assembling the Jacobian to modify it slightly. Algebraic preconditioners need no such access and operate directly on the linear system.

We test our preconditioners against a wide range of 2D and 3D porous geometries, different $Re$, and steady-state and time-dependent forms of the Navier-Stokes equations.We also benchmark them against state-of-the-art block preconditioners $b$AMG, $b$PLMM, and $b$PNM. The best-performing preconditioner across the board was $g$PLMM. But since it requires intrusive access to code, comparable performance is possible with $a\mathrm{PLMM_S}$ for steady-state Navier Stokes and $a\mathrm{PLMM_{NS}}$ for time-dependent Navier Stokes. Subscripts ``S'' and ``NS'' indicate the preconditioners are built from the Jacobians of the Stokes and Navier-Stokes equations. Given the Stokes equations are linear, $a\mathrm{PLMM_S}$ need only be built once for usage at other $Re$, whereas $a\mathrm{PLMM_{NS}}$ (and $g$PLMM) must be built periodically at a nearby $Re$ if $Re$ changes, to retain optimal performance. Block and PNM-based preconditioners are far from competitive.

Lastly, we formulate a coarse-scale solver based on $g$PLMM for the steady-state Navier-Stokes equations, which computes approximate solutions rapidly. Acceleration is achieved by conducting all computations on coarse-scale pressure unknowns defined at subdomain interfaces, without any recourse to the fine grid. The coarse solution can be mapped onto the fine grid using a prolongation operator inherent to $g$PLMM. Approximate solutions so obtained have acceptable accuracy for a range of applications (permeability error $<30$\%). Our coarse solver is more accurate than classical PNMs designed to capture inertial flow in pore-scale domains, simplified by a ball-and-stick representation.

\section*{Acknowledgments}
This material is based upon work supported by the National Science Foundation under Grant No. CMMI-2145222. We acknowledge the Institute for Computational and Data Sciences (ICDS) at Penn State University for access to computational resources.

\appendix
\setcounter{figure}{0}
\setcounter{table}{0}
\section{Validating the fine-grid solver} \label{sec:appendix_a}

We validate the fine-grid solver used to assemble the residual and Jacobian of the Navier-Stokes equations, thus produce all results in this work. We consider fluid flow around circular and square obstacles at different $Re$. Fig.\ref{fig:benchmark_sol} shows the 2D domains, which consist of rectangles of length $L\!=\! 3$ cm and height $H\!=\!2$ cm. The centroids of the circle and square are positioned at half-height and a distance of 1 cm from the left boundary. The side length of the square and diameter of the circle are both $D\!=\!0.2$ cm. The BCs consists of: (1) inflow on the left ($\bs{u}\!=\!(10,0)$ and $\partial_n p\!=\!0$); (2) outflow on the right ($\partial_n \bs{u}\!=\!\bs{0}$ and $p\!=\!0$); (3) slip on top and bottom ($\partial_n\bs{u}\!=\!\bs{0}$ and $\partial_n p\!=\!0$); and (4) no-slip on circle and square perimeters ($\bs{u}\!=\!\bs{0}$ and $\partial_n p\!=\!0$). Fig.\ref{fig:benchmark_sol} shows the pressure, velocity magnitude, and vorticity magnitude over the whole domain at a low and high $Re$. A von Karman vortex street is seen to form in the latter.

To validate the fine-scale solver, we compute drag coefficient:
\begin{equation} \label{eq:drag}
	C_D = \frac{2 F_d}{\rho u_{in}^2 D}
\end{equation}
on the obstacles at different $Re\!=\!\rho u_{in} D/\mu$ and compare them to literature values in Table \ref{tab:drag_cof}. In Eq.\ref{eq:drag}, $F_d$ is the drag force exerted on the obstacles, and $u_{in}\!=\!10$ is the inlet velocity imposed in the horizontal direction. Table.\ref{tab:drag_cof} shows the $C_D$ computed here agrees with many other reported values in the literature for $Re$ between 20 and 300.

\begin{figure} [h!]
  \centering
  \centerline{\includegraphics[scale=0.43, trim={0cm 0.0cm 0cm 0cm},clip]{./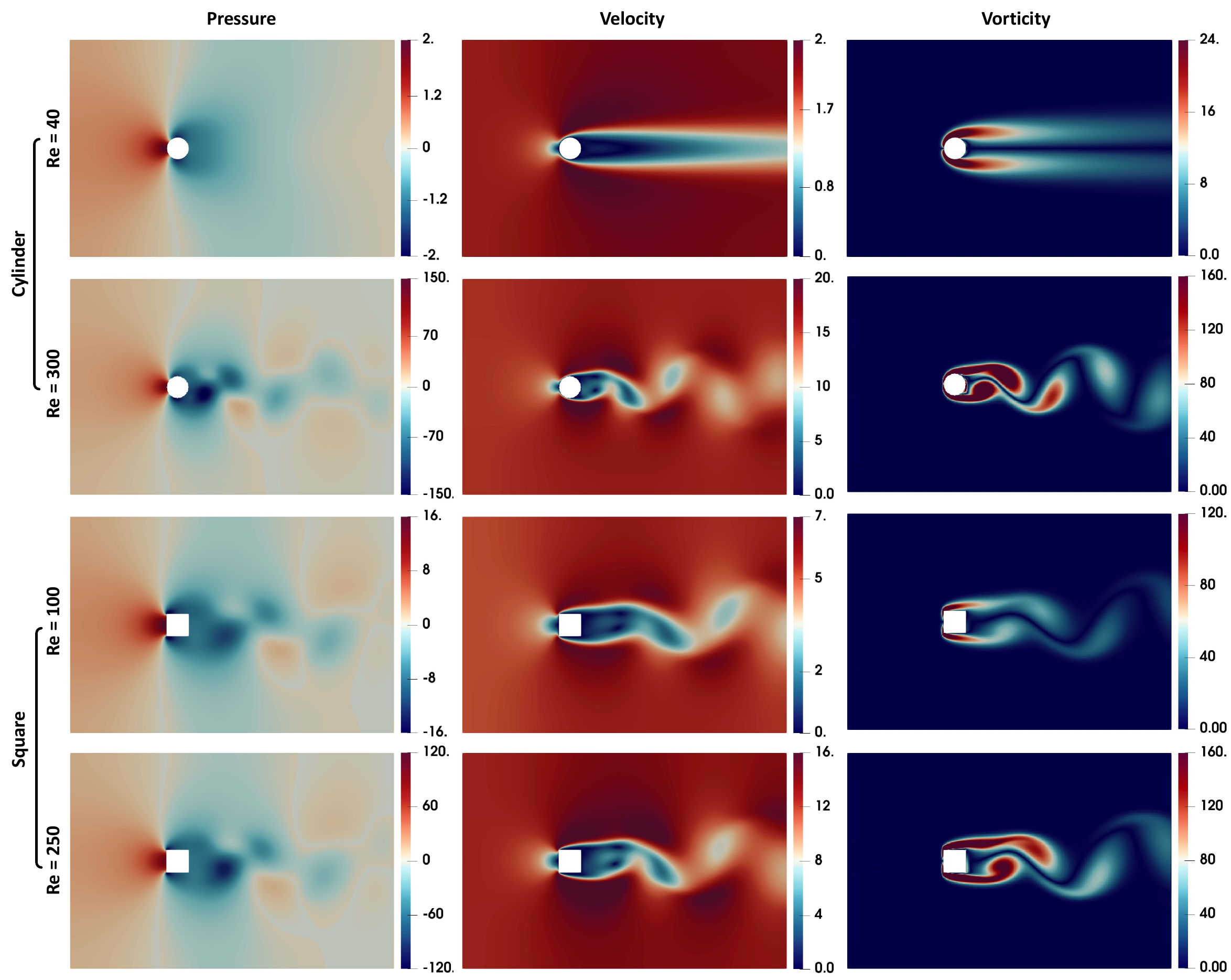}}
  \caption{Pressure, velocity magnitude, and vorticity magnitude for flow around circular and square obstacles at different $Re$.}
\label{fig:benchmark_sol}
\end{figure}

\begin{table}[h!]\centering
	\caption{Comparison of drag coefficient ($C_D$) computed here against literature for flow around a circle and square at different $Re$.}
	{\small
  	\begin{tabular}{ P{2cm} | P{3cm} | P{3cm} | P{5cm} |}
  	\cline{2-4}
	{} & Reynolds number &  This work, $C_D$ & Literature, $C_D$\\
	\hline
	\rowcolor[gray]{0.9}
	\multicolumn{1}{|c|}{\cellcolor{white}} & $Re = 20$ & 2.18 & 2.00~\cite{fornberg1980numerical}, 2.09~\cite{sheard2005computations}, 2.20~\cite{wieselsberger1921neuere} \\
	\multicolumn{1}{|c|}{\cellcolor{white}} & $Re = 40$ & 1.63 & 1.62~\cite{calhoun2002cartesian}, 1.53~\cite{sheard2005computations}, 1.60~\cite{russell2003cartesian} \\
	\rowcolor[gray]{0.9}
	\multicolumn{1}{|c|}{\cellcolor{white}} & $Re = 100$ & 1.37 & 1.35~\cite{beaudan1995numerical}, 1.39~\cite{silva2003numerical}, 1.35~\cite{main2018shifted}\\
	\multicolumn{1}{|c|}{\cellcolor{white}\multirow{-4}{*}{Cylinder}} & $Re = 300$ & 1.32 & 1.38~\cite{main2018shifted}, 1.28~\cite{wieselsberger1921neuere}, 1.37~\cite{rajani2009numerical} \\
	\hline
	\rowcolor[gray]{0.9}
	\multicolumn{1}{|c|}{\cellcolor{white}} & $Re = 100$ & 1.57 & 1.59~\cite{khademinezhad2015numerical}, 1.51~\cite{lam2012numerical} \\
	\multicolumn{1}{|c|}{\cellcolor{white}\multirow{-2}{*}{Square}} & $Re = 250$ & 1.71 & 1.66~\cite{main2018shifted}, 1.69-1.72~\cite{saha2003three} \\
	\hline
 	\end{tabular}
	}
	\label{tab:drag_cof}
\end{table}

\section*{References}
\bibliographystyle{unsrtnat}
\bibliography{./References.bib}

\begin{thebibliography}{60}
\providecommand{\natexlab}[1]{#1}
\providecommand{\url}[1]{\texttt{#1}}
\expandafter\ifx\csname urlstyle\endcsname\relax
  \providecommand{\doi}[1]{doi: #1}\else
  \providecommand{\doi}{doi: \begingroup \urlstyle{rm}\Url}\fi

\bibitem[Bear(2013)]{bear2013dynamics}
Jacob Bear.
\newblock \emph{Dynamics of fluids in porous media}.
\newblock Courier Corporation, 2013.

\bibitem[Liu and Chen(2014)]{liu2014porous}
Peisheng Liu and Guo-Feng Chen.
\newblock \emph{Porous materials: processing and applications}.
\newblock Elsevier, 2014.

\bibitem[Wildenschild and Sheppard(2013)]{wildenschild2013x}
Dorthe Wildenschild and Adrian~P Sheppard.
\newblock X-ray imaging and analysis techniques for quantifying pore-scale
  structure and processes in subsurface porous medium systems.
\newblock \emph{Advances in Water Resources}, 51:\penalty0 217--246, 2013.

\bibitem[Perot(2000)]{perot2000stagFVM}
Blair Perot.
\newblock Conservation properties of unstructured staggered mesh schemes.
\newblock \emph{Journal of Computational Physics}, 159\penalty0 (1):\penalty0
  58--89, 2000.

\bibitem[Harlow and Welch(1965)]{harlow1965MAC}
Francis~H Harlow and J~Eddie Welch.
\newblock Numerical calculation of time-dependent viscous incompressible flow
  of fluid with free surface.
\newblock \emph{The physics of fluids}, 8\penalty0 (12):\penalty0 2182--2189,
  1965.

\bibitem[Bochev et~al.(2006)Bochev, Dohrmann, and
  Gunzburger]{bochev2006MFEMstab}
Pavel~B Bochev, Clark~R Dohrmann, and Max~D Gunzburger.
\newblock Stabilization of low-order mixed finite elements for the stokes
  equations.
\newblock \emph{SIAM Journal on Numerical Analysis}, 44\penalty0 (1):\penalty0
  82--101, 2006.

\bibitem[Loghin and Wathen(2002)]{loghin2002MpFpA}
Daniel Loghin and AJ~Wathen.
\newblock Schur complement preconditioners for the navier--stokes equations.
\newblock \emph{International journal for numerical methods in fluids},
  40\penalty0 (3-4):\penalty0 403--412, 2002.

\bibitem[Mehmani and Tchelepi(2019)]{mehmani2019mult}
Yashar Mehmani and Hamdi~A Tchelepi.
\newblock Multiscale formulation of two-phase flow at the pore scale.
\newblock \emph{Journal of Computational Physics}, 389:\penalty0 164--188,
  2019.

\bibitem[Bachu(2008)]{bachu2008CO2}
Stefan Bachu.
\newblock Co2 storage in geological media: Role, means, status and barriers to
  deployment.
\newblock \emph{Progress in energy and combustion science}, 34\penalty0
  (2):\penalty0 254--273, 2008.

\bibitem[Hanson et~al.(2022)Hanson, Kutchko, Lackey, Gulliver, Strazisar,
  Tinker, Haeri, Wright, Huerta, Baek, et~al.]{hanson2022H2DOEreport}
Angela~Goodman Hanson, Barbara Kutchko, Greg Lackey, Djuna Gulliver, Brian~R
  Strazisar, Kara~A Tinker, Foad Haeri, Ruishu Wright, Nicolas Huerta,
  Seunghwan Baek, et~al.
\newblock Subsurface hydrogen and natural gas storage: State of knowledge and
  research recommendations report.
\newblock 2022.

\bibitem[Barbier(2002)]{barbier2002geothermal}
Enrico Barbier.
\newblock Geothermal energy technology and current status: an overview.
\newblock \emph{Renewable and sustainable energy reviews}, 6\penalty0
  (1-2):\penalty0 3--65, 2002.

\bibitem[Andersson et~al.(2016)Andersson, Beale, Espinoza, Wu, and
  Lehnert]{andersson2016fuelcell}
Martin Andersson, SB~Beale, M~Espinoza, Z~Wu, and W~Lehnert.
\newblock A review of cell-scale multiphase flow modeling, including water
  management, in polymer electrolyte fuel cells.
\newblock \emph{Applied Energy}, 180:\penalty0 757--778, 2016.

\bibitem[Lee et~al.(2020)Lee, Lee, Fahy, Kim, Krause, LaManna, Baltic,
  Jacobson, Hussey, and Bazylak]{lee2020electrolyzer}
Jason~K Lee, ChungHyuk Lee, Kieran~F Fahy, Pascal~J Kim, Kevin Krause, Jacob~M
  LaManna, Elias Baltic, David~L Jacobson, Daniel~S Hussey, and Aimy Bazylak.
\newblock Accelerating bubble detachment in porous transport layers with
  patterned through-pores.
\newblock \emph{ACS Applied Energy Materials}, 3\penalty0 (10):\penalty0
  9676--9684, 2020.

\bibitem[Clyne et~al.(2006)Clyne, Golosnoy, Tan, and
  Markaki]{clyne2006heatexchange}
TW~Clyne, IO~Golosnoy, JC~Tan, and AE~Markaki.
\newblock Porous materials for thermal management under extreme conditions.
\newblock \emph{Philosophical Transactions of the Royal Society A:
  Mathematical, Physical and Engineering Sciences}, 364\penalty0
  (1838):\penalty0 125--146, 2006.

\bibitem[Saad(2003)]{saad2003book}
Yousef Saad.
\newblock \emph{Iterative methods for sparse linear systems}, volume~82.
\newblock siam, 2003.

\bibitem[Benzi et~al.(2005)Benzi, Golub, and Liesen]{benzi2005saddle}
Michele Benzi, Gene~H Golub, and J{\"o}rg Liesen.
\newblock Numerical solution of saddle point problems.
\newblock \emph{Acta numerica}, 14:\penalty0 1--137, 2005.

\bibitem[Ruge and St{\"u}ben(1987)]{ruge1987algebraic}
John~W Ruge and Klaus St{\"u}ben.
\newblock Algebraic multigrid.
\newblock In \emph{Multigrid methods}, pages 73--130. SIAM, 1987.

\bibitem[Notay(2010)]{notay2010agmg}
Yvan Notay.
\newblock An aggregation-based algebraic multigrid method.
\newblock \emph{Electronic transactions on numerical analysis}, 37\penalty0
  (6):\penalty0 123--146, 2010.

\bibitem[Silvester et~al.(2001)Silvester, Elman, Kay, and
  Wathen]{silvester2001StokesVcycle}
David Silvester, Howard Elman, David Kay, and Andrew Wathen.
\newblock Efficient preconditioning of the linearized navier--stokes equations
  for incompressible flow.
\newblock \emph{Journal of Computational and Applied Mathematics}, 128\penalty0
  (1-2):\penalty0 261--279, 2001.

\bibitem[Elman and Silvester(1996)]{elman1996Grammian}
Howard Elman and David Silvester.
\newblock Fast nonsymmetric iterations and preconditioning for navier--stokes
  equations.
\newblock \emph{SIAM Journal on Scientific Computing}, 17\penalty0
  (1):\penalty0 33--46, 1996.

\bibitem[Elman(1999)]{elman1999BFBt}
Howard~C Elman.
\newblock Preconditioning for the steady-state navier--stokes equations with
  low viscosity.
\newblock \emph{SIAM Journal on Scientific Computing}, 20\penalty0
  (4):\penalty0 1299--1316, 1999.

\bibitem[Kay et~al.(2002)Kay, Loghin, and Wathen]{kay2002GreenTensor}
David Kay, Daniel Loghin, and Andrew Wathen.
\newblock A preconditioner for the steady-state navier--stokes equations.
\newblock \emph{SIAM Journal on Scientific Computing}, 24\penalty0
  (1):\penalty0 237--256, 2002.

\bibitem[Elman et~al.(2006)Elman, Howle, Shadid, Shuttleworth, and
  Tuminaro]{elman2006BFBtScaled}
Howard Elman, Victoria~E Howle, John Shadid, Robert Shuttleworth, and Ray
  Tuminaro.
\newblock Block preconditioners based on approximate commutators.
\newblock \emph{SIAM Journal on Scientific Computing}, 27\penalty0
  (5):\penalty0 1651--1668, 2006.

\bibitem[Mehmani and Li(2025)]{mehmani2025multiscale}
Yashar Mehmani and Kangan Li.
\newblock Multiscale preconditioning of stokes flow in complex porous
  geometries.
\newblock \emph{Journal of Computational Physics}, 521:\penalty0 113541, 2025.

\bibitem[Mehmani and Tchelepi(2018)]{mehmani2018mult}
Yashar Mehmani and Hamdi~A Tchelepi.
\newblock Multiscale computation of pore-scale fluid dynamics: Single-phase
  flow.
\newblock \emph{Journal of Computational Physics}, 375:\penalty0 1469--1487,
  2018.

\bibitem[Fatt(1956)]{fatt1956network}
Irving Fatt.
\newblock The network model of porous media.
\newblock \emph{Trans. AIME}, 207:\penalty0 144--159, 1956.

\bibitem[Beucher and Lantu\'{e}joul(1979)]{beucher1979water}
Serge Beucher and Christian Lantu\'{e}joul.
\newblock Use of watersheds in contour detection.
\newblock In \emph{International Workshop on Image Processing: Real-time Edge
  and Motion Detection/Estimation, Rennes, France}, 1979.

\bibitem[Heinlein et~al.(2020)Heinlein, Hochmuth, and
  Klawonn]{heinlein2020reduced}
Alexander Heinlein, Christian Hochmuth, and Axel Klawonn.
\newblock Reduced dimension gdsw coarse spaces for monolithic schwarz domain
  decomposition methods for incompressible fluid flow problems.
\newblock \emph{International Journal for Numerical Methods in Engineering},
  121\penalty0 (6):\penalty0 1101--1119, 2020.

\bibitem[Heinlein et~al.(2025)Heinlein, Klawonn, Knepper, and
  Sa{\ss}mannshausen]{heinlein2025NavStokes}
Alexander Heinlein, Axel Klawonn, Jascha Knepper, and Lea Sa{\ss}mannshausen.
\newblock Monolithic and block overlapping schwarz preconditioners for the
  incompressible navier--stokes equations.
\newblock \emph{arXiv preprint arXiv:2506.16179}, 2025.

\bibitem[Khan and Mehmani(2025)]{khan2025hPLMM}
Sabit~Mahmood Khan and Yashar Mehmani.
\newblock High-order multiscale preconditioner for elasticity of arbitrary
  structures.
\newblock \emph{arXiv preprint arXiv:2509.19777}, 2025.

\bibitem[Thauvin and Mohanty(1998)]{thauvin1998PNM}
F~Thauvin and KK~Mohanty.
\newblock Network modeling of non-darcy flow through porous media.
\newblock \emph{Transport in Porous Media}, 31\penalty0 (1):\penalty0 19--37,
  1998.

\bibitem[Balhoff and Wheeler(2009)]{balhoff2009PNM}
Matthew~T Balhoff and Mary~F Wheeler.
\newblock A predictive pore-scale model for non-darcy flow in porous media.
\newblock \emph{Spe Journal}, 14\penalty0 (04):\penalty0 579--587, 2009.

\bibitem[Veyskarami et~al.(2018)Veyskarami, Hassani, and
  Ghazanfari]{veyskarami2018PNM}
Maziar Veyskarami, Amir~Hossein Hassani, and Mohammad~Hossein Ghazanfari.
\newblock A new insight into onset of inertial flow in porous media using
  network modeling with converging/diverging pores.
\newblock \emph{Computational Geosciences}, 22\penalty0 (1):\penalty0 329--346,
  2018.

\bibitem[Mehmani et~al.(2021)Mehmani, Castelletto, and
  Tchelepi]{mehmani2021multiscale}
Yashar Mehmani, Nicola Castelletto, and Hamdi~A Tchelepi.
\newblock Multiscale formulation of frictional contact mechanics at the pore
  scale.
\newblock \emph{Journal of Computational Physics}, 430:\penalty0 110092, 2021.

\bibitem[Li et~al.(2024)Li, Khan, and Mehmani]{li2024machine}
Kangan Li, Sabit~Mahmood Khan, and Yashar Mehmani.
\newblock Machine learning for preconditioning elliptic equations in porous
  microstructures: A path to error control.
\newblock \emph{Computer Methods in Applied Mechanics and Engineering},
  427:\penalty0 117056, 2024.

\bibitem[Li and Mehmani(2024)]{li2023smooth}
Kangan Li and Yashar Mehmani.
\newblock A multiscale preconditioner for crack evolution in porous
  microstructures: Accelerating phase-field methods.
\newblock \emph{International Journal for Numerical Methods in Engineering},
  125\penalty0 (11):\penalty0 e7463, 2024.

\bibitem[Boek and Venturoli(2010)]{boek2010lberea}
Edo~S Boek and Maddalena Venturoli.
\newblock Lattice-boltzmann studies of fluid flow in porous media with
  realistic rock geometries.
\newblock \emph{Computers \& Mathematics with Applications}, 59\penalty0
  (7):\penalty0 2305--2314, 2010.

\bibitem[Schoen(1970)]{schoen1970gyroid}
Alan~Hugh Schoen.
\newblock \emph{Infinite periodic minimal surfaces without self-intersections},
  volume 5541.
\newblock National Aeronautics and Space Administration, 1970.

\bibitem[Chen(2009)]{Chen2008ifem}
Long Chen.
\newblock {$i$FEM}: an integrated finite element methods package in {MATLAB}.
\newblock Technical report, 2009.
\newblock URL \url{https://github.com/lyc102/ifem}.

\bibitem[Trefethen and Embree(2020)]{trefethen2020spectra}
Lloyd~N Trefethen and Mark Embree.
\newblock Spectra and pseudospectra: the behavior of nonnormal matrices and
  operators.
\newblock 2020.

\bibitem[Watson(1990)]{watson1990homotopy}
Layne~T Watson.
\newblock Globally convergent homotopy algorithms for nonlinear systems of
  equations.
\newblock \emph{Nonlinear Dynamics}, 1\penalty0 (2):\penalty0 143--191, 1990.

\bibitem[Jiang and Tchelepi(2018)]{jiang2018homotopy}
Jiamin Jiang and Hamdi~A Tchelepi.
\newblock Dissipation-based continuation method for multiphase flow in
  heterogeneous porous media.
\newblock \emph{Journal of Computational Physics}, 375:\penalty0 307--336,
  2018.

\bibitem[Younis et~al.(2010)Younis, Tchelepi, and Aziz]{younis2010homotopy}
RM~M Younis, HA~A Tchelepi, and Khalid Aziz.
\newblock Adaptively localized continuation-newton method-nonlinear solvers
  that converge all the time.
\newblock \emph{SPE Journal}, 15\penalty0 (02):\penalty0 526--544, 2010.

\bibitem[Noiriel et~al.(2016)Noiriel, Steefel, Yang, and
  Bernard]{noiriel2016precip}
Catherine Noiriel, Carl~I Steefel, Li~Yang, and Dominique Bernard.
\newblock Effects of pore-scale precipitation on permeability and flow.
\newblock \emph{Advances in water resources}, 95:\penalty0 125--137, 2016.

\bibitem[Aufrecht et~al.(2019)Aufrecht, Fowlkes, Bible, Morrell-Falvey,
  Doktycz, and Retterer]{aufrecht2019biofilm}
Jayde~A Aufrecht, Jason~D Fowlkes, Amber~N Bible, Jennifer Morrell-Falvey,
  Mitchel~J Doktycz, and Scott~T Retterer.
\newblock Pore-scale hydrodynamics influence the spatial evolution of bacterial
  biofilms in a microfluidic porous network.
\newblock \emph{PloS one}, 14\penalty0 (6):\penalty0 e0218316, 2019.

\bibitem[Moure and Fu(2024)]{moure2024phase}
Adrian Moure and Xiaojing Fu.
\newblock A phase-field model for wet snow metamorphism.
\newblock \emph{Crystal Growth \& Design}, 24\penalty0 (19):\penalty0
  7808--7821, 2024.

\bibitem[Soulaine and Tchelepi(2016)]{soulaine2016dbs}
Cyprien Soulaine and Hamdi~A Tchelepi.
\newblock Micro-continuum approach for pore-scale simulation of subsurface
  processes.
\newblock \emph{Transport in porous media}, 113\penalty0 (3):\penalty0
  431--456, 2016.

\bibitem[Crochet et~al.(2012)Crochet, Davies, and Walters]{crochet2012nonNewt}
Marcel~J Crochet, Arthur~Russell Davies, and Kenneth Walters.
\newblock \emph{Numerical simulation of non-Newtonian flow}, volume~1.
\newblock Elsevier, 2012.

\bibitem[Fornberg(1980)]{fornberg1980numerical}
Bengt Fornberg.
\newblock A numerical study of steady viscous flow past a circular cylinder.
\newblock \emph{Journal of Fluid Mechanics}, 98\penalty0 (4):\penalty0
  819--855, 1980.

\bibitem[Sheard et~al.(2005)Sheard, Hourigan, and
  Thompson]{sheard2005computations}
Gregory~J Sheard, Kerry Hourigan, and Mark~Christopher Thompson.
\newblock Computations of the drag coefficients for low-reynolds-number flow
  past rings.
\newblock \emph{Journal of Fluid Mechanics}, 526:\penalty0 257--275, 2005.

\bibitem[Wieselsberger(1921)]{wieselsberger1921neuere}
C~von Wieselsberger.
\newblock Neuere feststellungen under die gesetze des flussigkeits und
  luftwiderstandes.
\newblock \emph{Phys. z.}, 22:\penalty0 321--328, 1921.

\bibitem[Calhoun(2002)]{calhoun2002cartesian}
Donna Calhoun.
\newblock A cartesian grid method for solving the two-dimensional
  streamfunction-vorticity equations in irregular regions.
\newblock \emph{Journal of computational physics}, 176\penalty0 (2):\penalty0
  231--275, 2002.

\bibitem[Russell and Wang(2003)]{russell2003cartesian}
David Russell and Z~Jane Wang.
\newblock A cartesian grid method for modeling multiple moving objects in 2d
  incompressible viscous flow.
\newblock \emph{Journal of Computational Physics}, 191\penalty0 (1):\penalty0
  177--205, 2003.

\bibitem[Beaudan(1995)]{beaudan1995numerical}
Patrick~Bruno Beaudan.
\newblock \emph{Numerical experiments on the flow past a circular cylinder at
  sub-critical Reynolds number}.
\newblock Stanford University, 1995.

\bibitem[Silva et~al.(2003)Silva, Silveira-Neto, and
  Damasceno]{silva2003numerical}
ALF Lima~E Silva, A~Silveira-Neto, and JJR Damasceno.
\newblock Numerical simulation of two-dimensional flows over a circular
  cylinder using the immersed boundary method.
\newblock \emph{Journal of Computational Physics}, 189\penalty0 (2):\penalty0
  351--370, 2003.

\bibitem[Main and Scovazzi(2018)]{main2018shifted}
Alex Main and Guglielmo Scovazzi.
\newblock The shifted boundary method for embedded domain computations. part
  ii: Linear advection--diffusion and incompressible navier--stokes equations.
\newblock \emph{Journal of Computational Physics}, 372:\penalty0 996--1026,
  2018.

\bibitem[Rajani et~al.(2009)Rajani, Kandasamy, and
  Majumdar]{rajani2009numerical}
BN~Rajani, A~Kandasamy, and Sekhar Majumdar.
\newblock Numerical simulation of laminar flow past a circular cylinder.
\newblock \emph{Applied Mathematical Modelling}, 33\penalty0 (3):\penalty0
  1228--1247, 2009.

\bibitem[Khademinezhad et~al.(2015)Khademinezhad, Talebizadeh, and
  Rahimzadeh]{khademinezhad2015numerical}
Taha Khademinezhad, Pouyan Talebizadeh, and Hassan Rahimzadeh.
\newblock Numerical study of unsteady flow around a square cylinder in compare
  with circular cylinder.
\newblock In \emph{Conference Paper}, 2015.

\bibitem[Lam et~al.(2012)Lam, Lin, Zou, and Liu]{lam2012numerical}
K~Lam, YF~Lin, L~Zou, and Yang Liu.
\newblock Numerical study of flow patterns and force characteristics for square
  and rectangular cylinders with wavy surfaces.
\newblock \emph{Journal of Fluids and structures}, 28:\penalty0 359--377, 2012.

\bibitem[Saha et~al.(2003)Saha, Biswas, and Muralidhar]{saha2003three}
Arun~K Saha, Gautam Biswas, and Krishnamurthy Muralidhar.
\newblock Three-dimensional study of flow past a square cylinder at low
  reynolds numbers.
\newblock \emph{International Journal of Heat and Fluid Flow}, 24\penalty0
  (1):\penalty0 54--66, 2003.

\end{thebibliography}

\end{document}